 \definecolor{skyblue}{rgb}{0.85,0.85,1}
\newtheorem{prop}{Proposition}[section]
\newtheorem{theorem}{Theorem}
\newtheorem{cor}[prop]{Corollary}
\newtheorem{lemma}[prop]{Lemma}
\DeclareMathOperator{\dv}{div}
\DeclareMathOperator{\tr}{Tr}
\DeclareMathOperator{\Mas}{Mas}
\DeclareMathOperator{\Mor}{Mor}
\newcommand{\bbR}{\mathbb{R}}
\newcommand{\cX}{\mathcal{X}}
\newcommand{\cH}{\mathcal{H}}
\newcommand{\cD}{\mathcal{D}}
\newcommand{\cF}{\mathcal{F}}
\newcommand{\Hh}{H^{1/2}(\partial \Omega) \oplus H^{-1/2}(\partial \Omega)}
\newcommand{\pO}{\partial \Omega}
\newcommand{\pOt}{\partial \Omega_t}
\newcommand{\p}{\partial}
\newcommand{\ra}{\rightarrow}
\begin{document}

\title[A Morse index theorem for elliptic operators]{A Morse index theorem for elliptic operators on bounded domains}

\author{Graham Cox}\email{ghcox@email.unc.edu}%\address{Department of Mathematics, UNC Chapel Hill, Phillips Hall CB \#3250, Chapel Hill, NC 27599}
\author{Christopher K.R.T. Jones}\email{ckrtj@email.unc.edu}%\address{Department of Mathematics, UNC Chapel Hill, Phillips Hall CB \#3250, Chapel Hill, NC 27599}
\author{Jeremy L. Marzuola}\email{marzuola@email.unc.edu}\address{Department of Mathematics, UNC Chapel Hill, Phillips Hall CB \#3250, Chapel Hill, NC 27599}

\begin{abstract}
%new version
Given a selfadjoint, elliptic operator $L$, one would like to know how the spectrum changes as the spatial domain $\Omega \subset \bbR^d$ is deformed. For a family of domains $\{\Omega_t\}_{t\in[a,b]}$ we prove that the Morse index of $L$ on $\Omega_a$ differs from the Morse index of $L$ on $\Omega_b$ by the Maslov index of a path of Lagrangian subspaces on the boundary of $\Omega$. This is particularly useful when $\Omega_a$ is a domain for which the Morse index is known, e.g. a region with very small volume. Then the Maslov index computes the difference of Morse indices for the ``original" problem (on $\Omega_b$) and the ``simplified" problem (on $\Omega_a$). This generalizes previous multi-dimensional Morse index theorems that were only available on star-shaped domains or for Dirichlet boundary conditions. We also discuss how one can compute the Maslov index using crossing forms, and present some applications to the spectral theory of Dirichlet and Neumann boundary value problems.

%
%We consider a second-order, selfadjoint elliptic operator $L$ on a family of domains $\{\Omega_t\}_{t\in[a,b]}$, %with no assumptions on the geometry of the $\Omega_t$'s. It is shown that
%and show that the Morse index of $L$ can be equated with the Maslov index of a path in a symplectic Hilbert space defined on the boundary of $\Omega_b$. This result is valid for a wide variety of boundary conditions, including Dirichlet, Neumann and Robin.
%
%Specifically, the Maslov index of the path we define relates the Morse index of $L$ on $\Omega_b$ to the Morse index of $L$ on $\Omega_a$. This is particularly useful when $\Omega_a$ is a domain for which the spectrum is more readily understood, e.g. a region with very small volume. In other words, the Maslov index exactly computes the discrepancy between the Morse indices for the ``original" problem (on $\Omega_b$) and the ``simplified" problem (on $\Omega_a$). This generalizes previous results that were only available on star-shaped domains, or for Dirichlet boundary conditions.
%
%We then discuss how one can practically compute the Maslov index using crossing forms, and present some applications to the spectral theory of Dirichlet and Neumann boundary value problems.
%%With the Morse index theorem in hand, we will also explore several monotonicity results in spectral theory, which the authors expect to have applications in stability theory related to the study of phase transitions and reaction diffusion equations in higher dimensions.  
\end{abstract}

\maketitle

\section{Introduction}
Let $L$ be a second-order, selfadjoint elliptic operator on a bounded domain $\Omega \subset \bbR^n$. The abstract spectral theory of such operators is well understood, but it is not known in general how to relate the spectrum to underlying geometric features of either the operator or the domain. For instance, if $\bar{u}$ is a steady state for the reaction-diffusion equation $u_t + f(u) = \Delta u$,
then the linear stability of $\bar{u}$ is determined by the spectrum of $L = -\Delta + f'(\bar{u})$. The operator depends explicitly on the steady state through the potential  $f'(\bar{u})$, and it would be useful if one could relate spectral properties of $L$, such as the number of negative eigenvalues, to the structure of $\bar{u}$ and $f$.

A motivating example comes from Sturm--Liouville theory for ordinary differential equations. If $\bar{u}$ is a steady state of $u_t +f(u) = u_{xx}$, then its Morse index can be found by counting the zeros of the derivative $\bar{u}_x$. In a more geometric vein, the Morse index theorem shows that the number of unstable (length decreasing) directions in which a Riemannian geodesic can be perturbed is equal to the number of conjugate points along the geodesic \cite{M63}. This relates the index to the curvature of the manifold, which affects the existence of conjugate points in a fundamental way.

A multi-dimensional Morse index theorem was proved by Smale \cite{S65} for a selfadjoint, elliptic operator $L$ on a bounded domain, with Dirichlet boundary conditions. Assuming that the domain $\Omega$ could be deformed smoothly though a family $\{\Omega_t\}$ with $\textrm{Vol}(\Omega_t) \to 0$, Smale showed that the Morse index of $L$ equals the total number of times $t$, with multiplicity, for which the problem
\begin{align*}
	Lu = 0 \textrm{ in } \Omega_t, \ \
	u = 0 \textrm{ on } \partial \Omega_t
\end{align*} 
has a nontrivial solution. These times are analogous to conjugate points in the Riemannian case, which correspond to solutions of the Jacobi equation with Dirichlet boundary conditions. An abstract generalization of this result was given by Uhlenbeck in \cite{U73}. %---and so Smale's result can be viewed as a Morse index theorem for elliptic boundary value problems.

In \cite{A85} Arnol$'$d gave a symplectic interpretation of Sturm--Liouville theory by equating the Morse index to the Maslov index---a topological invariant assigned to a path of Lagrangian subspaces in a symplectic vector space. This interpretation was extended to the multi-dimensional setting by Deng and Jones \cite{DJ11} for a Schr\"{o}dinger operator $L = -\Delta + V$ on a bounded, star-shaped domain $\Omega \subset \bbR^n$. Their idea was to contract $\Omega$ through the one-parameter family $\Omega_t := \{tx: x \in \Omega\}$, then for each $t\in(0,1]$ define a pair of Lagrangian subspaces in $\Hh$ that encode the given boundary condition and the boundary data of weak solutions to $Lu=0$ on $\Omega_t$, respectively. By construction, these subspaces intersect when there is a nonzero solution to $Lu=0$, with the prescribed boundary conditions, on $\Omega_t$. This fact was used to relate the Maslov index of the path obtained by contracting $\Omega$ to the Morse index of $L$.

In the star-shaped case the approach of Deng and Jones recovers Smale's result, but also allows one to consider more general boundary conditions. This generalization is significant because eigenvalues for a general boundary value problem can exhibit more complicated behavior, with respect to domain variations, than in the Dirichlet case. For instance, in the Neumann problem the eigenvalues are not necessarily increasing for a shrinking family of domains, as was recently observed in \cite{NW07}. 

The main shortcoming of \cite{DJ11} is the star-shaped assumption on the domain. Stability problems on general domains are of great interest, and one needs effective tools for computing the Morse index. There is also a more subtle (and important) reason for considering general domains. If $L\bar{u} = 0$, then it is desirable to relate the Morse index of $L$ to the geometric structure of $\bar{u}$, analogous to the Sturm oscillation theorem and Courant's nodal domain theorem. A relevant family of domains is given by the sublevel sets
\[
	\Omega_t = \{x \in \Omega : \bar{u}(x) < t \},
\]
which remain diffeomorphic as long as $t$ does not pass through a critical value of $\bar{u}$. There is no reason to expect the $\Omega_t$ to be star-shaped, even when $\Omega \subset \bbR^n$ is a ball and the coefficients of $L$ are radially symmetric.

In the current paper we show that, through a careful scaling of the operators and boundary conditions, it is possible to preserve the symplectic structure on the boundary as the domain is deformed, with no assumptions on the geometry of $\Omega$.  This allows us to define the Maslov index---a signed enumeration of conjugate times---and relate it to the Morse index of the boundary value problem on $\Omega$. For a family of domains $\{\Omega_t\}_{a \leq t \leq b}$, our main result is that the difference in Morse indices
\[
	\Mor(L|_{\Omega_a}) - \Mor(L|_{\Omega_b})
\]
equals the Maslov index of a path of Lagrangian subspaces in $\Hh$. We describe how to compute the relevant Maslov index in practice, and use the resulting formulas to determine Morse indices for a variety of boundary value problems.

\subsection*{Outline of the paper}
In Section \ref{sec:definitions} we make precise our assumptions on the domains, operators and boundary conditions under consideration;  the main results are stated in Section \ref{sec:results}. The path for which the Maslov index will be computed is constructed in Section \ref{sec:path}, and the main theorem is proved in Section \ref{sec:proof}. In Section \ref{sec:mono} we describe the computation of the Maslov index via crossing forms and give some applications to spectral problems with Dirichlet and Neumann boundary conditions.

Appendix \ref{forms} summarizes the relation between symmetric bilinear forms and selfadjoint, unbounded operators that lies at the heart of our presentation. A review of the Fredholm--Lagrangian Grassmannian and Maslov index for symplectic Hilbert spaces is given in Appendix \ref{app:Maslov}. In Appendix \ref{app:regular} we prove some regularity results for families of bilinear forms that are are needed in Section \ref{sec:path}.

\section*{Acknowledgments} The authors wish to thank Jian Deng, Yuri Latushkin, Alessandro Portaluri, Alim Sukhtayev, Nils Waterstraat and Kevin Zumbrun for very helpful conversations during the preparation of this manuscript. JLM was supported in part by U.S. NSF Grant DMS--1312874.  GC and CKRTJ were supported by U.S. NSF Grant DMS--1312906.

%%%%%%%%%%%%%%%%%%%%%%%%%%%%%%%%%%%%%%%%
%%%%%%%%%%%%%%%%%%%%%%%%%%%%%%%%%%%%%%%%
%%%%%%%%%%%%%%%%%%%%%%%%%%%%%%%%%%%%%%%%
%%%%%%%%%%%%%%%%%%%%%%%%%%%%%%%%%%%%%%%%
%%%%%%%%%%%%%%%%%%%%%%%%%%%%%%%%%%%%%%%%

\section{Definitions and statement of results}
\label{sec:definitions}

\subsection{The Morse index}
Throughout we assume that $\Omega \subset \mathbb{R}^n$ is a bounded domain with Lipschitz boundary. Let $L$ be a strongly elliptic operator of the form
\begin{align}
	Lu = -\partial_i (a^{ij} \partial_j u) + cu \label{eqn:L}
\end{align}
where $a^{ij}, c \in L^\infty(\Omega)$ are real-valued functions with $a^{ij} = a^{ji}$. Suppose $D$ is a Dirichlet form for $L$, i.e. a symmetric, bilinear form such that
\[
	D(u,v) = \left<Lu,v\right>_{L^2(\Omega)}
\]
for all $u,v \in C^{\infty}_0(\Omega)$. Letting $\cX$ be a closed subspace of $H^1(\Omega)$ that contains $H^1_0(\Omega)$, we say that $u \in \cX$ is an eigenfunction for the $(D,\cX)$ problem, with eigenvalue $\lambda$, if
\[
	D(u,v) = \lambda \left<u,v\right>_{L^2(\Omega)}
\]
for all $v \in \cX$. The correspondence between $D$ and $L$ is standard (see \cite{F95,K76,M00,RS72} or Appendix \ref{forms} for details). Before proceeding, we define
\begin{equation} \label{def:gamma}
	\gamma u = u|_{\partial \Omega}
\end{equation}
to be the Dirichlet trace operator, the mapping properties of which will be recalled in Lemma \ref{lemma:trprop}.

\begin{prop} \label{elliptic}
There exists an unbounded, selfadjoint operator $L_{\cX}$, with dense domain $\cD(L_{\cX}) \subset \cX$, such that
\begin{align*}
	D(u,v) = \left<L_{\cX} u,v\right>_{L^2(\Omega)}
\end{align*}
for all $u \in \cD(L_{\cX})$ and $v \in \cX$, and a first-order differential operator $B$ defined near $\pO$ such that
\[
	D(u,v) = \left<Lu,v\right>_{L^2(\Omega)} + \int_{\partial \Omega} (Bu)(\gamma v) d\mu
\]
whenever $u,v \in H^1(\Omega)$ and $Lu \in L^2(\Omega)$. Moreover, there exists an orthonormal basis for $L^2(\Omega)$ consisting of eigenfunctions $\{u_i\}$ for $L_{\cX}$, with discrete eigenvalues $\{\lambda_i\}$ tending to $\infty$.
\end{prop}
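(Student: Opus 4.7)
The plan is to invoke Kato's first representation theorem for closed, semibounded, symmetric forms. Strong ellipticity of $L$ gives a Gårding inequality: there exist $c_0>0$ and $K\in\bbR$ with
\[
D(u,u) + K\|u\|_{L^2}^2 \geq c_0 \|u\|_{H^1}^2 \quad \text{for all } u\in H^1(\Omega),
\]
so the form norm $(D(u,u) + (K+1)\|u\|_{L^2}^2)^{1/2}$ is equivalent to the $H^1$ norm on $\cX$. Since $\cX$ is closed in $H^1(\Omega)$ and dense in $L^2(\Omega)$ (as it contains $H^1_0(\Omega)$), the form $D$ is closed, symmetric and semibounded on $L^2(\Omega)$ with form domain $\cX$. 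Kato's representation theorem then delivers the unique selfadjoint operator $L_\cX$ with dense domain $\cD(L_\cX)\subset\cX$ satisfying $D(u,v)=\langle L_\cX u,v\rangle_{L^2}$ for all $u\in\cD(L_\cX)$ and $v\in\cX$.

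For the boundary identity, I would first verify it on smooth functions: for $u,v\in C^\infty(\overline{\Omega})$, integration by parts against $-\partial_i(a^{ij}\partial_j u)+cu$ yields
\[
D(u,v) = \int_\Omega (Lu)v\,dx + \int_{\pO}(Bu)(\gamma v)\,d\mu,
\]
where $B=\nu_i a^{ij}\partial_j$ is the conormal derivative, with $\nu$ the outward unit normal extended smoothly into a neighborhood of $\pO$ (so that $B$ is a genuine first-order differential operator defined near the boundary). To pass from smooth functions to $u\in H^1(\Omega)$ with $Lu\in L^2(\Omega)$ and arbitrary $v\in H^1(\Omega)$, I would interpret $Bu$ as an element of $H^{-1/2}(\pO)$ via the classical Lions--Magenes extension: the conormal-trace map extends continuously from $C^\infty(\overline{\Omega})$ to the space $\{u\in H^1(\Omega):Lu\in L^2(\Omega)\}$ equipped with its graph norm, with image in $H^{-1/2}(\pO)$, and the boundary integral then becomes the canonical $H^{-1/2}$--$H^{1/2}$ duality pairing against $\gamma v\in H^{1/2}(\pO)$. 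Density of $C^\infty(\overline{\Omega})$ in the graph-norm topology (valid for Lipschitz $\pO$) upgrades the identity to the stated generality.

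For the discrete spectrum I would use that $\cX\hookrightarrow L^2(\Omega)$ is compact by Rellich--Kondrachov (since $\Omega$ is bounded with Lipschitz boundary and $\cX\subset H^1(\Omega)$). Combined with the Gårding estimate, the resolvent $(L_\cX+K+1)^{-1}: L^2(\Omega)\to\cX\hookrightarrow L^2(\Omega)$ is bounded, positive, selfadjoint and compact, so the spectral theorem for compact selfadjoint operators produces an orthonormal eigenbasis $\{u_i\}$ of $L^2(\Omega)$ with eigenvalues $\lambda_i\to\infty$. The main technical obstacle is the rigorous formulation of $Bu$ for non-smooth $u$: when only $Lu\in L^2$ is assumed, $Bu$ is not a function but a distribution in $H^{-1/2}(\pO)$, and the justification of the generalized Green's identity at this level of regularity is exactly where the Lipschitz hypothesis on $\pO$ is used; I would invoke the standard Lions--Magenes / Grubb framework (as summarized in Appendix \ref{forms}) rather than reprove these trace results.
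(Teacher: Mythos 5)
Your proposal is correct and follows essentially the same route as the paper's Appendix~\ref{forms}: the representation theorem for closed semibounded forms (the paper cites Reed--Simon VIII.15, equivalently Kato's first representation theorem) for $L_{\cX}$, the weak Green's identity on $\{u \in H^1(\Omega): Lu \in L^2(\Omega)\}$ with $Bu \in H^{-1/2}(\pO)$ (the paper cites Theorem 4.4 of McLean), and Rellich compactness for the discrete spectrum. The only slight imprecision is that for a general Dirichlet form $D$ for $L$ (which may contain first-order terms $b^i$), the boundary operator is the full conormal $B = N_i(a^{ij}\partial_j + b^i)$ rather than just $N_i a^{ij}\partial_j$, but this does not affect the argument.
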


Without further regularity assumptions on $\pO$ and $D$, the eigenfunctions are only known to be in $H^1(\Omega)$. It is proved in Appendix \ref{forms} that
\[
	\cD(L_{\cX}) = \left\{ u \in \cX : Lu \in L^2(\Omega) \text{ and } \int_{\pO} (Bu)(\gamma v) d\mu = 0 \text{ for all } v \in \cX \right\}.
\]
The eigenvalues of $L_{\cX}$ satisfy the minimax principle (cf. Theorem XIII.2 in \cite{RS78})
\[
	\lambda_n = \sup_{\overset{V \subset L^2(\Omega)}{\dim(V) = n}} \inf \left\{ \frac{D(u,u)}{\|u\|_{L^2(\Omega)}^2} : u \in \cX \cap V^{\bot} \right\}
\]
and the Morse index of $L_{\cX}$ can be computed as
\[
	\Mor(L_{\cX}) = \sup \{ \dim(U) : U \subset \cX, D(u,u) < 0 \text{ for all } u \in U\}.
\]

The boundary operator $B$ depends on $D$ but not on $\cX$. The boundary conditions, and hence the domain of $L_{\cX}$, typically depend on both $D$ and $\cX$. To illustrate this dependence, we consider the form
\begin{align} \label{Dexample}
	D(u,v) = \int_{\Omega} [\nabla u \cdot \nabla v + V uv]
\end{align}
on the following closed subspaces of $H^1(\Omega)$
\begin{align*}
	 & \hspace{0cm}  \cX^0 = H^1_0(\Omega), \\
	  & \hspace{.5cm} \cX^1 = H^1(\Omega), \\
	 & \hspace{1cm}  \cX^2 = \left\{u \in H^1(\Omega) : \left.u\right|_{\Sigma_i} \text{ is constant for each } i \right\}, \\
	 & \hspace{1.5cm}  \cX^3 = \left\{u \in H^1(\Omega) : \int_{\Sigma_i} (\gamma u)  d\mu= 0 \text{ for each } i \right\},
\end{align*}
where $\{\Sigma_i\}$ are the connected components of $\pO$ and $d\mu$ is the induced volume form on $\pO$. Integrating by parts, we obtain $L = -\Delta + V(x)$ and
\[
	Bu = \left. \frac{\partial u}{\partial N}\right|_{\partial \Omega}.
\]
The selfadjoint operators $L_{{\cX}^0}, \ldots, L_{{\cX}^3}$ given by Proposition \ref{elliptic} have domains
%\begin{align*}
%	& \hspace{0cm}  \cD(L_{{\cX}^0}) = H^2(\Omega) \cap H^1_0(\Omega) , \\
%	&  \hspace{.5cm}  \cD(L_{{\cX}^1}) = \left\{u \in H^2(\Omega) : \left. \frac{\partial u}{\partial n}\right|_{\partial \Omega} = 0 \right\}, \\
%	& \hspace{1cm}  \cD(L_{{\cX}^2}) = \left\{u \in H^2(\Omega) : \left.u\right|_{\Sigma_i} \text{ is constant and } \int_{\Sigma_i} \frac{\partial u}{\partial N} \ d\mu= 0 \text{ for each } i \right\}  ,  \\
%	& \hspace{2cm}  \cD(L_{{\cX}^3}) = \left\{ u \in H^2(\Omega) : \int_{\Sigma_i} (\gamma u) d\mu = 0 \text{ and } \left. \frac{\partial u}{\partial N}\right|_{\partial \Sigma_i} \text{ is constant for each } i \right\}.
%\end{align*}
\begin{align*}
	& \hspace{0cm}  \cD(L_{{\cX}^0}) = \left\{u \in H^1(\Omega) : \Delta u \in L^2(\Omega) \text{ and } \left.u\right|_{\pO} = 0 \right\} , \\
	&  \hspace{.5cm}  \cD(L_{{\cX}^1}) = \left\{u \in H^1(\Omega) : \Delta u \in L^2(\Omega) \text{ and } \left. \frac{\partial u}{\partial n}\right|_{\partial \Omega} = 0 \right\}, \\
	& \hspace{1cm}  \cD(L_{{\cX}^2}) = \left\{u \in H^1(\Omega) :\Delta u \in L^2(\Omega),  \left.u\right|_{\Sigma_i} \text{ is constant and } \int_{\Sigma_i} \frac{\partial u}{\partial N} \ d\mu= 0 \text{ for each } i \right\}  ,  \\
	& \hspace{2cm}  \cD(L_{{\cX}^3}) = \left\{ u \in H^1(\Omega) : \Delta u \in L^2(\Omega), \int_{\Sigma_i} (\gamma u) d\mu = 0 \text{ and } \left. \frac{\partial u}{\partial N}\right|_{\partial \Sigma_i} \text{ is constant for each } i \right\}
\end{align*}
and satisfy $L_{\cX^i}u = Lu$ for $u \in \cD(L_{\cX^i})$. Without further assumptions on $\pO$ and $V(x)$ (cf. Theorem 4.18 of \cite{M00}) we cannot conclude that $\cD(L_{\cX^i}) \subset H^2(\Omega)$.

Note that $L_{{\cX^0}}$ and $L_{{\cX}^1}$ are the Dirichlet and Neumann Laplacian, respectively. The $\cX^2$ boundary conditions arise in the study of inviscid fluid flow on a multiply-connected domain---see Section 5 of \cite{L04}. One can also represent Robin boundary conditions through appropriate choices of $D$ and $\cX$; the reader is referred to \cite{F95} for further examples.

\subsection{Scaling of domains}\label{sec:scaling}
Now suppose $\{\Omega_t\}_{a\leq t \leq b}$ is a family of domains given by Lipschitz diffeomorphisms $\varphi_t\colon \Omega \rightarrow \Omega_t$. For each $t$ let $D\varphi_t\colon \Omega \to \bbR^{n\times n}$ denote the Jacobian of $\varphi_t$, which is contained in $L^\infty(\Omega,\bbR^{n\times n})$ as a consequence of Rademacher's theorem. We say that $\{\varphi_t\}$ is of class $C^k$ if $t \mapsto \varphi_t$ is in $C^k \left([a,b], L^\infty(\Omega,\bbR^n)\right)$ and $t \mapsto D\varphi_t$ is in $C^k \left([a,b], L^\infty(\Omega,\bbR^{n\times n})\right)$.

For instance, if $\Omega$ is star-shaped, we can define $\Omega_t = \{tx : x \in \Omega\}$ and $\varphi_t(x) = tx$ for $t \in [\epsilon,1]$. Another example comes from the gradient flow of a Morse function $f$. If $f^{-1}[a,b] \subset \bbR^n$ is compact and contains no critical points, it is easy to construct a family $\{\varphi_t\}$ such that $\varphi_t(\Omega) =  f^{-1}(-\infty,t]$ for $t \in [a,b]$.

It will be assumed that the Dirichlet form $D$ is defined on a domain in $\bbR^n$ that contains $\cup_{a \leq t \leq b} \Omega_t$. The above examples both satisfy $\Omega_{t_1} \subset \Omega_{t_2}$ for $t_1 < t_2$, in which case it suffices to have $D$ defined on $\Omega_b$. We define a family of Dirichlet forms $\{D_t\}$ on $\cX \subset H^1(\Omega)$ by
\begin{align}\label{Dtdef}
	D_t(u,v) = \left.D\right|_{\Omega_t}(u \circ \varphi_t^{-1}, v \circ \varphi_t^{-1}).
\end{align}
Each $D_t$ is symmetric and coercive, so by Proposition \ref{elliptic} there exists a family of unbounded, selfadjoint operators $\{L_{\cX,t}\}$ on $L^2(\Omega)$ such that $D_t(u,v) = \left<L_{\cX,t}u,v\right>_{L^2(\Omega)}$ for each $u \in \cD(L_{\cX,t})$ and $v \in \cX$, and operators $L_t$ and $B_t$ such that
\begin{align} \label{LtBt}
	D_t(u,v) = \left< L_t u, v \right>_{L^2(\Omega)} + \int_{\pO} (B_tu)(\gamma v) d\mu
\end{align}
whenever $u,v \in H^1(\Omega)$ and $L_t u \in L^2(\Omega)$.

Our main result, Theorem \ref{thm:main}, relates the Morse indices of $\{L_{\cX,a}\}$ and $\{L_{\cX,b}\}$. Both operators are defined on $L^2(\Omega)$. It follows from a change of variables that the $(D_t,\cX)$ eigenvalue problem is equivalent to the $( D|_{\Omega_t}, \cX_t)$ problem, where $\cX_t := \{u \circ \varphi_t^{-1} : u \in \cX\} \subset H^1(\Omega_t)$. 
%In particular, there exists $u \in \cX$ such that $D_t(u,v) = 0$ for all $v \in \cX$ if and only if there exists $u_t \in \cX_t$ such that $D|_{\Omega_t}(u_t,v_t) = 0$ for all $v_t \in \cX_t$. It follows that $u_t$ satisfies the equation $L u_t = 0$ strongly in $\Omega_t$.
To determine the boundary conditions on $\pOt$, it is necessary to identify $\cX_t$ explicitly. For the examples considered above we have
\begin{align*}
	& \hspace{0cm}  \cX^0_t = H^1_0(\Omega_t) , \\
	& \hspace{0.5cm}\cX^1_t = H^1(\Omega_t) , \\
	& \hspace{1cm}  \cX^2_t = \left\{u \in H^1(\Omega_t) : \left.u\right|_{\Sigma_{t_i}} \text{ is constant for each } i \right\} , \\
	& \hspace{1.5cm} \cX^3_t = \left\{u \in H^1(\Omega_t) : \int_{\Sigma_{t_i}} (\gamma_t u) (\varphi_t^{-1})^*d\mu= 0 \text{ for each } i \right\},
\end{align*}
where $\gamma_t$ denotes the Dirichlet trace on $\Omega_t$. In the first three cases $\cX^j_t$ depends on $\Omega_t$, but not the particular diffeomorphism $\varphi_t\colon \Omega \rightarrow \Omega_t$. On the other hand,  $\cX^3_t$ is not, in general, equal to the space
\[
	\left\{u \in H^1(\Omega_t) : \int_{\Sigma_{t_i}} (\gamma_t u) d\mu_t= 0 \text{ for each } i \right\},
\]
because the pulled-back volume form $(\varphi_t^{-1})^*d\mu$ on $\pOt$ does not necessarily agree with the induced form $d\mu_t$. Therefore the interpretation of a conjugate time---a value of $t$ for which the $( D|_{\Omega_t}, \cX^3_t)$ problem has a nontrivial kernel---depends on the diffeomorphisms $\{\varphi_t\}$ and not just the family of domains $\{\Omega_t\}$.

One can always modify $\{\varphi_t\}$ to obtain a new family $\{\widehat{\varphi}_t\}$ such that $\widehat{\varphi}_t(\pO) = \varphi_t(\pO)$ for all $t$, and
\[
	\widehat{\cX}^3_t = \left\{u \in H^1(\Omega_t) : \int_{\Sigma_{ti}} (\gamma_t u) d\mu_t= 0 \text{ for each } i \right\},
\]
but we will not explore this issue any further in the current paper.

\subsection{A symplectic Hilbert space}
We define
\[
	\cH = \Hh.
\]
In Appendix \ref{app:Maslov} it is shown that $\cH$ has the structure of a symplectic Hilbert space. Through a minor abuse of notation, we will denote the dual pairing between $H^{1/2}(\pO)$ and $H^{1/2}(\pO)^* \cong H^{-1/2}(\pO)$ by the integral notation
\[
	{}_{H^{1/2}(\pO)}\left< f, g\right>_{H^{-1/2}(\pO)} = \int_{\pO} fg \,d\mu
\]
for $f \in H^{1/2}(\pO)$ and $g \in H^{-1/2}(\pO)$.

We now construct two families of Lagrangian subspaces of $\cH$, corresponding to the rescaled differential operators and boundary conditions, respectively. The space of weak solutions to $L_t u = \lambda u$, in the absence of boundary conditions, is denoted by
\begin{align} \label{Klt}
	K_{\lambda,t} = \left\{u \in H^1(\Omega) : D_t(u,v) = \lambda \left<u,v\right>_{L^2(\Omega)} \text{ for all } v \in H^1_0(\Omega) \right\}
\end{align}
for $(\lambda,t) \in \bbR \times [a,b]$. We define a trace map $\tr_t\colon C^1(\overline{\Omega}) \rightarrow C^0(\pO) \times C^0(\pO)$ by
\begin{align} \label{def:tr}
	\tr_t u = \left( \gamma u, B_t u\right),
\end{align}
where $\gamma$ is the Dirichlet trace operator from \eqref{def:gamma} and $B_t$ is the rescaled boundary operator from \eqref{LtBt}. It is observed in Lemma \ref{lemma:trprop} that $\tr_t$ extends to a bounded operator on $K_{\lambda,t}$, so we can define
\begin{align} \label{mudef}
	\mu(\lambda,t) = \tr_t(K_{\lambda,t}).
\end{align}
We also define the space of admissible boundary values by
\begin{align} \label{boundary}
	\nu = \left\{(f,g) \in \cH : f \in \gamma(\cX), \int_{\pO} g (\gamma v) d\mu = 0 \text{ for all } v \in \cX \right\}.
\end{align}

Again referring to the four examples above, we have
\begin{align*}
	& \hspace{0cm}  \nu^0 = \{0\} \oplus H^{-1/2}(\pO) , \\
	& \hspace{0.5cm} \nu^1 = H^{1/2}(\pO) \oplus \{0\} , \\
	& \hspace{1cm}  \nu^2 = \left\{ (f,g) \in \cH : \left.f\right|_{\Sigma_i} \text{ is constant and } \int_{\Sigma_i} g \ d\mu= 0 \text{ for each } i \right\} , \\
	& \hspace{1.5cm}  \nu^3 = \left\{ (f,g) \in \cH :  \int_{\Sigma_i} f \ d\mu= 0 \text{ and } \left.g \right|_{\Sigma_i} \text{ is constant for each } i \right\}.
\end{align*}

\subsection{Conjugate times}
The spaces $\mu(\lambda,t)$ and $\nu$ are defined so a nontrivial intersection corresponds to an eigenvalue of $L_{\cX,t}$, as we prove in Section \ref{ss:int}.

\begin{prop} \label{intersection}
The intersection $\mu(\lambda,t) \cap \nu$ is nontrivial if and only if there is a nonzero function $u \in \cD(L_{\cX,t})$ with $L_{\cX,t} u = \lambda u$. Moreover,
\[
	\dim \left[ \mu(\lambda,t) \cap \nu \right] = \dim \ker \left( L_{\cX,t} - \lambda \right).
\]
\end{prop}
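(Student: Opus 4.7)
The plan is to show that the trace map $\tr_t$ restricts to a linear bijection
\[
\Phi \colon \ker(L_{\cX,t} - \lambda) \;\longrightarrow\; \mu(\lambda,t) \cap \nu,
\]
from which both the equivalence and the dimension equality follow at once. Before defining $\Phi$ I would first confirm the useful characterization that $u \in K_{\lambda,t}$ if and only if $u \in H^1(\Omega)$ satisfies $L_t u = \lambda u$ in $L^2(\Omega)$; the forward direction tests the defining identity against $v \in H^1_0(\Omega)$ to obtain $L_t u = \lambda u$ as a distribution, hence in $L^2$, while the converse uses \eqref{LtBt} together with $\gamma v = 0$ on $H^1_0(\Omega)$.

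To see $\Phi$ is well-defined and surjective, I would first take $u \in \cD(L_{\cX,t})$ with $L_{\cX,t}u = \lambda u$. Then $u \in \cX \subset H^1(\Omega)$ and $L_t u = \lambda u \in L^2$, so $u \in K_{\lambda,t}$ and hence $\tr_t u \in \mu(\lambda,t)$. The defining vanishing condition on $\cD(L_{\cX,t})$ together with the tautological $\gamma u \in \gamma(\cX)$ place $\tr_t u \in \nu$. Conversely, given $(f,g) \in \mu(\lambda,t) \cap \nu$, I would pick any $u \in K_{\lambda,t}$ with $\tr_t u = (f,g)$. Since $f = \gamma u \in \gamma(\cX)$, there exists $w \in \cX$ with $\gamma w = f$; then $u - w \in H^1_0(\Omega) \subset \cX$, forcing $u \in \cX$. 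The condition $(f,g) \in \nu$ now reads $\int_{\pO}(B_t u)(\gamma v)\, d\mu = 0$ for all $v \in \cX$, which, combined with $L_t u = \lambda u \in L^2$, places $u$ in $\cD(L_{\cX,t})$ with $L_{\cX,t} u = \lambda u$. If $(f,g) \neq 0$ then the preimage $u$ is nonzero, which yields the forward direction of the equivalence.

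The converse direction of the equivalence, as well as the dimension equality, reduces to injectivity of $\Phi$. Suppose $u \in \ker(L_{\cX,t} - \lambda)$ with $\Phi(u) = \tr_t u = 0$; then $u \in H^1(\Omega)$ is a weak solution of the strongly elliptic equation $L_t u = \lambda u$ with vanishing Cauchy data $\gamma u = 0$ and $B_t u = 0$. The unique continuation principle then forces $u \equiv 0$, so $\Phi$ is injective and hence a linear isomorphism. The main obstacle I anticipate is precisely this appeal to unique continuation from the boundary: the operator is in divergence form with merely $L^\infty$ coefficients, and standard UCP results typically require somewhat more regularity (for instance Lipschitz continuous $a^{ij}$). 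One either imposes this additional regularity in the hypotheses or extracts the vanishing directly from the weak formulation on the Lipschitz domain; apart from this analytic input, the proof is essentially bookkeeping among the several equivalent descriptions of the eigenspace.
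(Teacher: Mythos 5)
Your proposal is correct and follows essentially the same route as the paper: both directions of the equivalence are the same bookkeeping between $\cD(L_{\cX,t})$, $K_{\lambda,t}$, and the definition of $\nu$ (your $u-w\in H^1_0(\Omega)\subset\cX$ step is exactly the paper's Lemma \ref{traceX}), and the dimension count rests on injectivity of $\tr_t$ on $K_{\lambda,t}$, which the paper establishes as the quantitative estimate of Lemma \ref{Carleman} via the same unique continuation input (Proposition 2.5 of \cite{BR12}) that you invoke. Your concern about UCP for merely $L^\infty$ leading coefficients is legitimate but is resolved in the paper exactly as you suggest, by the continuity/regularity hypotheses imposed in Theorem \ref{thm:main} together with the cited unique continuation result.
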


We say that $t_* \in [a,b]$ is a \emph{conjugate time} if $\mu(0,t_*) \cap \nu \neq \{0\}$. Thus $t_*$ is a conjugate time if and only if $L_{\cX,t_*}$ has a nontrivial kernel, which is true if and only if
\[
	\ker D_{t_*} := \{u \in \cX : D_{t_*}(u,v) = 0 \text{ for all } v \in \cX \}
\]
is nontrivial. By a change of coordinates we see that $\ker D_{t_*}$ is isomorphic to
\[
	\ker \left.D\right|_{\Omega_{t_*}} := \{u \in \cX_{t_*} : \left.D\right|_{\Omega_{t_*}}(u,v) = 0 \text{ for all } v \in \cX_{t_*} \}.
\]

For our example \eqref{Dexample}, $t_*\in [a,b]$ is a conjugate time for the $\cX^0$ (Dirichlet) problem if there exists $u \in H^1(\Omega_{t_*})$ such that
\begin{align*}
	-\Delta u + V(x) u = 0, \ \left. u \right|_{\partial \Omega_{t_*}} = 0,
\end{align*}
and is a conjugate time for the $\cX^1$ (Neumann) problem if there exists $u \in H^1(\Omega_{t_*})$ such that
\begin{align*}
	-\Delta u + V(x) u = 0, \ \left. \frac{\partial u}{\partial N_{t_*}} \right|_{\partial \Omega_{t_*}} = 0.
\end{align*}

Analogous to \eqref{LtBt}, there is an operator $\widehat{B}_t$ such that
\begin{align} \label{Bhatdef}
	\left.D\right|_{\Omega_{t}}(u,v) = \left< Lu, v \right>_{L^2(\Omega_t)} + \int_{\pOt} (\widehat{B}_t u)(\gamma v) d\mu
\end{align}
whenever $u,v \in H^1(\Omega_t)$ and $Lu \in L^2(\Omega_t)$. In the example above, $\widehat{B}_t = \partial / \partial N_t$ on $\pOt$, whereas the rescaled boundary operator $B_t$ on $\pO$ is given by a more complicated expression involving the Jacobian of $\varphi_t$.

\subsection{Statement of results} \label{sec:results}
By construction, $\{\mu(\lambda,t)\}$ is a smooth family of Lagrangian subspaces and has a well-defined Maslov index with respect to $\nu$. Our main result is the following.

\begin{theorem} \label{thm:main}
Let $\Omega \subset \bbR^n$ be a bounded domain with Lipschitz boundary, and $\varphi_t\colon \Omega \ra \Omega_t$ a $C^0$ family of Lipschitz diffeomorphisms for $t \in [a,b]$ (as defined in Section \ref{sec:scaling}). Suppose $D$ is a strongly elliptic Dirichlet form with continuous coefficients, and $\cX \subset H^1(\Omega)$ is a closed subspace that contains $H^1_0(\Omega)$. With $L_{\cX,t}$, $\mu(\lambda,t)$ and $\nu$ defined as above, the Maslov index of $\mu(\lambda,t)$ with respect to $\nu$ satisfies
%\begin{align} \label{eqn:MM}
%	\Mas(\left. \mu(0,t) \right|_{a \leq t \leq b}; \nu) = \Mor(L_{\cX,a}) - \Mor(L_{\cX,b}).
%\end{align}
\begin{align} \label{eqn:MM}
	\Mas( \mu(0,t) ; \nu) = \Mor(L_{\cX,a}) - \Mor(L_{\cX,b}).
\end{align}
\end{theorem}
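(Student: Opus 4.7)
The standard strategy for Morse--Maslov theorems is to exploit homotopy invariance on a rectangle in the two-parameter space $(\lambda,t)\in\bbR\times[a,b]$. My plan is to introduce the two-parameter family of Lagrangian subspaces $\{\mu(\lambda,t)\}$ defined in \eqref{mudef}, consider a large rectangle $R = [-\Lambda, 0]\times[a,b]$ in parameter space, and observe that the boundary $\partial R$ traversed counterclockwise is contractible in the Fredholm--Lagrangian Grassmannian. By the homotopy invariance of the Maslov index (reviewed in Appendix \ref{app:Maslov}), the signed sum of the Maslov indices along the four sides vanishes. The target quantity $\Mas(\mu(0,t);\nu)$ then arises as a linear combination of the indices along the other three sides, which I will identify with Morse indices and show vanishes for the remaining side.

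\textbf{The four sides.} First, I would pick $\Lambda > 0$ large enough that $D_t + \Lambda$ is coercive uniformly in $t \in [a,b]$; this is possible because the lowest eigenvalue of $L_{\cX,t}$ depends continuously on $t$ on the compact interval $[a,b]$ (and the coefficients of $D$ are continuous, so the rescaled forms $D_t$ vary continuously with $t$ in the sense needed). Then by Proposition \ref{intersection}, the intersection $\mu(-\Lambda,t)\cap \nu$ is trivial for all $t$, so the bottom edge contributes $0$. For the two vertical edges, at fixed $t$ the intersections of $\mu(\lambda,t)$ with $\nu$ correspond to eigenvalues $\lambda \in [-\Lambda, 0]$ of $L_{\cX,t}$, again by Proposition \ref{intersection}. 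Using the computation of the Maslov index via crossing forms (the subject of Section \ref{sec:mono}), one checks that the crossing form at each such $\lambda$ is \emph{definite} --- it is essentially $-\|u\|_{L^2}^2$ on the eigenspace, reflecting the monotonicity $\partial_\lambda(D_t - \lambda) = -\langle \cdot, \cdot \rangle_{L^2}$. Consequently every crossing along a vertical edge contributes with the same sign, and the total Maslov index along each vertical edge equals $\pm\Mor(L_{\cX,t})$ (with the sign determined by whether one counts negative eigenvalues on $(-\Lambda,0)$ or $(-\Lambda, 0]$, which requires a small perturbation to avoid the corner at $\lambda=0$).

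\textbf{Assembling the identity.} With the rectangle oriented counterclockwise, the top edge is traversed from right to left, contributing $-\Mas(\mu(0,t);\nu)$, while the left edge is traversed downward, contributing $-\Mor(L_{\cX,a})$; the right edge contributes $+\Mor(L_{\cX,b})$, and the bottom edge contributes $0$. Setting the signed sum equal to zero and solving yields
\begin{equation*}
\Mas(\mu(0,t);\nu) = \Mor(L_{\cX,a}) - \Mor(L_{\cX,b}),
\end{equation*}
which is the desired identity. A small technical step is to handle possible zero eigenvalues at the corners $(0,a)$ and $(0,b)$: one perturbs the rectangle to $[-\Lambda, -\epsilon]\times[a,b]$, applies the argument, and then takes $\epsilon\to 0^+$, absorbing any boundary crossings into the Morse index count via a careful convention for crossings at endpoints (as set up in Appendix \ref{app:Maslov}).

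\textbf{Main obstacle.} The chief difficulty is not the abstract homotopy-invariance argument, which is formal, but rather establishing that the family $\{\mu(\lambda,t)\}$ is a genuinely continuous (indeed sufficiently regular) family of Lagrangian subspaces in the Fredholm--Lagrangian Grassmannian with respect to $\nu$. This hinges on the $t$- and $\lambda$-regularity of the weak solution spaces $K_{\lambda,t}$ and the rescaled trace operator $\tr_t$ under only Lipschitz regularity of $\varphi_t$ and $\pO$. I would rely on the detailed construction of this path in Section \ref{sec:path} (and the regularity results of Appendix \ref{app:regular}), together with the computation of crossing forms in Section \ref{sec:mono} to pin down the signs and justify the monotonicity of vertical crossings. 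Once those inputs are in hand, the proof reduces to the four-sided bookkeeping described above.
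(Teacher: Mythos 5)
Your proposal is correct and follows essentially the same route as the paper: homotopy invariance on the rectangle $[\lambda_0,0]\times[a,b]$, a uniform lower bound on the eigenvalues of $L_{\cX,t}$ to kill the bottom edge (Lemma \ref{lemma:semibound}), and the negative-definite crossing form $Q(x,x)=-\|u\|_{L^2(\Omega)}^2$ in $\lambda$ to identify each vertical edge with $-\Mor(L_{\cX,t_0})$ (Lemma \ref{lemma:morse}). The endpoint issue at $\lambda=0$ that you handle by shrinking the rectangle is dealt with in the paper directly by the endpoint convention for negative-definite paths, which counts crossings over $[\lambda_0,0)$ and so matches the strict negativity in the definition of the Morse index.
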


The Maslov index gives a signed count of the conjugate times in $[a,b]$, and it is natural to ask when the difference in Morse indices is in fact equal to the number of conjugate times. This requires monotonicity of the Maslov index, in the sense that all intersections of $\mu(0,t)$ and $\nu$ have the same orientation. This is easily shown for Dirichlet problem when the domains and operators are sufficiently regular.

\begin{cor} \label{cor:Dir}
Additionally assume that the family $\{\varphi_t\}$ is $C^1$, each $\pOt$ is of class $C^{1,1}$, and the coefficients of $D$ are continuous differentiable. Let $\cX = H^1_0(\Omega)$, so that $L_{\cX,t} = L_{D,t}$ is the Dirichlet realization of $L$. If $\Omega_{t_1} \subset \Omega_{t_2}$ for $t_1 < t_2$, then the number of conjugate times in $[a,b]$ is finite and
\begin{align}
	\Mor(L_{D,b}) = \Mor(L_{D,a}) + \sum_{t \in [a,b)} \dim \ker D_t.
\end{align}
\end{cor}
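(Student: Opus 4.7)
The plan is to invoke Theorem \ref{thm:main} to rewrite the sum of kernel dimensions as (minus) a Maslov index, and then use the crossing form formalism from Section \ref{sec:mono} to show that every crossing contributes with the same sign. Specifically, by Theorem \ref{thm:main} we have
\[
\Mor(L_{D,b}) - \Mor(L_{D,a}) = -\Mas(\mu(0,t); \nu),
\]
so it suffices to prove that every conjugate time $t_* \in [a,b)$ is a regular crossing at which the crossing form $\Gamma$ is negative definite, and that there are only finitely many such times. By Proposition \ref{intersection}, negative definiteness yields $\sign \Gamma(t_*) = -\dim(\mu(0,t_*) \cap \nu) = -\dim\ker D_{t_*}$, giving the desired identity.

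The sign of the crossing form will be computed via a Hadamard-type variation formula. Fix a conjugate time $t_*$ and let $u$ be a nontrivial solution of $L_{t_*} u = 0$ with $u|_{\pOt_*} = 0$. Under the additional regularity hypotheses ($\pOt_*$ of class $C^{1,1}$ and continuously differentiable coefficients), elliptic regularity places $u \in H^2(\Omega_{t_*})$, so $\partial u / \partial N_{t_*}$ is well-defined on $\pOt_*$. Transporting the eigenvalue problem back to $\Omega$ via $\varphi_t$ and differentiating in $t$, one obtains
\[
\dot\lambda(t_*) \, \|u\|_{L^2}^2 = -\int_{\pOt_*}\!\left(\frac{\partial u}{\partial N_{t_*}}\right)^{\!2} (X_{t_*}\cdot N_{t_*}) \, d\mu,
\]
where $X_{t_*} = (\partial_t \varphi_t)\circ \varphi_{t_*}^{-1}\big|_{t=t_*}$ is the boundary velocity and $\lambda(t)$ is the eigenvalue branch of $L_{D,t}$ passing through $0$ at $t_*$. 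The monotone inclusion $\Omega_{t_1} \subset \Omega_{t_2}$ forces $X_{t_*} \cdot N_{t_*} \geq 0$, and Hopf's lemma ensures $\partial u / \partial N_{t_*} \ne 0$ on a set of positive measure, so $\dot\lambda(t_*) < 0$. A direct computation identifies the crossing form $\Gamma$ on $\ker(L_{D,t_*})$ with this eigenvalue derivative (up to a positive scalar), so $\Gamma$ is negative definite and the crossing is regular.

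Finiteness of the conjugate times now follows from the monotonicity: each Dirichlet eigenvalue $\lambda_k(t)$ of $L_{D,t}$ is strictly decreasing in $t$ at any zero crossing, so $\lambda_k(\cdot)$ vanishes at most once in $[a,b]$; since only eigenvalues with $\lambda_k(a) \geq 0 \geq \lambda_k(b)$ contribute a crossing, and there are only finitely many of these (by compactness of the resolvent), the total count is finite. Summing the contributions $\sign \Gamma(t_*) = -\dim \ker D_{t_*}$ over all crossings yields
\[
\Mas(\mu(0,t);\nu) = -\sum_{t \in [a,b)} \dim \ker D_t,
\]
and combining with Theorem \ref{thm:main} gives the stated formula.

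The principal obstacle is the rigorous derivation of the Hadamard formula and the identification of the crossing form with $\dot\lambda(t_*)$ in the present variational setting; the regularity assumptions in the corollary are imposed precisely to make this differentiation legitimate and to apply Hopf's lemma. The remaining monotonicity and finiteness arguments are then straightforward consequences of standard Dirichlet eigenvalue theory.
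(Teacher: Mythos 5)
Your proposal follows essentially the same route as the paper: reduce via Theorem \ref{thm:main} to showing that every crossing of $\mu(0,\cdot)$ with $\nu$ is negative definite, and establish this with a Hadamard-type domain-variation formula that produces the same boundary integral $-\int_{\pOt}\bigl(\partial \widehat{u}/\partial N_t\bigr)^2 (X\cdot N_t)\,d\mu_t$ obtained in Section \ref{sec:mono}. The one substantive difference is that you route the sign computation through a differentiable eigenvalue branch $\lambda(t)$ and its derivative $\dot\lambda(t_*)$. This is exactly what the paper's crossing-form formalism is designed to avoid: when $\dim\ker D_{t_*}=k>1$ and the family is only $C^1$ in $t$, individual eigenvalue branches need not be differentiable and ``the branch passing through $0$'' is not well defined, whereas the crossing form $Q$ is a quadratic form on the whole $k$-dimensional space $\mu(0,t_*)\cap\nu$ whose signature gives the local Maslov contribution regardless of how the branches split (Proposition \ref{prop:crossing} and Appendix \ref{app:Maslov}). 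Since your Hadamard formula in fact holds for every $u\in\ker L_{D,t_*}$, the substance is recoverable, but the conclusion should be stated as negative definiteness of $Q$ on the full intersection rather than as $\dot\lambda(t_*)<0$. Two smaller points: the inclusion $\Omega_{t_1}\subset\Omega_{t_2}$ only yields $X\cdot N_t\ge 0$, and Hopf's lemma controls where $\partial\widehat{u}/\partial N_t$ is nonzero, not where $X\cdot N_t$ is positive, so strict negativity requires these two sets to overlap (the paper is equally terse here, conditioning on $X\cdot N_t>0$); and the appearance of the half-open interval $[a,b)$ in the sum is the endpoint convention for negative-definite paths stated in Appendix \ref{app:Maslov}, which your write-up uses without justification.
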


This is precisely the index theorem proved by Smale in \cite{S65}. A symplectic interpretation was given by Swanson in \cite{S78II}; our method differs in its ability to handle more general boundary conditions.
Note that the sum includes $t=a$ but not $t=b$, so it is not relevant if $b$ is a conjugate time. Intuitively, this is because the Dirichlet spectrum is strictly decreasing with respect to $t$, so an eigenvalue that equals zero at $t=b$ is positive for $t < b$ and hence does not contribute to the Morse index. For general boundary conditions an intersection at $t=b$ can only contribute nonpositively to the Morse index.

While such monotonicity cannot always be expected, one use crossing forms (defined in Appendix \ref{app:Maslov}) to determine the direction of intersection between $\mu$ and $\nu$ and hence find the contribution to the Morse index from each conjugate time. A conjugate time corresponds to a zero eigenvalue for $D_t$, with multiplicity $\dim \ker D_t$; the crossing form determines how many of these eigenvalues are increasing, and how many are decreasing, with respect to $t$. For related results on the motion of simple eigenvalues see \cite{BW2014,GS53,H05} and references therein.

In the star-shaped case, where $\Omega_t := \{tx : x \in \Omega\}$ for $t \in (0,1]$, the rescaled Dirichlet form $D_t$ can be computed easily, and one obtains more explicit expressions for the crossing form than are generally available. In particular, it is possible to deduce monotonicity results for the \emph{Neumann Laplacian} $-\Delta_N$, which we define to be the unbounded, selfadjoint operator corresponding to the Dirichlet form $D(u,v) = \int_{\Omega} \nabla u \cdot \nabla v$ with domain $\cX = H^1(\Omega)$, and similarly for the rescaled operators $-\Delta_{N,t}$ on $\Omega_t$. %For ease of exposition we assume here (as in the rest of the paper) that $V(x)$ is smooth on $\overline{\Omega}$.

\begin{cor} \label{cor:Neumannpotential}
Let $\Omega \subset \bbR^n$ be a star-shaped domain with $C^{1,1}$ boundary. Suppose $V \in C^1(\overline{\Omega})$ and $\lambda$ is an eigenvalue of multiplicity $k$ for $L_{N,t} := -\Delta_{N,t}+V(x)$ for some $t \in (0,1)$. If
\begin{align} \label{biglambda}
	\lambda > V(x) + \frac{1}{2} x \cdot \nabla V(x)
\end{align}
for all $x \in \Omega_t$, then
\[
	\Mor(L_{N,t+\delta}-\lambda) = \Mor(L_{N,t-\delta}-\lambda) + k
\]
for $\delta > 0$ sufficiently small.
\end{cor}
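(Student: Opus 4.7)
The plan is to apply Theorem~\ref{thm:main} to the spectrally shifted family obtained by pulling back $(-\Delta + V - \lambda)$ from $\Omega_s$ to $\Omega$. Setting $w = u \circ \varphi_s^{-1}$, the identity $\langle w,w\rangle_{L^2(\Omega_s)} = s^n \langle u, u\rangle_{L^2(\Omega)}$ shows that on $\cX = H^1(\Omega)$ the correct rescaled Dirichlet form is $\tilde D_s(u,v) := D_s(u,v) - \lambda s^n \langle u,v\rangle_{L^2(\Omega)}$, and that the selfadjoint operator attached to $\tilde D_s$ has Morse index equal to the physical count $\Mor(L_{N,s}-\lambda)$. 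Shrinking $\delta$ so that $s=t$ is the only conjugate time of $\{\tilde D_s\}$ in $[t-\delta,t+\delta]$, Theorem~\ref{thm:main} combined with the crossing form description in Section~\ref{sec:mono} reduces the corollary to showing that the crossing form of $\tilde D_s$ at $s=t$ is negative definite on $\ker \tilde D_t$.

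Next I would compute that crossing form explicitly. For the linear scaling $\varphi_s(\xi) = s\xi$, a direct change of variables gives
\[
D_s(u,u) = s^{n-2}\int_\Omega |\nabla u|^2 \,d\xi + s^n \int_\Omega V(s\xi)\, u(\xi)^2 \,d\xi.
\]
Differentiating in $s$ and changing variables $x = s\xi$, $w(x) = u(x/s)$, yields
\[
\dot D_t(u,u) = \frac{1}{t}\left[(n-2)\int_{\Omega_t}|\nabla w|^2 + n\int_{\Omega_t} V w^2 + \int_{\Omega_t} (x\cdot\nabla V)w^2\right],
\]
while differentiating the shift $-\lambda s^n\langle u,u\rangle_{L^2(\Omega)}$ contributes $-n\lambda t^{-1}\int_{\Omega_t} w^2$. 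On $\ker \tilde D_t$ the pulled-back eigenfunction $w$ satisfies the Neumann identity $\int_{\Omega_t}|\nabla w|^2 + Vw^2 = \lambda\int_{\Omega_t} w^2$, which lets one eliminate the gradient term. A Pohozaev-type cancellation then removes all multiples of $\lambda\int w^2$ except for one, leaving
\[
\dot{\tilde D}_t(u,u) = \frac{2}{t}\int_{\Omega_t}\Bigl(V(x) + \tfrac{1}{2}\, x\cdot\nabla V(x) - \lambda\Bigr)\, w(x)^2\, dx.
\]

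Under hypothesis \eqref{biglambda} the integrand is pointwise negative, so $\dot{\tilde D}_t$ is negative definite on the $k$-dimensional kernel $\ker \tilde D_t$. By the crossing form characterization of the Maslov index (Section~\ref{sec:mono} and Appendix~\ref{app:Maslov}), a negative definite crossing at an interior time $s=t$ contributes $-k$ to $\Mas(\tilde\mu(0,\cdot);\nu)$. Combining this with \eqref{eqn:MM} applied to $\tilde D_s$ gives $\Mor(L_{N,t-\delta}-\lambda) - \Mor(L_{N,t+\delta}-\lambda) = -k$, which rearranges to the stated identity.

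The main obstacle is keeping the physical and rescaled viewpoints straight: because the spectral shift $\lambda s^n$ depends on $s$, one must verify both that this is precisely the shift which makes the rescaled Morse index count physical eigenvalues of $-\Delta_{N,s}+V$ below $\lambda$, and that its $s$-derivative contributes the $-n\lambda t^{-1}\int w^2$ correction which produces the pointwise Pohozaev quantity $V + \tfrac12 x\cdot\nabla V - \lambda$. Once this bookkeeping is in place, the cancellation is a short calculation and the rest of the argument is a direct appeal to the general theory developed in the paper.
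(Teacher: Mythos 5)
Your proposal is correct and follows essentially the same route as the paper: absorb the spectral shift into the potential (equivalently subtract $\lambda s^n\langle\cdot,\cdot\rangle_{L^2(\Omega)}$ from $D_s$), identify the crossing form at the isolated crossing $s=t$ with $\dot{\tilde D}_t(u_t,u_t)$ via the formula of Section \ref{sec:mono}, reduce it to $\frac{2}{t}\int_{\Omega_t}\bigl(V+\tfrac{1}{2}\,x\cdot\nabla V-\lambda\bigr)w^2$, and conclude by negative definiteness together with Theorem \ref{thm:main}. The only cosmetic difference is that you change variables back to $\Omega_t$ and use the Neumann eigenvalue identity to eliminate the gradient term, whereas the paper works on the fixed domain $\Omega$ and discards the $(n-2)$-term as a vanishing boundary integral --- the same identity in different coordinates.
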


In other words, as the domain expands from $\Omega_{t - \delta}$ to $\Omega_{t + \delta}$, the number of Neumann eigenvalues below $\lambda$ increases by $k$, assuming $\lambda$ is sufficiently large. Setting $V = 0$ we find that any positive eigenvalue of the Neumann Laplacian satisfies
\begin{align}
	\Mor(-\Delta_{N,t+\delta}-\lambda) = \Mor(-\Delta_{N,t-\delta}-\lambda) + k,
\end{align}
under the hypotheses of Corollary \ref{cor:Neumannpotential}. While seemingly elementary, this result is actually rather subtle, because the monotonicity of the eigenvalues (or Morse index) for the Neumann Laplacian is known to fail for domains that are not star-shaped, even in the radially symmetric case. For instance, it was shown in \cite{NW07} that the first nonzero Neumann eigenvalue on the annulus
\[
	A_{r,R} := \{x \in \bbR^n : r \leq |x| \leq R\}
\]
is decreasing with respect to both $r$ and $R$. This differs from the first Dirichlet eigenvalue, which is decreasing in $R$ but increasing in $r$.

%The result of Corollary \ref{cor:Neumann} does not immediately generalize to the Neumann problem for a Schr\"odinger operator $-\Delta_N + V$. However, we find that a similarly monotonicity property holds for sufficiently large eigenvalues. 
%Thus the conclusion of Corollary \ref{cor:Neumann} holds if $\lambda$ additionally satisfies \eqref{biglambda}. 

By a unique continuation argument it suffices to have
\[
	\lambda \geq V(x) + \frac{1}{2} x \cdot \nabla V(x)
\]
for all $x$, with strict inequality on a nonempty, open subset of $\Omega_t$. Since $V$ and $\nabla V$ are bounded on $\Omega$, there are only a finite number of eigenvalues for which this condition could fail. If the potential is radial, $V(x) = f(|x|)$, this is equivalent to
\[
	\lambda \geq f(r) + \frac{r }{2} f'(r) %= \frac{1}{2r} \frac{d}{dr} \left[ r^2 f(r) \right]
\]
for $r \leq t$.
%In particular, if $f'(r) \leq 0$, then \eqref{biglambda} is satisfied for any $\lambda > f(0)$. (As above, it suffices to have $\lambda \geq f(0)$, unless $f(r)$ is constant.)

As a final example, suppose the potential satisfies 
\[
	0 > V(x) + \frac{1}{2} x \cdot \nabla V(x)
\]
for all $x \in \Omega$ (which in particular implies $V(0) < 0$). Then the Morse index of  $-\Delta_N + V(x)$ can be related to the number of conjugate times $t \in (0,1)$, as in Corollary \ref{cor:Dir}. Letting $c(t)$ denote the dimension of the solution space of
\[
	-\Delta u + V(x) u \text{ in } \Omega_t, \ \ \frac{\partial u}{\partial N_t} = 0 \text{ on } \pOt
\]
for each $t \in (0,1)$, we have
\begin{align}
	\Mor(-\Delta_N + V) = \sum_{t \in (0,1)} c(t) + 1.
\end{align}

%%%%%%%%%%%%%%%%%%%%%%%%%%%%%%%%%%%%%%%%
%%%%%%%%%%%%%%%%%%%%%%%%%%%%%%%%%%%%%%%%
%%%%%%%%%%%%%%%%%%%%%%%%%%%%%%%%%%%%%%%%
%%%%%%%%%%%%%%%%%%%%%%%%%%%%%%%%%%%%%%%%
%%%%%%%%%%%%%%%%%%%%%%%%%%%%%%%%%%%%%%%%

\section{Construction of the symplectic path}
\label{sec:path}
We now give in detail the construction of the subspaces $\mu(\lambda,t)$ and $\nu$ outlined in Section \ref{sec:definitions}. Throughout we consider the symplectic Hilbert space $\cH := \Hh$ with symplectic form $\omega$ defined by
\begin{align}
	\omega \left( (f_1,g_1), (f_2,g_2) \right) = \int_{\pO} (f_1 g_2 - f_2 g_1) d\mu,
\end{align}
where $d\mu$ denotes the induced area form on $\pO$. We denote by $J\colon \cH \ra \cH$ the almost complex structure on $\cH$, given by
\begin{equation}
\label{eqn:Jdef}	
J(f,g) = \left(R^{-1}g, -Rf \right)
\end{equation}
for $(f,g) \in \cH$, where $R\colon H^{1/2}(\pO) \ra H^{-1/2}(\pO) \cong H^{1/2}(\pO)^*$ is the Riesz duality isomorphism.

The main definitions and properties of symplectic Hilbert spaces are given in Appendix \ref{app:Maslov}; for now we simply recall that $\Lambda(\cH)$ denotes the Lagrangian Grassmannian of $\cH$ and $\cF \Lambda_{\nu}(\cH)$ denotes the Fredholm--Lagrangian Grassmannian with respect to a fixed Lagrangian subspace $\nu \in \Lambda(\cH)$. The following proposition, the main result of this section, summarizes the properties of $\mu(\lambda,t)$ and $\nu$ needed in the proof of Theorem \ref{thm:main}.

\begin{prop} If the hypotheses of Theorem \ref{thm:main} are satisfied, then % $\mu \in C^{\infty}\left( \bbR \times [a,b]; \cF_{\nu}(\Lambda(\cH)) \right)$.
$\mu(\cdot,t_0) \in C^{\infty}\left( \bbR, \cF \Lambda_{\nu}(\cH) \right)$ and $\mu(\lambda_0,\cdot) \in C\left([a,b], \cF \Lambda_{\nu}(\cH) \right)$ for any fixed  $\lambda_0 \in \bbR$ and $t_0 \in [a,b]$. Moreover, if $\{\varphi_t\}$ is of class $C^k$, then $\mu(\lambda_0,\cdot) \in C^k\left([a,b], \cF \Lambda_{\nu}(\cH) \right)$.
\end{prop}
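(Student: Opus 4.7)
The plan is to verify the claim in four stages, corresponding to (i) the Lagrangian property of $\mu(\lambda,t)$, (ii) the Fredholm property relative to $\nu$, (iii) smooth dependence on $\lambda$, and (iv) $C^k$ dependence on $t$.

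First I would check that $\mu(\lambda,t)$ is closed and Lagrangian in $\cH$. Isotropy is immediate from Green's identity: for $u_1, u_2 \in K_{\lambda,t}$, subtracting two instances of \eqref{LtBt} and using that $L_t u_i = \lambda u_i$ gives
\begin{align*}
0 = D_t(u_1,u_2) - D_t(u_2,u_1) = \int_{\pO}\bigl[(B_t u_1)(\gamma u_2) - (B_t u_2)(\gamma u_1)\bigr]\,d\mu = \omega(\tr_t u_1, \tr_t u_2),
\end{align*}
by symmetry of $D_t$. For maximality, I would use the weak solvability of the Dirichlet problem for $L_t - \lambda$ (possibly modulo a finite-dimensional eigenspace) to construct, for any $(f,g) \in \mu(\lambda,t)^\omega$, a function $u \in K_{\lambda,t}$ with $\tr_t u = (f,g)$; the point is that any target Dirichlet datum $f \in \gamma H^1(\Omega)$ can be attained by a weak solution, and the Neumann-type part $g$ is then forced by $D_t$. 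Closedness of $\mu(\lambda,t)$ follows once we parametrize it by boundary data via a Poisson-type operator, and $\nu$ is closed and Lagrangian by a direct computation from the definition \eqref{boundary}.

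Second, to establish the Fredholm property I would invoke Proposition~\ref{intersection}: $\mu(\lambda,t)\cap\nu$ is isomorphic to $\ker(L_{\cX,t}-\lambda)$, which is finite-dimensional by Proposition~\ref{elliptic}. For the closedness of $\mu(\lambda,t)+\nu$ and finiteness of the codimension, I would use the symplectic duality $\mu(\lambda,t)+\nu = \bigl(\mu(\lambda,t)\cap\nu\bigr)^{\omega}$ together with the fact that, in a symplectic Hilbert space, a Lagrangian pair with finite-dimensional intersection whose algebraic sum is closed automatically has finite-dimensional defect equal to the intersection dimension; alternatively I would write both subspaces as graphs of bounded operators (away from the Maslov cycle) and appeal to the standard characterization of $\cF\Lambda_\nu(\cH)$ via the Cayley transform recalled in Appendix~\ref{app:Maslov}.

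Third, for smooth dependence on $\lambda$ at fixed $t=t_0$, I would choose a Poisson lift: pick $\lambda_*$ off the Dirichlet spectrum of $L_{t_0}$ (which is discrete by Proposition~\ref{elliptic} applied to $\cX = H^1_0(\Omega)$); then the Dirichlet Poisson operator $P_{\lambda,t_0}\colon H^{1/2}(\pO)\to K_{\lambda,t_0}$ is an isomorphism for $\lambda$ in a neighborhood of $\lambda_*$ and depends analytically on $\lambda$ by bounded invertibility of a holomorphic family. Composing with $\tr_{t_0}$ shows $\lambda \mapsto \mu(\lambda,t_0)$ is analytic there. To handle Dirichlet eigenvalues, I would perturb: replace the pure Dirichlet lift with a lift into a complement of the finite-dimensional Dirichlet eigenspace, yielding a smoothly varying frame for $\mu(\lambda,t_0)$ in a uniform neighborhood. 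Since Dirichlet eigenvalues are isolated, these local charts patch into a $C^\infty$ (indeed real-analytic) map $\bbR\to\cF\Lambda_\nu(\cH)$.

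Fourth, for regularity in $t$, I would exploit the pull-back formula \eqref{Dtdef}: a change of variables writes $D_t$ on $\cX$ with coefficients depending on $\varphi_t, D\varphi_t$ and the $L^\infty$ assumption on $a^{ij}, c$, so $t \mapsto D_t$ inherits $C^k$ regularity from $\{\varphi_t\}$ (this is the content of Appendix~\ref{app:regular}). The associated rescaled boundary operator $B_t$ from \eqref{LtBt} is a first-order differential operator whose coefficients are algebraic in those of $D_t$, hence also $C^k$ in $t$. Combining this with the $\lambda$-step above through a joint Poisson parametrization, I obtain a frame for $\mu(\lambda_0,t)$ whose dependence on $t$ is $C^k$ in the operator norm on $\cH$, which gives the claimed continuity (or $C^k$-regularity) into $\cF\Lambda_\nu(\cH)$. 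The main obstacle I anticipate is the last point: verifying that the Poisson operators vary smoothly in $t$ in the \emph{low-regularity} (Lipschitz) setting, where one cannot invoke $H^2$-theory and must instead argue purely via bilinear forms and the Lax--Milgram framework, as developed in Appendix~\ref{app:regular}.
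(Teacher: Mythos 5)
Your outline captures the overall architecture (isotropy from Green's identity, $\dim(\mu\cap\nu)<\infty$ from Proposition \ref{intersection}, a solution-operator parametrization for regularity, Appendix \ref{app:regular} for the $t$-dependence), but it has two genuine gaps. The first and most serious is the Fredholm property: you assert that a Lagrangian pair with finite-dimensional intersection ``whose algebraic sum is closed automatically has finite-dimensional defect,'' but closedness of $\mu(\lambda,t)+\nu$ is precisely the nontrivial part and does not follow from finiteness of the intersection --- in an infinite-dimensional symplectic Hilbert space two Lagrangians can meet trivially and still have a dense, non-closed sum. The duality $(\mu+\nu)^{\perp}=J(\mu\cap\nu)$ only identifies the codimension \emph{after} closedness is known. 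The paper proves closedness via Kato's criterion, i.e.\ a positive lower bound for $\operatorname{dist}(x,\nu)/\operatorname{dist}(x,\mu\cap\nu)$ on $\mu$, and this requires a dedicated energy estimate (Lemma \ref{lem:energy}) that exploits the special product structure $\nu=\gamma(\cX)\oplus R[\gamma(\cX)^{\perp}]$, together with a compactness-plus-unique-continuation argument. Your fallback (``Cayley transform characterization of $\cF\Lambda_\nu$'') is circular, since membership in $\cF\Lambda_\nu$ is what is being proved.

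The second gap concerns your Dirichlet Poisson lift. Near a Dirichlet eigenvalue of $L_{t_0}-\lambda$ the space $\mu(\lambda,t_0)$ intersects $\{0\}\oplus H^{-1/2}(\pO)$ nontrivially, so it is \emph{not} a graph over the Dirichlet reference Lagrangian there, and ``lifting into a complement of the eigenspace'' does not produce a parametrization over a fixed Lagrangian subspace from which smoothness of the orthogonal projections can be read off. The paper avoids this by switching the reference subspace: for each $(\lambda_0,t_0)$ there is a Robin parameter $\beta_0$ making the perturbed form $D_{\beta_0,\lambda,t}$ invertible (Theorem 3.2 of \cite{Rohleder2014} plus Lemma \ref{openinv}), and $\mu(\lambda,t)$ is then realized locally as the graph of a \emph{selfadjoint} operator $A(\lambda,t)$ over the fixed Robin Lagrangian $\rho=\{(f,g):f+\beta_0R^{-1}g=0\}$. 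This simultaneously delivers maximality of $\mu(\lambda,t)$ (a selfadjoint graph over a Lagrangian is Lagrangian, hence equals the isotropic space $\mu$ containing it) --- whereas your maximality argument also leans on Dirichlet solvability and degenerates at Dirichlet eigenvalues --- and the regularity of the projections via \eqref{projA} and Proposition \ref{Dform}. Your stage (iv) is essentially correct and matches the paper.
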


In particular, for each $(\lambda,t) \in \bbR \times [a,b]$ the subspaces $\mu(\lambda,t)$ and $\nu$ are Lagrangian and comprise a Fredholm pair. Moreover, $\mu(\lambda,t)$ is smooth in $\lambda$ and $C^k$ in $t$. As described in Appendix \ref{app:Maslov}, the Maslov index is defined for any continuous path in the Fredholm--Lagrangian Grassmannian, but its computation via crossing forms requires differentiability.

We assume for the remainder of the section that the hypotheses of Theorem \ref{thm:main} are satisfied.

\subsection{The trace map}
Recall that for each $t \in [a,b]$, there exist operators $L_t$ and $B_t$ such that
\begin{align} \label{weakGreen}
	D_t(u,v) = \left< L_t u, v \right>_{L^2(\Omega)} + \int_{\pO} (B_tu)(\gamma v) d\mu
\end{align}
provided $u,v \in H^1(\Omega)$ and $L_t u \in L^2(\Omega)$ (cf. Theorem 4.4 of \cite{M00}).

We define the space $H^{1,0}_{L_t}(\Omega) = \{ u \in H^1(\Omega) : L_t u \in L^2(\Omega) \}$
with the graph norm
\[
	\|u\|_{L_t}^2 = \|u\|_{H^1(\Omega)}^2 + \|L_t u\|_{L^2(\Omega)}^2.
\]
Note that $K_{\lambda,t} \subset H^{1,0}_{L_t}$ and each $u \in K_{\lambda,t}$ satisfies $\|u\|_{L_t} \leq C \|u\|_{H^1(\Omega)}$ for some constant $C = C(\lambda,t)$. The following lemma shows that $H^{1,0}_{L_t}(\Omega)$ is an appropriate domain for the trace operator.

\begin{lemma} \label{lemma:trprop}
For each $t \in [a,b]$ the map $\tr_t$ defined in \eqref{def:tr} extends to a bounded map
\[
	\tr_t\colon H^{1,0}_{L_t}(\Omega) \longrightarrow \Hh.
\]
%and the identity
%\begin{align} \label{weakGreen}
%	D_t(u,v) = \left<L_t u,v\right>_{L^2(\Omega)} + \int_{\pO} (B_tu)(\gamma v) d\mu
%\end{align}
%holds for all $u \in H^{1,0}_{L_t}(\Omega)$ and $v \in H^1(\Omega)$.
Moreover, if $U \subset \bbR \times [a,b]$ is open and $u_{\lambda,t} \in C^k(U, H^1(\Omega))$ satisfies $u_{\lambda,t} \in K_{\lambda,t}$ for all $(\lambda,t) \in U$, then $\tr_t(u_{\lambda,t}) \in C^k(U,\cH)$.
\end{lemma}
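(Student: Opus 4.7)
The first claim, that $\tr_t$ extends boundedly from $H^{1,0}_{L_t}(\Omega)$ to $\cH$, splits into its two components. The Dirichlet part is immediate from the standard trace theorem for Lipschitz domains, which gives $\gamma\colon H^1(\Omega) \to H^{1/2}(\pO)$ bounded, hence $\|\gamma u\|_{H^{1/2}(\pO)} \leq C\|u\|_{H^1(\Omega)} \leq C \|u\|_{L_t}$. The conormal part $B_t u$ I would construct by duality from the weak Green's formula \eqref{weakGreen}. Fixing a bounded right inverse $E\colon H^{1/2}(\pO) \to H^1(\Omega)$ of $\gamma$, I would define, for $u \in H^{1,0}_{L_t}(\Omega)$ and $f \in H^{1/2}(\pO)$,
\[
	\langle B_t u, f\rangle := D_t(u, Ef) - \langle L_t u, Ef\rangle_{L^2(\Omega)}.
\]
Independence of the choice of extension follows because any two extensions $Ef$, $E'f$ differ by an element of $H^1_0(\Omega)$, on which the form $D_t(u,\cdot)$ agrees with $\langle L_t u, \cdot\rangle_{L^2}$ by the very definition of $L_t u \in L^2(\Omega)$. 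Boundedness of $D_t$ on $H^1(\Omega) \times H^1(\Omega)$, together with $\|Ef\|_{H^1(\Omega)} \leq C\|f\|_{H^{1/2}(\pO)}$, then gives $\|B_t u\|_{H^{-1/2}(\pO)} \leq C \|u\|_{L_t}$.

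For the second claim, $\gamma u_{\lambda,t}$ inherits $C^k$ regularity as a map into $H^{1/2}(\pO)$ directly from the $C^k$ curve $u_{\lambda,t}$ in $H^1(\Omega)$, since $\gamma$ is a fixed bounded operator. For the conormal component I would use that $u_{\lambda,t} \in K_{\lambda,t}$ implies $L_t u_{\lambda,t} = \lambda u_{\lambda,t}$ in $L^2(\Omega)$, so the definition above collapses to
\[
	\langle B_t u_{\lambda,t}, f\rangle = D_t(u_{\lambda,t}, Ef) - \lambda \langle u_{\lambda,t}, Ef\rangle_{L^2(\Omega)}.
\]
As an element of $H^{-1/2}(\pO) = H^{1/2}(\pO)^*$, this depends on $(\lambda,t)$ through three $C^k$ inputs applied to the fixed bounded operator $E$: the curve $u_{\lambda,t} \in C^k(U, H^1(\Omega))$ by hypothesis, the bilinear form $D_t$ on $H^1(\Omega)$ with $C^k$ dependence on $t$ from the regularity results of Appendix \ref{app:regular}, and the scalar factor $\lambda$. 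Differentiating in $\lambda$ and in $t$ yields bounds on each partial derivative that are uniform over the unit ball of $H^{1/2}(\pO)$, so the difference quotients converge in the operator-norm topology and $B_t u_{\lambda,t} \in C^k(U, H^{-1/2}(\pO))$.

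The main obstacle is not either identity itself but the $C^k$-in-$t$ regularity of the pullback form $D_t$ as a bilinear form on $H^1(\Omega)$. This must be extracted from the $C^k$ hypothesis on $\varphi_t$ in $L^\infty(\Omega,\bbR^n)$ together with the $C^k$ regularity of $D\varphi_t$ in $L^\infty(\Omega,\bbR^{n\times n})$, via the change-of-variables formula \eqref{Dtdef}, which introduces both the Jacobian and its inverse transpose into the coefficients. Once that abstract regularity is in hand—this is precisely the content deferred to Appendix \ref{app:regular}—the rest of the lemma is formal, essentially a chain-rule argument combined with the identification $H^{-1/2}(\pO) = H^{1/2}(\pO)^*$.
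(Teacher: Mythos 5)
Your proof is correct and follows essentially the same route as the paper: the key step in both is to express $\int_{\pO}(B_t u_{\lambda,t})f\,d\mu$ as $D_t(u_{\lambda,t},Ef)-\lambda\langle u_{\lambda,t},Ef\rangle_{L^2(\Omega)}$ via the weak Green identity and a bounded right inverse $E$ of $\gamma$, and then read off the $C^k$ regularity from that of $u_{\lambda,t}$ and of the coefficients of $D_t$. The only cosmetic difference is that the paper cites McLean (Theorem 3.37 and Lemma 4.3) for the boundedness of $\tr_t$ on $H^{1,0}_{L_t}(\Omega)$, whereas you reconstruct that duality argument explicitly.
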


\begin{proof} 
The boundedness of $\tr_t$ follows from Theorem 3.37 and Lemma 4.3 of \cite{M00}.

The differentiability of $(\lambda,t) \mapsto \gamma u_{\lambda,t}$ and $\lambda \mapsto B_t u_{\lambda,t}$ follows immediately. However, the regularity of the map $t \mapsto B_t u_{\lambda,t}$ is more subtle since the domain of $B_t$ is $t$-dependent.

Since $K_{\lambda,t} \subset H^{1,0}_{L_t}$, it follows that
\begin{equation}
\label{eqn:energy}	
\int_{\pO} (B_t u_{\lambda,t}) (\gamma v) d\mu = D_t(u_{\lambda,t},v) - \lambda \left<u_{\lambda,t},v\right>_{L^2(\Omega)}
\end{equation}
for all $v \in H^1(\Omega)$ and all $(\lambda,t) \in U$.
Equivalently, 
\[
	\int_{\pO} (B_t u_{\lambda,t}) f \,d\mu = D_t(u_{\lambda,t},Ef) - \lambda \left<u_{\lambda,t},Ef\right>_{L^2(\Omega)}
\]
for all $f \in H^{1/2}(\pO)$, where $E\colon H^{1/2}(\pO) \ra H^1(\Omega)$ is a bounded right inverse for the Dirichlet trace $\gamma$.
By assumption $u_{\lambda,t} \in C^k(U,H^1(\Omega))$ and $D_t$ is smooth, so we find that $B_t u_{\lambda,t} \in C^k(U, H^{-1/2}(\pO))$.
\end{proof}

The next lemma, a consequence of the unique continuation property for second-order elliptic operators, shows that $\tr_t$ gives an isomorphism from $K_{\lambda,t}$ onto $\mu(\lambda,t)$. This implies $\dim \tr_t(V) = \dim V$ for any finite-dimensional subspace $V \subset K_{\lambda,t}$ (cf. Proposition \ref{intersection}).

\begin{lemma} \label{Carleman}
For each $(\lambda,t) \in \bbR \times [a,b]$ there exists $C = C(\lambda,t)$ such that
\begin{equation}
\label{tr-est}
	\|u\|_{H^1(\Omega)} \leq C \|\tr_t u\|_{\cH}
\end{equation}
for every $u \in K_{\lambda,t}$.
\end{lemma}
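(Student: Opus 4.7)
The plan is a proof by contradiction combining a compactness argument with a unique continuation principle. Suppose the estimate fails at some $(\lambda,t)$; then there exists a sequence $\{u_n\} \subset K_{\lambda,t}$ with $\|u_n\|_{H^1(\Omega)} = 1$ and $\|\tr_t u_n\|_\cH \to 0$. The goal is to extract a nonzero limit in $K_{\lambda,t}$ with vanishing trace and then show that no such function exists.

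First I would establish strong $H^1$ convergence of a subsequence. By the Banach--Alaoglu and Rellich--Kondrachov theorems, after passing to a subsequence $u_n \rightharpoonup u$ in $H^1(\Omega)$ and $u_n \to u$ in $L^2(\Omega)$. To upgrade to strong $H^1$ convergence, apply the Green identity \eqref{weakGreen} to $u_n - u_m \in K_{\lambda,t} \subset H^{1,0}_{L_t}(\Omega)$ together with $L_t(u_n - u_m) = \lambda (u_n - u_m)$:
\begin{equation*}
D_t(u_n - u_m, u_n - u_m) = \lambda \|u_n - u_m\|_{L^2}^2 + \int_{\pO} B_t(u_n - u_m)\,\gamma(u_n - u_m)\,d\mu.
\end{equation*}
The boundary integral is bounded by $\|B_t(u_n-u_m)\|_{H^{-1/2}}\|\gamma(u_n-u_m)\|_{H^{1/2}}$, which tends to $0$ since $\tr_t u_n \to 0$ in $\cH$. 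Combined with the Gårding-type coercivity of the strongly elliptic form $D_t$ (that is, $D_t(w,w) \geq c\|w\|_{H^1}^2 - C\|w\|_{L^2}^2$) and strong $L^2$ convergence, this shows $\{u_n\}$ is Cauchy in $H^1(\Omega)$. Hence $u_n \to u$ in $H^1$ with $\|u\|_{H^1} = 1$, so $u \neq 0$.

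Next I would identify the limit. Since $K_{\lambda,t}$ is defined by a weak equation that is continuous in $u$, the limit $u$ lies in $K_{\lambda,t}$. Because $L_t u_n = \lambda u_n \to \lambda u = L_t u$ in $L^2$, we have $u_n \to u$ in the graph norm of $H^{1,0}_{L_t}(\Omega)$, and Lemma \ref{lemma:trprop} then gives $\tr_t u = 0$, i.e., $\gamma u = 0$ and $B_t u = 0$. In particular $u \in H^1_0(\Omega)$, and plugging $B_t u = 0$ back into \eqref{weakGreen} shows that $D_t(u,v) = \lambda \langle u,v\rangle_{L^2(\Omega)}$ for \emph{all} $v \in H^1(\Omega)$, not merely $v \in H^1_0(\Omega)$.

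Finally I would derive a contradiction via unique continuation. Extend $u$ by zero across $\pO$ to a function $\tilde{u} \in H^1(\Omega')$ on a slightly larger open neighborhood $\Omega' \supsetneq \overline{\Omega}$; this is legitimate because $\gamma u = 0$. Using the stronger weak formulation just established, one verifies directly that $\tilde{u}$ solves $(L_t - \lambda)\tilde{u} = 0$ weakly on $\Omega'$ (with the coefficients extended arbitrarily outside $\Omega$): for $v \in C^\infty_0(\Omega')$, the integrals over $\Omega' \setminus \Omega$ vanish, and what remains is $D_t(u, v|_\Omega) - \lambda \langle u, v|_\Omega\rangle_{L^2(\Omega)} = 0$. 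Since $\tilde{u}$ vanishes on the nonempty open set $\Omega' \setminus \overline{\Omega}$, the unique continuation principle for second-order elliptic operators forces $\tilde{u} \equiv 0$, contradicting $u \neq 0$.

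The main obstacle is the unique continuation step. Classical Aronszajn-type results require Lipschitz leading coefficients, whereas the coefficients of $L_t$ are only continuous at best (and may degrade to $L^\infty$ after pulling back through a Lipschitz $\varphi_t$). This can be circumvented by transporting the argument back to $\Omega_t$, where $L$ has its original continuous coefficients as assumed in Theorem \ref{thm:main}, and applying the appropriate variant of unique continuation (e.g., a Carleman-estimate-based result) there—this is presumably the reason the lemma is labelled \emph{Carleman}.
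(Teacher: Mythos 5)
Your argument is correct and follows essentially the same route as the paper's: a proof by contradiction via a compactness argument, reduced at the end to the unique continuation principle for solutions with vanishing Cauchy data $\gamma u = B_t u = 0$ (which the paper handles by citation rather than by your zero-extension argument). The only real difference is cosmetic: by normalizing $\|u_n\|_{L^2(\Omega)} = 1$ rather than $\|u_n\|_{H^1(\Omega)} = 1$, the paper obtains a nonzero weak limit directly from Rellich compactness, so it never needs your (correct, but unnecessary) upgrade to strong $H^1$ convergence.
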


The constant $C$ can be chosen uniformly on compact subsets of $\bbR \times [a,b]$, but we do not require such generality.

\begin{proof}
 It follows from the coercivity of $D_t$ that
\begin{equation}
\label{tr-est-gen}
	\|u\|_{H^1(\Omega)} \leq C (\|\tr_t u\|_{\cH} +  \| u \|_{L^2 (\Omega)})
\end{equation}
for all $u \in K_{\lambda,t}$. To obtain the stronger estimate \eqref{tr-est} we argue by contradiction, using a standard compactness argument.

Assuming the existence of a sequence $\{u_i\}$ in $K_{\lambda,t}$ with $\|u_i\|_{L^2(\Omega)} = 1$ and $\|u_i\|_{H^1} \geq i \|\tr_t u_i\|_{\cH}$ for each $i$, we conclude from \eqref{tr-est-gen} that $\{u_i\}$ is bounded in $H^1(\Omega)$. Therefore, there is a function $\bar{u} \in H^1(\Omega)$ with $\| \bar{u} \|_{L^2(\Omega)} = 1$, and a subsequence $\{u_i\}$, such that $u_i \ra \bar{u}$ in $L^2(\Omega)$ and $u_i \rightharpoonup \bar{u}$ in $H^1(\Omega)$. It follows that $\bar{u} \in K_{\lambda,t}$, and so $\tr_t \bar{u} \in \cH$ is defined. The boundedness of $\gamma\colon H^1(\Omega) \to H^{1/2}(\pO)$ implies $\gamma u_i \rightharpoonup \gamma \bar{u}$ in $H^{1/2}(\pO)$ and \eqref{weakGreen} yields $B_t u_i \rightharpoonup B_t \bar{u}$ in $H^{-1/2}(\pO)$, hence $\tr_t u_i \rightharpoonup \tr_t \bar{u}$. Since $\{u_i\}$ is bounded in $H^1(\Omega)$ we have $\tr_t u_i \ra 0$ in $\cH$, which implies $\tr_t \bar{u} = 0$.

By construction $\bar{u} \in H^1(\Omega)$ is a nonvanishing weak solution to $L_t \bar{u} = \lambda \bar{u}$, with boundary data $\gamma \bar{u} = 0$ and $B_t \bar{u} = 0$. It follows from a unique continuation argument (see Proposition 2.5 of \cite{BR12}) that this is only possible if $\bar{u} \equiv 0$, so we obtain a contradiction and the proof is complete. 
%The required unique continuation argument is given in Proposition 2.5 of \cite{BR12}; see also Theorem 3.2.2 in \cite{isakov2006inverse}, and the general survey \cite{tataru2004unique}.
\end{proof}

\subsection{The solution space}
We now turn our attention to the space $\mu(\lambda,t) = \tr_t(K_{\lambda,t})$.

\begin{lemma} \label{muisotropic}
For each $(\lambda,t) \in \bbR \times [a,b]$, $\mu(\lambda,t)$ is a closed, isotropic subspace of $\cH$.
\end{lemma}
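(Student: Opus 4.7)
The plan is to dispatch the two properties separately: isotropy is a pure algebraic/integration-by-parts calculation from the Green identity \eqref{weakGreen}, while closedness follows by combining the $H^1$ completeness of $K_{\lambda,t}$ with the two-sided bound on $\tr_t$ provided by Lemma \ref{Carleman}.

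For isotropy, fix $u_1,u_2\in K_{\lambda,t}$. First I would observe that the defining condition of $K_{\lambda,t}$ says exactly that $L_t u_i = \lambda u_i$ in the distributional sense: testing $D_t(u_i,v)=\lambda\langle u_i,v\rangle_{L^2(\Omega)}$ against arbitrary $v\in H^1_0(\Omega)$ identifies $L_t u_i$ with $\lambda u_i$, which is in $L^2(\Omega)$. Hence $u_i\in H^{1,0}_{L_t}(\Omega)$ and the Green identity \eqref{weakGreen} applies with the pair $(u_1,u_2)$ in both orders. Writing
\begin{align*}
D_t(u_1,u_2) &= \langle L_t u_1,u_2\rangle_{L^2(\Omega)} + \int_{\pO}(B_tu_1)(\gamma u_2)\,d\mu, \\
D_t(u_2,u_1) &= \langle L_t u_2,u_1\rangle_{L^2(\Omega)} + \int_{\pO}(B_tu_2)(\gamma u_1)\,d\mu,
\end{align*}
subtracting, and using the symmetry of $D_t$ together with $L_t u_i = \lambda u_i$, the volume terms cancel and we are left with
\[
0 = \int_{\pO}\bigl[(\gamma u_2)(B_t u_1) - (\gamma u_1)(B_t u_2)\bigr]d\mu = -\omega(\tr_t u_1,\tr_t u_2),
\]
proving isotropy.

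For closedness, I would first note that $K_{\lambda,t}$ is a closed subspace of $H^1(\Omega)$, since it is the intersection over $v\in H^1_0(\Omega)$ of the kernels of the continuous linear functionals $u\mapsto D_t(u,v)-\lambda\langle u,v\rangle_{L^2(\Omega)}$. Now suppose $\{\tr_t u_n\}$ is a sequence in $\mu(\lambda,t)$ converging in $\cH$ to some $(f,g)$. By Lemma \ref{Carleman},
\[
\|u_n - u_m\|_{H^1(\Omega)} \le C\,\|\tr_t(u_n-u_m)\|_{\cH},
\]
so $\{u_n\}$ is Cauchy in $H^1(\Omega)$ and converges to some $\bar u\in K_{\lambda,t}$. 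Continuity of $\tr_t$ on $H^{1,0}_{L_t}(\Omega)$ (Lemma \ref{lemma:trprop}), applied to the $K_{\lambda,t}$-valued sequence where the $H^1$-norm controls the graph norm, gives $\tr_t u_n\to\tr_t\bar u$; hence $(f,g)=\tr_t\bar u\in\mu(\lambda,t)$.

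There is no real obstacle here; the only point to be careful about is the justification for invoking the Green identity, which requires the $L^2$-regularity of $L_t u_i$—a consequence of $u_i$ weakly solving $L_t u_i=\lambda u_i$. Everything else is an immediate application of the earlier lemmas.
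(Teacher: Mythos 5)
Your proof is correct and follows essentially the same route as the paper: isotropy via the Green identity \eqref{weakGreen} applied in both orders together with the symmetry of $D_t$, and closedness by combining the closedness of $K_{\lambda,t}$ in $H^1(\Omega)$ with Lemmas \ref{lemma:trprop} and \ref{Carleman}. You have merely written out in full the Cauchy-sequence argument that the paper leaves as "immediate."
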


\begin{proof} That $\mu(\lambda,t)$ is closed in $\cH$ follows immediately from Lemmas \ref{lemma:trprop} and \ref{Carleman} and the fact that $K_{\lambda,t}$ is a closed subspace of $H^1(\Omega)$. To see that $\mu(\lambda,t)$ is isotropic, consider $u,v \in K_{\lambda,t}$. It follows from \eqref{weakGreen} that
\[
	\int_{\pO} (B_tu) (\gamma v) d\mu = \int_{\pO} (B_tv)(\gamma u) d\mu,
\]
hence $\omega(\tr_t u, \tr_t v) = 0$ as required.
\end{proof}

%%%%%%%%% the regularity proof %%%%%%%%%%%%%%%
We next analyze the regularity of $\mu(\lambda,t)$ in the Lagrangian Grassmannian, recalling that the topology on $\Lambda(\cH)$ is defined by identifying a subspace $\mu$ with the orthogonal projection $P_{\mu}$ in the space of bounded operators $B(\cH)$. If $\rho \in \Lambda(\cH)$ and $A\colon \rho \ra \rho$ is a bounded, selfadjoint operator, then the graph of $A$ over $\rho$, defined by
\[
	G_{\rho}(A) = \{x + JAx : x \in \rho\}
\]
with $J$ as in \eqref{eqn:Jdef}, is also Lagrangian. By equation (2.16) of \cite{F04} the corresponding orthogonal projection is
\begin{align} \label{projA}
	P_{G_{\rho}(A)}(x + Jy) = (I + JA) \left[ (I + A^2)^{-1}(x+Ay) \right]
\end{align}
for $x,y \in \rho$, so it suffices to express $\{\mu(\lambda,t)\}$ as the graph of a suitably smooth family $\{A(\lambda,t)\}$ of selfadjoint operators on a fixed Lagrangian subspace.

If $L_t-\lambda$ has trivial Neumann kernel, then $\mu(\lambda,t)$ is the graph of the Neumann-to-Dirichlet map over the Lagrangian subspace $\{0\} \oplus H^{-1/2}(\pO)$, which can be shown to vary smoothly in $\lambda$ and $t$. More generally, in the proof of the following proposition we show that one can always find a Robin boundary condition for which $L_t - \lambda$ is invertible, then express $\mu(\lambda,t)$ as the graph of the corresponding Robin-to-Robin map.

\begin{prop} For each $(\lambda,t) \in \bbR \times [a,b]$, $\mu(\lambda,t)$ is a Lagrangian subspace of $\cH$ and $\mu(\cdot,t) \in C^{\infty}\left( \bbR, \Lambda(\cH) \right)$. If $\{\varphi_t\}$ is of class $C^k$, then $\mu(\lambda,\cdot) \in C^k\left([a,b], \Lambda(\cH) \right)$.
\end{prop}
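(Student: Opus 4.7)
The plan is to express $\mu(\lambda,t)$ locally as the graph of a bounded selfadjoint operator on a fixed Lagrangian complement, then apply formula \eqref{projA} to transfer regularity of that operator to the orthogonal projection $P_{\mu(\lambda,t)} \in B(\cH)$. Once $\mu(\lambda,t)$ is realized as such a graph it is automatically Lagrangian, strengthening Lemma \ref{muisotropic}.

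Fix $(\lambda_0,t_0)$ and consider the Robin Lagrangian subspaces
\[
\rho_\alpha := \{(f, -\alpha R f) : f \in H^{1/2}(\pO)\}, \qquad \alpha \in \bbR.
\]
The condition $\mu(\lambda_0,t_0) \cap \rho_\alpha = \{0\}$ is equivalent to $\lambda_0$ not being an eigenvalue of the selfadjoint realization of $L_{t_0}$ associated with the modified Dirichlet form
\[
D_t^\alpha(u,v) = D_t(u,v) + \alpha \int_{\pO}(\gamma u)(\gamma v)\,d\mu
\]
on $H^1(\Omega)$. This form is symmetric, closed, and bounded below (the trace inequality handles the case $\alpha<0$), and depends analytically on $\alpha$. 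By Kato's analytic perturbation theory the spectrum moves analytically with $\alpha$, so the set of $\alpha$ for which $\lambda_0$ is an eigenvalue is discrete, and some $\alpha_0$ can be chosen to avoid it. Continuity of the eigenvalues in $t$ extends this transversality to a neighborhood $U$ of $(\lambda_0,t_0)$. Taking $\rho := J\rho_{\alpha_0}$ as a Lagrangian complement, on $U$ we may write $\mu(\lambda,t)$ as the graph $\{x + JA(\lambda,t)x : x \in \rho\}$ for a bounded operator $A(\lambda,t) \colon \rho \to \rho$. The isotropy of $\mu(\lambda,t)$ (Lemma \ref{muisotropic}) forces $A(\lambda,t)$ to be selfadjoint, which shows $\mu(\lambda,t)$ is maximally isotropic, hence Lagrangian.

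To track regularity, parametrize the graph via Robin data $\phi \in H^{-1/2}(\pO)$: for each $\phi$ let $u = u(\lambda,t,\phi) \in H^1(\Omega)$ be the unique weak solution of
\[
D_t^{\alpha_0}(u,v) - \lambda \langle u,v\rangle_{L^2(\Omega)} = \int_{\pO} \phi\,(\gamma v)\,d\mu \quad \text{for all } v \in H^1(\Omega).
\]
Analytic perturbation theory applied to the resolvent of the selfadjoint operator associated to $D_t^{\alpha_0}$ gives $u \in C^\infty$ in $\lambda$ with values in $H^1(\Omega)$; the $C^k$ dependence of $D_t$ on $t$, established in Appendix \ref{app:regular}, transfers to $C^k$ dependence of $u$ on $t$. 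Lemma \ref{lemma:trprop} then produces the same regularity for $\tr_t u \in \cH$, and hence for $A(\lambda,t) \in B(\rho)$. Substituting into \eqref{projA} expresses $P_{\mu(\lambda,t)}$ as an algebraic combination of $A(\lambda,t)$, $J$, and $(I + A(\lambda,t)^2)^{-1}$, all of which inherit smoothness in $\lambda$ and $C^k$ regularity in $t$. Covering $\bbR \times [a,b]$ by such neighborhoods completes the proof.

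The main obstacle is tracking the $t$-dependence of the boundary trace $B_t u$, whose natural domain $H^{1,0}_{L_t}(\Omega)$ itself depends on $t$. As in the proof of Lemma \ref{lemma:trprop}, the argument routes through the weak identity \eqref{eqn:energy}, so that the $t$-dependence of $B_t u$ is inherited from the smooth $t$-dependence of $D_t$ rather than from a moving function space. A secondary technical step is confirming that $D_t^{\alpha_0}$ depends smoothly enough on $t$ to invoke analytic perturbation theory for the resolvent, which again follows from Appendix \ref{app:regular}.
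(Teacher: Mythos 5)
Your argument is essentially the paper's proof: localize near $(\lambda_0,t_0)$, choose a Robin parameter making the perturbed form invertible, realize $\mu(\lambda,t)$ as the graph of a bounded selfadjoint operator over a fixed Robin-type Lagrangian subspace (with selfadjointness forced by isotropy), and push the regularity supplied by Appendix \ref{app:regular} and Lemma \ref{lemma:trprop} through the projection formula \eqref{projA}. The only substantive difference is how the good Robin parameter is produced: the paper cites Theorem 3.2 of \cite{Rohleder2014}, whereas your Kato perturbation argument still needs a unique continuation input to rule out an eigenvalue branch that is constant in $\alpha$ (the first variation of a Robin eigenvalue is $\int_{\pO}(\gamma u)^2\,d\mu$, whose vanishing together with the Robin condition gives zero Cauchy data), which is exactly what the cited theorem supplies.
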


In the following proof (and nowhere else) a Banach space-valued map is called ``smooth" if it is $C^\infty$ with respect to $\lambda$ and $C^k$ with respect to $t$.

\begin{proof}

Fix $(\lambda_0,t_0) \in \bbR \times [a,b]$. We claim that there is an open set $U \subset \bbR \times [a,b]$ containing $(\lambda_0,t_0)$, a Lagrangian subspace $\rho \subset \cH$ and a family of bounded, selfadjoint operators $A(\lambda,t)\colon \rho \ra \rho$, such that $G_{\rho}(A(\lambda,t)) = \mu(\lambda,t)$ for $(\lambda,t) \in U$. The family $A(\cdot,\cdot)\colon U \to B(\rho)$ is smooth, so it follows from \eqref{projA} that the map $(\lambda,t) \mapsto P_{\mu(\lambda,t)}$ is smooth, completing the proof.

To see that the claimed $U$ and $A$ exist, we define a perturbed Dirichlet form $D_{\beta,\lambda,t}$ by
\[
	D_{\beta,\lambda,t}(u,v) = D_t(u,v) -\lambda \left<u,v\right>_{L^2(\Omega)} - \beta \int_{\pO} (R \gamma u)(\gamma v) d\mu
\]
for $u,v \in H^1(\Omega)$ and $\beta \in \bbR$. It follows from Theorem 3.2 of \cite{Rohleder2014} that $D_{\beta_0,\lambda_0,t_0}$ is invertible for some $\beta_0$, and Lemma \ref{openinv} implies $D_{\beta_0,\lambda,t}$ is invertible in a neighborhood $U$ of $(\lambda_0,t_0)$.

We define the subspace
\[
	\rho = \{ (f,g) \in \cH : f + \beta_0 R^{-1}  g = 0 \},
\]
which is Lagrangian, with
\[
	J \rho = \{ (f,g) \in \cH : g - \beta_0 Rf = 0 \}.
\]
Let $(f,g) \in \rho$. For each $(\lambda,t) \in U$  there exists a unique function $u_{\lambda,t} \in H^1(\Omega)$ such that
 \begin{align} \label{betaeqn}
 	D_{\beta_0,\lambda,t}(u_{\lambda,t},v) = \int_{\pO} (g - \beta_0 Rf)(\gamma v) d\mu
 \end{align}
for all $v \in H^1(\Omega)$. In particular, $D_t(u_{\lambda,t},v) = \lambda \left<u_{\lambda,t},v\right>_{L^2(\Omega)} $ for $v \in H^1_0(\Omega)$, so $u_{\lambda,t} \in K_{\lambda,t}$. Proposition \ref{Dform} implies $(\lambda,t) \mapsto u_{\lambda,t}$ is smooth in $H^1(\Omega)$ and it follows from Lemma \ref{lemma:trprop} that the path
\begin{align} \label{betatrace}
	(\lambda,t) \mapsto B_t u_{\lambda,t} = g - \beta_0 Rf + \beta_0  R \gamma (u_{\lambda,t})
\end{align}
is smooth in $H^{-1/2}(\pO)$. (For our choice of boundary conditions, the regularity of the above map only requires the boundedness of $\gamma$ and not the full statement of Lemma \ref{lemma:trprop}.)

Since $J$ is an isomorphism, we can implicitly define $A(\lambda,t)\colon \rho \ra \cH$ by
\[
	JA(\lambda,t)(f,g) = \left( \gamma (u_{\lambda,t}) - f,  \beta_0 R \gamma (u_{\lambda,t}) -\beta_0 R f \right)
\]
for $(f,g) \in \rho$. It follows that $JA(\lambda,t)(f,g) \in J \rho$, so we in fact have $A(\lambda,t)\colon \rho \ra \rho$.

To see that $A$ is selfadjoint, we take $(f_1,g_1)$ and $(f_2,g_2)$ in $\rho$, and let $u_1$ and $u_2$ denote the respective solutions  to \eqref{betaeqn} (omitting the $\lambda$ and $t$ subscripts for convenience). Writing \eqref{betaeqn} for $u_1$ with the test function $v=u_2$, and vice versa, we have
\[
	D_t(u_1,u_2) - \lambda \left<u_1,u_2\right>_{L^2(\Omega)} = \int_{\pO}[\beta_0 R(\gamma u_1 - f_1) + g_1](\gamma u_2) d\mu
\]
and
\[
	D_t(u_2,u_1) - \lambda \left<u_2,u_1\right>_{L^2(\Omega)} = \int_{\pO}[\beta_0 R(\gamma u_2 - f_2)+ g_2](\gamma u_1) d\mu.
\]
Subtracting and using the fact that $\int_{\pO} (Rh_1)h_2 d\mu = \int_{\pO} (Rh_2)h_1 d\mu$ for $h_1,h_2 \in H^{1/2}(\pO)$ yields
\[
	\int_{\pO} [ g_1 - \beta_0 R f_1](\gamma u_2) d\mu = \int_{\pO} [ g_2 - \beta_0 R f_2](\gamma u_1) d\mu.
\]
We next recall the relation $\omega(x,y) = \left<Jx,y\right>_{\cH}$ for all $x,y \in \cH$ and compute using the above equality
\begin{align*}
	 \left<A(f_1,g_1), (f_2,g_2)\right>_{\cH} - \left<A(f_2,g_2), (f_1,g_1)\right>_{\cH} 
	& = \omega \left( JA(f_2,g_2), (f_1,g_1)\right) - \omega \left( JA(f_1,g_1), (f_2,g_2)\right) \\
	&  = \int_{\pO} \left[ f_1 g_2  -\beta_0 (R f_1) f_2 - f_2 g_1 + \beta_0 (R f_2)f_1 \right] d\mu \\
	& = \omega\left( (f_1,g_1), (f_2,g_2) \right).
\end{align*}
The right-hand side vanishes because $\rho$ is Lagrangian, and it follows that $A(\lambda,t)$ is selfadjoint.

In particular, this implies the graph $G_{\rho}(A(\lambda,t)) \subset \cH$ is Lagrangian, and hence maximal. We also have from the definition of $A$ and \eqref{betatrace} that
\[
	(f,g) + JA(\lambda,t)(f,g) = \tr_t (u_{\lambda,t})
\]
for any $(f,g) \in \rho$, and so $G_{\rho}(A(\lambda,t)) \subset \mu(\lambda,t)$. Since $\mu(\lambda,t)$ is isotropic by Lemma \ref{muisotropic}, the maximality of the graph implies $G_{\rho}(A(\lambda,t)) = \mu(\lambda,t)$. Therefore $\mu(\lambda,t) \subset \cH$ is Lagrangian and the corresponding orthogonal projections in $B(\cH)$ vary smoothly with respect to $\lambda$ and $t$.
\end{proof}

\subsection{The boundary space}
We next discuss the subspace $\nu$ defined in \eqref{boundary}.

\begin{lemma} The boundary space $\nu \subset \cH$ is Lagrangian.
\end{lemma}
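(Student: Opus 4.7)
The plan is to show $\nu$ is both isotropic and maximal (i.e.\ coincides with its symplectic complement). I would set $W := \gamma(\cX) \subset H^{1/2}(\pO)$ and $W^{\circ} := \{g \in H^{-1/2}(\pO) : \int_{\pO} g f\, d\mu = 0 \text{ for all } f \in W\}$, the annihilator under the duality pairing. Then $\nu = W \oplus W^{\circ}$ by definition.

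First I would check that $W$ is closed in $H^{1/2}(\pO)$. Since $\cX$ is a closed subspace of $H^1(\Omega)$ containing $H^1_0(\Omega) = \ker \gamma$, the trace $\gamma$ descends to a continuous bijection $H^1(\Omega)/H^1_0(\Omega) \to H^{1/2}(\pO)$, which is a homeomorphism by the open mapping theorem. Under this isomorphism, $W$ corresponds to the closed subspace $\cX / H^1_0(\Omega)$, so $W$ is closed. By the bipolar theorem applied to the dual pair $(H^{1/2}(\pO), H^{-1/2}(\pO))$, we then have $W^{\circ \circ} = W$.

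Isotropy is immediate: if $(f_i,g_i) \in \nu$ for $i=1,2$, then $f_i \in W$ and $g_i \in W^{\circ}$, so $\int_{\pO} f_1 g_2\, d\mu = \int_{\pO} f_2 g_1\, d\mu = 0$, whence $\omega((f_1,g_1),(f_2,g_2)) = 0$.

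For maximality, take $(f,g) \in \nu^{\omega}$. Testing against elements of the form $(0, g') \in \nu$ with $g' \in W^{\circ}$ gives $\int_{\pO} f g'\, d\mu = 0$ for all $g' \in W^{\circ}$, i.e.\ $f \in W^{\circ\circ} = W$. Testing against $(f',0) \in \nu$ with $f' \in W$ gives $\int_{\pO} f' g\, d\mu = 0$ for all $f' \in W$, i.e.\ $g \in W^{\circ}$. Hence $(f,g) \in \nu$ and $\nu = \nu^{\omega}$.

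The only mildly delicate point is the closure of $W = \gamma(\cX)$, which is what allows the bipolar identification $W^{\circ\circ} = W$; everything else is a direct computation from the definitions.
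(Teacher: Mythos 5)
Your proof is correct and follows essentially the same route as the paper: both rest on the decomposition of $\nu$ into the closed subspace $\gamma(\cX) \subset H^{1/2}(\pO)$ and its annihilator in $H^{-1/2}(\pO)$, with isotropy read off from the definition and maximality coming from the duality between a closed subspace and its annihilator. The paper phrases the last step as the identity $J\nu = \nu^{\bot}$ via the Riesz map $R$, which is exactly your bipolar computation $\nu^{\omega}=\nu$ in different notation, and it gets closedness of $\gamma(\cX)$ from the bounded right inverse of $\gamma$ rather than your quotient/open-mapping argument; both are fine.
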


\begin{proof} We first observe that $\nu$ can be decomposed as
\begin{align} \label{nusplit}
	\nu = \gamma(\cX) \oplus R \left[ \gamma(\cX)^{\bot} \right],
\end{align}
where $\gamma(\cX)^{\bot}$ denotes the orthogonal complement of $\gamma(\cX)$ in $H^{1/2}(\pO)$. 
%In general, $R ((\gamma (X))^\perp ) = (R (\gamma (X)))^\perp$, because $g \in R ((\gamma (X))^\perp )$ iff $R^{-1}g \in (\gamma (X))^\perp$ iff $\left<R^{-1}g,\gamma u\right>_{H^{1/2}} = 0$ for all $u \in \cX$ iff $\left<g,R \gamma u\right>_{H^{-1/2}} = 0$ for all $u \in \cX$ iff $g \in (R (\gamma (X)))^\perp$. 
By definition, $g \in R \left[ \gamma (X)^\perp \right]$ if and only if $\left<R^{-1}g,\gamma u\right>_{H^{1/2}(\pO)} = 0$ for all $u \in \cX$. Since $\left<R^{-1}g,\gamma u\right>_{H^{1/2}(\pO)} = \left<g,R \gamma u\right>_{H^{-1/2}(\pO)}$, this implies
$R \left[ \gamma (X)^\perp \right] = \left[ R \gamma(\cX) \right]^\perp$.

The subspace $\nu \subset \cH$ is closed because $\gamma\colon H^1(\Omega) \ra H^{1/2}(\pO)$ admits a bounded right inverse, and \eqref{boundary} implies $\nu$ is isotropic. A direct computation shows that
\begin{equation}
\label{eqn:nuperp}
	J\nu = \gamma(\cX)^{\bot} \oplus R \left[ \gamma(\cX) \right] = \nu^{\bot} ,
\end{equation}
hence $\nu$ is Lagrangian.
\end{proof}

The boundary space $\nu$ is rather special within the class of Lagrangian subspaces. It decomposes as a direct sum of $H^{1/2}(\pO)$ and $H^{-1/2}(\pO)$ factors, as in \eqref{nusplit}, so $(f,g) \in \nu$ precisely when both $(f,0)$ and $(0,g)$ are contained in $\nu$. This fact, which is not true for arbitrary Lagrangian subspaces, is a key ingredient in the proof of the following energy estimate, which is essential to the proof of Lemma \ref{lem:Fredholm}.

\begin{lemma} \label{lem:energy}
Let $P_{\nu}$ denote the $\cH$-orthogonal projection onto $\nu$, and $P_{\nu}^{\bot} = I-P_{\nu}$ the projection onto $\nu^{\bot}$.
There is a constant $C = C(\lambda,t)$ such that
\[
	\|u\|_{H^1(\Omega)}^2 \leq C \left( \|u\|_{L^2(\Omega)}^2 + \left\| P_{\nu}^{\bot} \tr_t u \right\|_\cH^2 \right)
\]
for each $u \in K_{\lambda,t}$.
\end{lemma}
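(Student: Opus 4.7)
The plan is to combine the coercivity of the Dirichlet form $D_t$ with the weak Green's identity, and then exploit the splitting $\nu=\gamma(\cX)\oplus R[\gamma(\cX)^\perp]$ to kill the main boundary term.

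First, by strong ellipticity (G\aa{}rding's inequality for $D_t$), there is a constant $C_1=C_1(t)$ such that
\[
	\|u\|_{H^1(\Omega)}^2 \le C_1\bigl(D_t(u,u)+\|u\|_{L^2(\Omega)}^2\bigr)
\]
for every $u\in H^1(\Omega)$. For $u\in K_{\lambda,t}$ we have $L_t u=\lambda u\in L^2(\Omega)$, so the weak Green identity \eqref{weakGreen} applied with $v=u$ gives
\[
	D_t(u,u) = \lambda\|u\|_{L^2(\Omega)}^2 + \int_{\pO}(B_tu)(\gamma u)\,d\mu.
\]
Thus the task reduces to bounding the boundary integral $I(u):=\int_{\pO}(B_tu)(\gamma u)\,d\mu$ by $\varepsilon\|u\|_{H^1}^2$ plus a constant multiple of $\|u\|_{L^2}^2+\|P_\nu^\perp\tr_t u\|_\cH^2$, for $\varepsilon>0$ small enough to absorb into the left side.

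The second step is the decomposition. Write $\tr_tu=(f_1,g_1)+(f_2,g_2)$ with $(f_1,g_1)=P_\nu\tr_tu\in\nu$ and $(f_2,g_2)=P_\nu^\perp\tr_tu\in J\nu$. Using \eqref{nusplit} and \eqref{eqn:nuperp}, this means $f_1\in\gamma(\cX)$ and $g_1\in[R\gamma(\cX)]^\perp$, while $f_2\in\gamma(\cX)^\perp$ and $g_2\in R\gamma(\cX)$. Expand
\[
	I(u)=\int_{\pO} g_1f_1\,d\mu+\int_{\pO} g_1f_2\,d\mu+\int_{\pO} g_2f_1\,d\mu+\int_{\pO} g_2f_2\,d\mu.
\]
The first term vanishes: since $f_1\in\gamma(\cX)$ we may write $f_1=\gamma v$ with $v\in\cX$, and the definition \eqref{boundary} of $\nu$ forces $\int_{\pO}g_1(\gamma v)\,d\mu=0$. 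This is precisely where the special direct-sum structure of $\nu$ is used.

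For the three remaining terms, apply the $H^{1/2}$-$H^{-1/2}$ duality to obtain
\[
	|I(u)|\le \|g_1\|_{H^{-1/2}}\|f_2\|_{H^{1/2}}+\|g_2\|_{H^{-1/2}}\|f_1\|_{H^{1/2}}+\|g_2\|_{H^{-1/2}}\|f_2\|_{H^{1/2}}.
\]
Since $\|f_1\|_{H^{1/2}}\le\|\tr_tu\|_\cH$ and $\|g_1\|_{H^{-1/2}}\le\|\tr_tu\|_\cH$, and the trace map $\tr_t\colon K_{\lambda,t}\to\cH$ is bounded by Lemma \ref{lemma:trprop} (with $\|u\|_{L_t}\le C\|u\|_{H^1(\Omega)}$ on $K_{\lambda,t}$), we have $\|f_1\|_{H^{1/2}}+\|g_1\|_{H^{-1/2}}\le C_2\|u\|_{H^1(\Omega)}$. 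On the other hand, $\|f_2\|_{H^{1/2}},\|g_2\|_{H^{-1/2}}\le\|P_\nu^\perp\tr_tu\|_\cH$ by definition of the $\cH$-norm. Young's inequality then gives, for any $\varepsilon>0$,
\[
	|I(u)|\le \varepsilon\|u\|_{H^1(\Omega)}^2 + C_\varepsilon\|P_\nu^\perp\tr_tu\|_\cH^2.
\]
Plugging back into the coercivity estimate and choosing $\varepsilon$ small enough that $C_1\varepsilon\le\tfrac12$, we absorb the $\|u\|_{H^1}^2$ term and obtain the claimed bound with $C=C(\lambda,t)$.

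The main subtlety is the vanishing of the $g_1f_1$ term, which uses the block structure of $\nu$ and not merely its Lagrangian property; for a general Lagrangian subspace this term would not vanish and the estimate would fail. Everything else is bookkeeping with duality, trace bounds, and Young's inequality.
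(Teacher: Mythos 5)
Your proof is correct and follows essentially the same route as the paper: coercivity plus the weak Green identity, the $P_\nu$/$P_\nu^{\bot}$ decomposition of $\tr_t u$, the vanishing of the $f_1g_1$ term via the block structure \eqref{nusplit} of $\nu$, and Young's inequality to absorb the $\varepsilon\|u\|_{H^1(\Omega)}^2$ term. The only cosmetic difference is that the paper also notes $\int_{\pO} f_2 g_2\,d\mu = 0$ using the analogous splitting \eqref{eqn:nuperp} of $\nu^{\bot}$, whereas you bound that term directly by $\|P_\nu^{\bot}\tr_t u\|_{\cH}^2$ — both work.
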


\begin{proof} It follows from \eqref{weakGreen} and the coercivity of $D_t$ that there exists $C' > 0$ with
\[
	\|u\|_{H^1(\Omega)}^2 \leq C' \left( \|u\|_{L^2(\Omega)}^2 + \int_{\pO} (B_t u)(\gamma u)d\mu \right)
\]
for $u \in K_{\lambda,t}$. For $\tr_t u = (f,g)$ we define
\begin{align*}
	(f_1,g_1) &= P_{\nu}(f,g) , \\
	(f_2,g_2) &= P_{\nu}^{\bot}(f,g),
\end{align*}
so that $f = f_1 + f_2$ and $g = g_1 + g_2$. We compute
\begin{align*}
	\int_{\pO} (B_t u)(\gamma u)d\mu &= \int_{\pO} (f_1 + f_2)(g_1 + g_2) d\mu \\
	& = \int_{\pO} f_1 g_2 d\mu + \int_{\pO} f_2 g_1 d\mu,
\end{align*}
using the fact that
\[
	\int_{\pO} f_1 g_1 d\mu = \left<f_1, R^{-1} g_1 \right>_{H^{1/2}(\pO)} = 0
\]
by \eqref{nusplit} because $(f_1,g_1) \in \nu$, and similarly for $(f_2,g_2) \in \nu^{\bot}$ using \eqref{eqn:nuperp}. Therefore
\begin{align*}
	\int_{\pO} (B_t u)(\gamma u)d\mu &\leq \epsilon \|(f_1,g_1)\|_{\cH}^2 + (4\epsilon)^{-1} \|(f_2,g_2)\|_{\cH}^2  \\
	&\leq \epsilon C'' \|u\|_{H^1(\Omega)}^2 + (4\epsilon)^{-1}\left\| P_{\nu}^{\bot} \tr_t u \right\|^2
\end{align*}
for any $\epsilon > 0$, and the result follows.%. We choose $\epsilon$ small enough that $\epsilon C' C'' < 1$ and the result follows.
\end{proof}

\subsection{The intersection}
\label{ss:int}
We complete the section by proving the Fredholm property of $\mu(\lambda,t)$ and $\nu$, and giving a proof of Proposition \ref{intersection}.

\begin{lemma} \label{lem:Fredholm}
For each $(\lambda,t) \in \bbR \times [a,b]$, $\mu(\lambda,t)$ and $\nu$ are a Fredholm pair.
\end{lemma}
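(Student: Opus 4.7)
The plan is to analyze the bounded linear map $\pi := P_\nu^{\bot} \circ \tr_t \colon K_{\lambda,t} \to \nu^{\bot}$, with $K_{\lambda,t}$ carrying the $H^1(\Omega)$ topology, and show that it has finite-dimensional kernel and closed range. The Fredholm property for $(\mu(\lambda,t),\nu)$ will then follow by transferring through the isomorphism $\tr_t\colon K_{\lambda,t} \to \mu(\lambda,t)$ provided by Lemma \ref{Carleman}, and exploiting the symplectic duality between $\mu \cap \nu$ and $(\mu+\nu)^{\bot}$.

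For the kernel, I would observe that $\ker \pi = \{u \in K_{\lambda,t} : \tr_t u \in \nu\}$, so the estimate of Lemma \ref{lem:energy} collapses on this subspace to $\|u\|_{H^1(\Omega)}^2 \leq C\|u\|_{L^2(\Omega)}^2$. Since $\ker\pi$ is closed in $H^1(\Omega)$ and its unit ball is $L^2$-precompact by Rellich's theorem, this equivalence of norms forces $\ker\pi$ to be finite-dimensional. For closed range, I would argue by contradiction that $\pi$ is bounded below on the $H^1$-orthogonal complement of $\ker\pi$: failure would produce $u_n$ in that complement with $\|u_n\|_{H^1} = 1$ and $\pi(u_n) \to 0$; Rellich extracts an $L^2$-convergent subsequence, Lemma \ref{lem:energy} then makes that subsequence $H^1$-Cauchy, and its limit would lie in $(\ker\pi)^{\bot} \cap \ker\pi = \{0\}$, contradicting $\|u_n\|_{H^1} = 1$. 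Closedness of the range follows in the standard way.

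Since $\tr_t$ restricts to an isomorphism $K_{\lambda,t} \to \mu(\lambda,t)$, we have $\mu(\lambda,t) \cap \nu = \tr_t(\ker\pi)$ finite-dimensional and $P_\nu^{\bot}(\mu(\lambda,t)) = \pi(K_{\lambda,t})$ closed in $\nu^{\bot}$. Writing $\mu(\lambda,t) + \nu = P_\nu^{\bot}(\mu(\lambda,t)) \oplus \nu$ as an $\cH$-orthogonal direct sum of two closed subspaces immediately yields closedness of $\mu+\nu$. For finite codimension I would use the identity $\omega(x,y) = \langle Jx, y\rangle_\cH$ recorded in the proof of the previous proposition: an element $x$ lies in $(\mu+\nu)^{\bot}$ iff $Jx$ is $\omega$-orthogonal to both $\mu$ and $\nu$, and since both subspaces are Lagrangian this forces $Jx \in \mu\cap\nu$. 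Hence $(\mu+\nu)^{\bot} = J^{-1}(\mu\cap\nu)$ is finite-dimensional.

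The main hurdle is the Rellich-based compactness step that upgrades the energy estimate to the semi-Fredholm property of $\pi$; once this is in hand, the passage to Fredholmness of the pair $(\mu,\nu)$ is a formal consequence of the Lagrangian structure of $\cH$.
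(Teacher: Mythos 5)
Your proposal is correct, and it reaches the conclusion by a somewhat different functional-analytic route than the paper. The paper treats the two halves of the Fredholm property separately: finite-dimensionality of $\mu\cap\nu$ is imported from Proposition \ref{intersection} together with the discreteness of the spectrum of $L_{\cX,t}$, while closedness of $\mu+\nu$ is obtained from Kato's minimal-gap criterion, i.e.\ by showing $\kappa = \inf \operatorname{dist}(x,\nu)/\operatorname{dist}(x,\mu\cap\nu) > 0$ via a contradiction argument resting on Lemma \ref{lem:energy}, Rellich compactness, and the injectivity of $\tr_t$ from Lemma \ref{Carleman}. You instead package both semi-Fredholm properties into the single operator $\pi = P_\nu^\perp\circ\tr_t$ on $K_{\lambda,t}$: the same energy estimate gives the norm equivalence $\|u\|_{H^1}\leq C\|u\|_{L^2}$ on $\ker\pi$, whence finite-dimensionality by Rellich and Riesz, and the same compactness mechanism shows $\pi$ is bounded below off its kernel, whence closed range and hence closedness of $\mu+\nu = P_\nu^\perp(\mu)\oplus\nu$. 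The finite-codimension step via $(\mu+\nu)^\perp = J(\mu\cap\nu)$ is identical in both arguments. What your version buys is self-containedness: it avoids the appeal to Kato's Theorem IV.4.2 and does not need the elliptic spectral theory behind Proposition \ref{intersection} for the intersection bound, since finite-dimensionality falls out of the energy estimate directly. What the paper's version buys is that the quantity $\kappa$ it controls is exactly the gap governing the topology on $\Lambda(\cH)$ used throughout, and the statement $\dim(\mu\cap\nu)=\dim\ker(L_{\cX,t}-\lambda)$ is needed elsewhere anyway. The underlying analytic content---coercivity, Rellich, and unique continuation---is the same in both.
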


\begin{proof}
For convenience we fix $(\lambda,t)$ and abbreviate $\mu = \mu(\lambda,t)$. Proposition \ref{intersection} yields $\dim(\mu \cap \nu) = \dim \ker(L_{\cX,t} - \lambda)$, which is finite by Theorem 4.10 of \cite{M00} (cf. Theorem 7.21 of \cite{F95}). Temporarily assuming $\mu+\nu$ is closed, and using that $\mu$ and $\nu$ are Lagrangian, we find that the codimension of $\mu + \nu$ equals the dimension of
\[
	\left( \mu + \nu \right)^{\bot} = \mu^{\bot} \cap \nu^{\bot} = J \mu \cap J \nu = J (\mu \cap \nu),
\]
which is finite because $J$ is an isomorphism.

To prove that $\mu+\nu$ is closed it suffices, by Theorem IV.4.2 of \cite{K76}, to show that the number
\begin{align} \label{gammadef}
	\kappa := \inf_{x \in \mu, x \notin \nu} \frac{\text{dist}(x,\nu)}{\text{dist}(x,\mu \cap \nu)}
\end{align}
is positive. Let $P$ and $\widehat{P}$ denote the orthogonal projections onto $\nu$ and $\mu \cap \nu$, respectively, so that $\text{dist}(x,\nu) = \|x - Px\|_{\cH}$ and $\text{dist}(x,\mu \cap \nu) = \|x - \widehat{P}x\|_{\cH}$.

We first show that there is a positive constant $K$ such that
\begin{align} \label{distbound}
	\|x\|_{\cH} \leq K \|x - Px\|_{\cH}
\end{align}
for all $x \in \mu \cap (\mu \cap \nu)^{\bot}$. Suppose not, so there exists a sequence $\{u_i\}$ in $K_{\lambda,t} \subset H^1(\Omega)$ such that the traces $x_i = \tr_t u_i$ are orthogonal to $\mu \cap \nu$ and satisfy
\[
	\|x_i\|_{\cH} \geq i \|x_i - Px_i\|_{\cH}.
\]
Rescaling, we can assume that $\|u_i\|_{L^2(\Omega)} = 1$ for each $i$. It follows from Lemma \ref{lem:energy} that
\[
	\|u_i\|_{H^1(\Omega)}^2 \leq C \left(1 + i^{-1} \|u_i\|_{H^1(\Omega)}^2\right),
\]
hence the sequence $\{u_i\}$ is bounded in $H^1(\Omega)$, and there exists an element $\bar{u} \in H^1(\Omega)$ and a subsequence $\{u_i\}$ such that $u_i \rightarrow \bar{u}$ in $L^2(\Omega)$ and $u_i \rightharpoonup \bar{u}$ in $H^1(\Omega)$. This implies $\|\bar{u}\|_{L^2(\Omega)} = 1$ and $D_t(u_i,v) \ra D_t(\bar{u},v)$ for any $v \in H^1(\Omega)$, hence $\bar{u} \in K_{\lambda,t}$ and $\tr_t \bar{u} \in \cH$ is well defined. 
Lemma \ref{lemma:trprop} implies $\{x_i\}$ is bounded, so there is a weakly convergent subsequence $x_i \rightharpoonup \bar{x}$ in $\cH$. Since weak limits are unique and $\tr_t u_i \rightharpoonup \tr_t \bar{u}$ (cf. the proof of Lemma \ref{Carleman}), we have that $\bar{x} = \tr_t \bar{u}$. We also have $\|x_i - P x_i\|_{\cH} \ra 0$, hence $\bar{x} \in \nu$. Finally, since each $x_i \in (\mu \cap \nu)^{\bot}$, the weak convergence $x_i \rightharpoonup \bar{x}$ implies $\bar{x} \in (\mu \cap \nu)^{\bot}$ and we conclude that $\bar{x} = 0$. By Lemma \ref{Carleman} this implies $\bar{u} = 0$, a contradiction. This completes the proof of \eqref{distbound}.

Recalling that $P$ and $\widehat{P}$ are the orthogonal projections onto $\nu$ and $\mu \cap \nu$, and letting $x \in \mu$, we thus have
\begin{align*}
	\text{dist}(x,\mu \cap \nu) &= \|x - \widehat{P} x\|_{\cH} \\
	& \leq K \| (x - \widehat{P} x) - P(x - \widehat{P} x)\|_{\cH} \\
	&= K \| x - Px \|_{\cH}
\end{align*}
where in the last equality we have used the fact that $P \widehat{P} = \widehat{P}$ because $\mu \cap \nu \subset \nu$. Referring to \eqref{gammadef}, we have shown that $\kappa \geq K^{-1} > 0$, hence $\mu + \nu$ is closed.

\end{proof}

We conclude with the proof of Proposition \ref{intersection}, first proving a simple lemma about the Dirichlet trace restricted to a subspace of $H^1(\Omega)$.  

\begin{lemma} \label{traceX} Let $\cX \subset H^1(\Omega)$ be a subspace that contains $H^1_0(\Omega)$, and suppose $u \in H^1(\Omega)$. Then $\gamma u \in \gamma(\cX)$ if and only if $u \in \cX$.
\end{lemma}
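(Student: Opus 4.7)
The forward implication is immediate: if $u \in \cX$, then $\gamma u \in \gamma(\cX)$ by definition. So the content of the lemma lies in the reverse direction, and the plan is essentially a one-line linear-algebra argument once the right characterization of $H^1_0(\Omega)$ is in hand.

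Suppose $\gamma u \in \gamma(\cX)$. Then by definition there exists some $w \in \cX$ with $\gamma w = \gamma u$, so that $\gamma(u - w) = 0$ in $H^{1/2}(\partial\Omega)$. The key input is the standard fact (for Lipschitz domains, see e.g.\ Theorem 3.40 of \cite{M00}) that
\[
H^1_0(\Omega) = \ker\bigl(\gamma \colon H^1(\Omega) \to H^{1/2}(\partial\Omega)\bigr),
\]
which yields $u - w \in H^1_0(\Omega)$. Since by hypothesis $H^1_0(\Omega) \subset \cX$ and $\cX$ is a linear subspace, we conclude $u = w + (u-w) \in \cX$.

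There is no real obstacle here; the only thing to be careful about is invoking the correct characterization of $H^1_0(\Omega)$ as the kernel of the Dirichlet trace, which requires the Lipschitz boundary hypothesis already in force throughout the paper. All other steps are just the subspace property of $\cX$ together with the inclusion $H^1_0(\Omega) \subset \cX$ assumed in the statement.
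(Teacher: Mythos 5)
Your proof is correct and is essentially identical to the paper's: take $w \in \cX$ with $\gamma w = \gamma u$, note $u - w \in \ker\gamma = H^1_0(\Omega) \subset \cX$, and conclude $u \in \cX$. The explicit citation of the trace-kernel characterization of $H^1_0(\Omega)$ is a nice touch the paper leaves implicit, but the argument is the same.
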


\begin{proof} Suppose $\gamma u \in \gamma(\cX)$, so there exists $w \in \cX$ with $\gamma u = \gamma w$, hence $\gamma(u-w) = 0$. This implies $u - w \in H^1_0(\Omega) \subset \cX$, so $u = (u-w) + w \in \cX$.
\end{proof}

\begin{proof}[Proof of Proposition \ref{intersection}]
First suppose there exists a nonzero function $u \in \cD(L_{\cX,t})$ with $L_{\cX,t} u = \lambda u$. Then $D_t(u,v) = \lambda \left<u,v\right>_{L^2(\Omega)}$ for all $v \in \cX$, hence for all $v \in H^1_0(\Omega)$, and so $u \in K_{\lambda,t}$. From \eqref{weakGreen} we find
\[
	\int_{\pO} ( B_t u)(\gamma v) d\mu = 0
\]
for all $v \in \cX$, which implies $\tr_t u = (\gamma u, B_t u) \in \mu(\lambda,t) \cap \nu$. It follows from Lemma \ref{Carleman} that $\mu(\lambda,t) \cap \nu \neq \{0\}$.

Now suppose that $\mu(\lambda,t) \cap \nu \neq \{0\}$. By definition, there exists $u \in K_{\lambda,t}$ with nonvanishing trace $\tr_t u \in \mu(\lambda,t) \cap \nu$. Since $\tr_t u \in \nu$ we have $\gamma u \in \gamma(\cX)$, hence $u \in \cX$ by Lemma \ref{traceX}. We also have from the definition of $\nu$ that
\[
	\int_{\pO} ( B_t u) (\gamma v) d\mu = 0,
\]
 and hence
\[
	D_t(u,v) = \left<L_t u,v\right>_{L^2(\Omega)},
\]
for all $v \in \cX$. It follows that $u \in \cD(L_{\cX,t})$ and $L_{\cX,t} u = L_t u = \lambda u$.
\end{proof}

%%%%%%%%%%%%%%%%%%%%%%%%%%%%%%%%%%%%%%%%
%%%%%%%%%%%%%%%%%%%%%%%%%%%%%%%%%%%%%%%%
%%%%%%%%%%%%%%%%%%%%%%%%%%%%%%%%%%%%%%%%
%%%%%%%%%%%%%%%%%%%%%%%%%%%%%%%%%%%%%%%%
%%%%%%%%%%%%%%%%%%%%%%%%%%%%%%%%%%%%%%%%

\section{Proof of Theorem \ref{thm:main}}
\label{sec:proof}
We now prove the main theorem. As in \cite{DJ11}, this follows from the homotopy invariance of the Maslov index, along with a monotonicity computation and a uniform lower bound on the eigenvalues of $L_{\cX,t}$.

For any fixed $\lambda_0 < 0$, $\mu(\lambda,t)$ defines a homotopy $[\lambda_0,0] \times [a,b] \rightarrow \mathcal{F} \Lambda_{\nu}(\mathcal{H})$, hence
\begin{align}
	\Mas(\mu(\lambda,a); \nu) + \Mas(\mu(0,t); \nu) = \Mas(\mu(\lambda_0,t); \nu) + \Mas(\mu(\lambda,b); \nu).
\label{eqn:htpy}
\end{align}
To prove Theorem \ref{thm:main} we analyze each term in the above equation.

\begin{lemma} There exists a constant $\lambda_0 < 0$ such that $\mu(\lambda,t) \cap \nu = \{0\}$ for all $t \in [a,b]$ and $\lambda \leq \lambda_0$.
\label{lemma:semibound}
\end{lemma}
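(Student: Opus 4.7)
The plan is to invoke Proposition \ref{intersection} to convert the assertion about $\mu(\lambda,t)\cap\nu$ into an assertion about the spectrum of $L_{\cX,t}$: the intersection is trivial precisely when $\lambda$ is not an eigenvalue of $L_{\cX,t}$. So it suffices to produce a constant $\lambda_0<0$ that lies below the spectrum of $L_{\cX,t}$ for every $t\in[a,b]$ simultaneously, which will follow from a uniform lower semibound on the family of Dirichlet forms $\{D_t\}$.

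Concretely, I would first establish \emph{uniform coercivity}: there exist constants $c_1>0$ and $c_2\in\bbR$, independent of $t\in[a,b]$, such that
\begin{equation*}
    D_t(u,u) \geq c_1 \|u\|_{H^1(\Omega)}^2 - c_2 \|u\|_{L^2(\Omega)}^2
\end{equation*}
for every $u\in H^1(\Omega)$. For each fixed $t$ this is just the coercivity of $D_t$ coming from strong ellipticity. To upgrade to a $t$-independent estimate I would change variables from $\Omega$ to $\Omega_t$ via $\varphi_t$, noting that by the $C^0$ assumption on $\{\varphi_t\}$ the Jacobians $D\varphi_t$ lie in a bounded subset of $L^\infty(\Omega,\bbR^{n\times n})$ with a uniform lower bound on $|\det D\varphi_t|$; combined with the strong ellipticity and continuity of the coefficients of $D$ on the compact set $\overline{\bigcup_t \Omega_t}$, one extracts uniform-in-$t$ constants.

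Given uniform coercivity, the second step is immediate: if $\lambda$ is an eigenvalue of $L_{\cX,t}$ with eigenfunction $u\in\cD(L_{\cX,t})\subset\cX$, then $D_t(u,u)=\lambda\|u\|_{L^2(\Omega)}^2$, so
\begin{equation*}
    \lambda \|u\|_{L^2(\Omega)}^2 \geq c_1 \|u\|_{H^1(\Omega)}^2 - c_2 \|u\|_{L^2(\Omega)}^2 \geq -c_2 \|u\|_{L^2(\Omega)}^2,
\end{equation*}
forcing $\lambda \geq -c_2$. Choosing any $\lambda_0<\min(0,-c_2)$, we have that $\lambda_0$ lies in the resolvent set of $L_{\cX,t}$ for every $t\in[a,b]$, and the same is true for every $\lambda\leq\lambda_0$. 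By Proposition \ref{intersection} this gives $\mu(\lambda,t)\cap\nu=\{0\}$ for all such $\lambda$ and $t$.

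The only delicate point in this argument is making the coercivity constants truly uniform in $t$; the danger is that, even though each $D_t$ is coercive, the constants could in principle degenerate as $t$ varies. This is the main obstacle, but it is handled by the regularity assumptions on the family $\{\varphi_t\}$ (continuity of $\varphi_t$ and $D\varphi_t$ in $L^\infty$, with $\varphi_t$ a Lipschitz diffeomorphism for each $t$) together with the continuity of the coefficients of $D$ and the compactness of $[a,b]$, all of which are part of the hypotheses of Theorem \ref{thm:main}.
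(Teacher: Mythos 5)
Your proposal is correct and follows essentially the same route as the paper: reduce via Proposition \ref{intersection} to a uniform lower semibound on the spectrum of $L_{\cX,t}$, which the paper likewise obtains from a $t$-independent coercivity estimate for $D_t$ using the continuity of the rescaled coefficients in $t$ and the compactness of $[a,b]$ (cf.\ the proof of Proposition \ref{Dform}).
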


In other words, the operators $L_{\cX,t}$ have eigenvalues bounded uniformly below for $t \in [a,b]$, so we can choose $\lambda_0$ to ensure $\Mas(\mu(\lambda_0,t); \nu) = 0$.

\begin{proof} By Proposition \ref{intersection} it suffices to show that $D_t(u,u) \geq C \|u\|_{L^2(\Omega)}$ for all $u \in H^1(\Omega)$ and $t \in [a,b]$, where $C \in \bbR$ is independent of $t$. This follows from the continuity of the coefficients of $D_t$ with respect to $t$ and the compactness of the interval $[a,b]$ (cf. the proof of Proposition \ref{Dform} in Appendix \ref{forms}).
\end{proof}

The following lemma, along with \eqref{eqn:htpy}, completes the proof of Theorem \ref{thm:main}.

\begin{lemma} If $t_0 \in [a,b]$, then $\Mas(\mu(\lambda,t_0); \nu) = -\Mor(L_{\cX,t_0})$.
\label{lemma:morse}
\end{lemma}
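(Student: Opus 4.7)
The plan is to compute $\Mas(\mu(\lambda, t_0); \nu)$ on the segment $\lambda \in [\lambda_0, 0]$ by identifying the crossings of the $\lambda$-path with $\nu$ and computing the associated crossing forms from Appendix \ref{app:Maslov}. By Proposition \ref{intersection}, these crossings correspond bijectively to eigenvalues of $L_{\cX, t_0}$ in $[\lambda_0, 0]$; by Proposition \ref{elliptic} there are only finitely many such eigenvalues, and by Lemma \ref{lemma:semibound} none of them equals $\lambda_0$. Consequently the crossings lying in $[\lambda_0, 0)$ account exactly for the strictly negative spectrum of $L_{\cX, t_0}$, whose total multiplicity is $\Mor(L_{\cX, t_0})$.

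At each crossing $\lambda_*$, I would extend an eigenfunction $u \in \ker(L_{\cX, t_0} - \lambda_*)$ to a smooth family $u_\lambda \in K_{\lambda, t_0}$ with $u_{\lambda_*} = u$, using the Robin parametrization from the preceding smoothness proof of $\mu$: write $\tr_{t_0}(u) = (f_*, g_*) + J A(\lambda_*, t_0)(f_*, g_*)$ with $(f_*, g_*) \in \rho$, define $x_\lambda = (f_*, g_*) + J A(\lambda, t_0)(f_*, g_*) \in \mu(\lambda, t_0)$, and recover $u_\lambda \in K_{\lambda, t_0}$ from $x_\lambda$ via the bounded inverse of $\tr_{t_0}$ supplied by Lemma \ref{Carleman}. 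Differentiating the weak identity $D_{t_0}(u_\lambda, v) - \lambda \langle u_\lambda, v\rangle_{L^2(\Omega)} = \int_{\pO}(B_{t_0} u_\lambda)(\gamma v)\, d\mu$ in $\lambda$ at $\lambda_*$ with $v = u$, and subtracting the undifferentiated identity at $\lambda_*$ with test function $\dot u_\lambda|_{\lambda_*}$, the bilinear-form and $L^2$ inner-product contributions cancel by the symmetry of $D_{t_0}$, leaving
\begin{equation*}
\omega\bigl(\tr_{t_0}(u),\, \tr_{t_0}(\dot u_\lambda)|_{\lambda_*}\bigr) = -\|u\|_{L^2(\Omega)}^2.
\end{equation*}
After polarization in $u$, this identifies the crossing form on $\mu(\lambda_*, t_0) \cap \nu$ as a negative definite quadratic form, so each crossing is regular and its signature equals $-\dim\ker(L_{\cX, t_0} - \lambda_*)$.

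Applying the crossing form formula with the Maslov index convention of Appendix \ref{app:Maslov} (the same convention behind the $[a, b)$ summation in Corollary \ref{cor:Dir}, with the left endpoint included and the right excluded), the signatures sum to
\begin{equation*}
\Mas(\mu(\lambda, t_0); \nu) = -\sum_{\lambda_* \in [\lambda_0, 0)} \dim \ker(L_{\cX, t_0} - \lambda_*) = -\Mor(L_{\cX, t_0}),
\end{equation*}
as desired.

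The hard part is the crossing-form calculation itself: one needs the extension $u_\lambda$ to be genuinely smooth in $H^1(\Omega)$ and to parametrize $\mu(\lambda, t_0)$ near $\lambda_*$, and one must justify the formal differentiation in $\lambda$ without any $H^2$ regularity hypothesis on $u_\lambda$. The Robin parametrization together with Proposition \ref{Dform} and Lemma \ref{lemma:trprop} provide $\dot u_\lambda$ in $H^1(\Omega)$ and $B_{t_0} \dot u_\lambda$ in $H^{-1/2}(\pO)$, after which the cancellations in the weak identity \eqref{weakGreen} are automatic. Keeping the Maslov index sign and endpoint conventions consistent with those used elsewhere in the paper is a subsidiary but necessary piece of bookkeeping.
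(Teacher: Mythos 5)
Your proposal is correct and follows essentially the same route as the paper: both arguments compute the Maslov index of the $\lambda$-path via crossing forms, lift a crossing vector $x_\lambda\in\mu(\lambda,t_0)$ to a differentiable family $u_\lambda\in K_{\lambda,t_0}$ (you via the Robin-graph parametrization plus Lemma \ref{Carleman}, the paper citing Lemma \ref{Carleman} directly), and differentiate the weak identity \eqref{weakGreen} to obtain $Q(x_{\lambda_*},x_{\lambda_*})=-\|u_{\lambda_*}\|_{L^2(\Omega)}^2<0$, so the path is negative definite and the crossings in $[\lambda_0,0)$ sum to $-\Mor(L_{\cX,t_0})$ by Lemma \ref{lemma:semibound} and Proposition \ref{intersection}.
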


\begin{proof} Since the path $\lambda \mapsto \mu(\lambda,t_0)$ is smooth, we can determine its Maslov index using crossing forms. We claim that the path is negative definite (as defined in Appendix \ref{app:Maslov}) hence
\begin{align*}
	\Mas(\mu(\lambda,t_0); \nu) &= -\sum_{\lambda_0 \leq \lambda < 0} \dim \left[ \mu(\lambda,t_0) \cap \nu \right] \\
	&= -\sum_{\lambda < 0} \dim \left[ \mu(\lambda,t_0) \cap \nu \right] \\
	&= - \Mor(L_{\cX,t_0}),
\end{align*}
where in the last two equalities we have used Lemma \ref{lemma:semibound} and Proposition \ref{intersection}, respectively.

To prove the claimed monotonicity, we assume there is a crossing at $\lambda_*$, so there exists a path $\{x_{\lambda}\}$ in $\cH$ with $x_{\lambda} \in \mu(\lambda,t_0)$ for $|\lambda - \lambda_*| \ll 1$ and $x_{\lambda_*} \in \nu$. By Lemma \ref{Carleman} there is a path $\{u_{\lambda}\}$ in $H^1(\Omega)$ such that $\tr_{t_0} u_{\lambda} = x_{\lambda}$. Differentiating the equation $D_{t_0}(u_{\lambda},v) = \lambda \left<u_{\lambda},v\right>_{L^2(\Omega)}$ with respect to $\lambda$ and letting $' = \frac{d}{d\lambda}$, we find
\[
	D_{t_0}(u'_{\lambda},v) = \left<\lambda u'_{\lambda} + u_{\lambda},v\right>_{L^2(\Omega)}
\]
for all $v \in H^1_0(\Omega)$, so \eqref{weakGreen} implies
\begin{align*}
	D_{t_0}(u'_{\lambda},u_{\lambda}) &= \left<\lambda u'_{\lambda} + u_{\lambda},u_{\lambda} \right>_{L^2(\Omega)} + \int_{\pO} \left( B_{t_0} u'_{\lambda} \right)u_{\lambda} d\mu,  \\
	D_{t_0}(u_{\lambda},u'_{\lambda}) &= \left<\lambda u_{\lambda},u'_{\lambda} \right>_{L^2(\Omega)} + \int_{\pO} \left( B_{t_0} u_{\lambda} \right) u'_{\lambda} d\mu .
\end{align*}
Since $D_{t_0}$ is symmetric, we obtain
\begin{align*}
	Q(x_{\lambda_*},x_{\lambda_*}) &= \left. \omega \left( \tr_{t_0} u_{\lambda}, \tr_{t_0} u'_{\lambda} \right) \right|_{\lambda=\lambda_*} \\
	& = - \|u_{\lambda_*}\|_{L^2(\Omega)}^2,
\end{align*}
which is negative because $u_{\lambda_*}$ is not identically zero.
\end{proof}

%%%%%%%%%%%%%%%%%%%%%%%%%%%%%%%%%%%%%%%%
%%%%%%%%%%%%%%%%%%%%%%%%%%%%%%%%%%%%%%%%
%%%%%%%%%%%%%%%%%%%%%%%%%%%%%%%%%%%%%%%%
%%%%%%%%%%%%%%%%%%%%%%%%%%%%%%%%%%%%%%%%
%%%%%%%%%%%%%%%%%%%%%%%%%%%%%%%%%%%%%%%%

\section{The crossing form}
\label{sec:mono}

Having completed the proof of Theorem \ref{thm:main}, we study the Maslov index on the left-hand side of \eqref{eqn:MM} in greater detail. This is a signed count of the conjugate times in $[a,b]$, with the sign depending on the direction in which the subspace $\mu(0,t)$ passes though $\nu$. This is intimately related to the motion of the eigenvalues of $L_{\cX,t}$ with respect to $t$, which depends nontrivially on the boundary conditions. We elucidate this dependence by computing crossing forms for the Dirichlet and Robin problems introduced in Section \ref{sec:results}, corresponding to the spaces $\cX^0 = H^1_0(\Omega)$ and $\cX^1 = H^1(\Omega)$.

We assume throughout that each $\Omega_t$ has $C^{1,1}$ boundary and the coefficients of $D$ are continuously differentiable on $\overline{\cup_{a \leq t \leq b} \Omega_t}$. (This is the true under the hypotheses of either Corollary \ref{cor:Dir} or \ref{cor:Neumannpotential}.) By Lemma \ref{lemmaDtreg} the coefficients of $D_t$ are contained in $C^1([a,b],L^\infty(\Omega))$, and Theorem 4.18 of \cite{M00} implies that if $u \in \ker L_{\cX,t}$, then $u \circ \varphi_t^{-1} \in H^2(\Omega_t)$.

\subsection{The general framework}

We start with some computations that are valid for any boundary conditions, letting $D_t'$ denote the derivative of the form $D_t$ with respect to $t$, so that
\[
	\frac{d}{dt} D_t(u_t,v_t) = D_t'(u_t,v_t) + D_t(u_t',v_t) + D_t(u_t,v_t')
\]
when $u_t,v_t$ are differentiable paths in $H^1(\Omega)$.

\begin{lemma} Suppose $U \subset [a,b]$ is open and $u_t \in C^1(U, H^1(\Omega))$. If $u_t \in K_{0,t}$ for each $t \in U$, then
\begin{align} \label{crossing1}
	\omega \left( \tr_t u_t, (\tr_t u_t)'  \right) = D_t'(u_t,u_t),
\end{align}
where $' = d/dt$.
\end{lemma}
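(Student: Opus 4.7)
The plan is to convert the weak identity $D_t(u_t,v)=0$ into a pure boundary identity, differentiate in $t$, and then recognize the resulting boundary integral as the symplectic form.

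First I would observe that since $u_t\in K_{0,t}\subset H^{1,0}_{L_t}(\Omega)$ (as noted before Lemma \ref{lemma:trprop}), the weak solution $u_t$ satisfies $L_t u_t = 0$ in $L^2(\Omega)$. Therefore the Green-type identity \eqref{weakGreen} reduces, for this particular $u_t$ and an arbitrary fixed test function $w\in H^1(\Omega)$, to
\[
    D_t(u_t,w) = \int_{\pO}(B_t u_t)(\gamma w)\,d\mu.
\]
Next, fix $t_0\in U$ and differentiate this identity in $t$ at $t_0$, holding $w$ fixed. Using the chain rule stated just before the lemma and the fact that, by Lemma \ref{lemma:trprop}, $B_t u_t$ depends $C^1$ on $t$ with values in $H^{-1/2}(\pO)$ (so differentiation under the boundary integral is legitimate), this yields
\[
    D_{t_0}'(u_{t_0},w) + D_{t_0}(u'_{t_0},w) = \int_{\pO}\Bigl.\tfrac{d}{dt}(B_t u_t)\Bigr|_{t_0}(\gamma w)\,d\mu,
\]
where $u'_{t_0}$ denotes $\frac{d}{dt}u_t\big|_{t_0}\in H^1(\Omega)$.

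Then I would specialize to $w=u_{t_0}$. The second term on the left can be rewritten using the symmetry of $D_{t_0}$ and a second application of the reduced Green identity (with the roles of $u$ and $v$ swapped and $v=u'_{t_0}$ plugged in), giving $D_{t_0}(u'_{t_0},u_{t_0})=D_{t_0}(u_{t_0},u'_{t_0})=\int_{\pO}(B_{t_0}u_{t_0})(\gamma u'_{t_0})\,d\mu$. Subtracting this from the displayed equation produces
\[
    D_{t_0}'(u_{t_0},u_{t_0}) = \int_{\pO}\left[(\gamma u_{t_0})\Bigl.\tfrac{d}{dt}(B_tu_t)\Bigr|_{t_0} - (B_{t_0}u_{t_0})(\gamma u'_{t_0})\right]d\mu.
\]

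Finally, I would simply read off the right-hand side from the definition of $\omega$: since $(\tr_t u_t)' = (\gamma u'_{t_0},\,\frac{d}{dt}(B_t u_t)|_{t_0})$, the integrand is exactly the symplectic pairing of $\tr_{t_0}u_{t_0}$ with $(\tr_t u_t)'|_{t_0}$, establishing \eqref{crossing1}. The only point requiring any care is the legitimacy of differentiating $B_t u_t$ in $t$, but this is precisely the content of Lemma \ref{lemma:trprop}, so there is no essential obstacle.
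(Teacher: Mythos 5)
Your proposal is correct and follows essentially the same route as the paper: both reduce the weak identity to the boundary identity $D_t(u_t,v)=\int_{\pO}(B_tu_t)(\gamma v)\,d\mu$, differentiate in $t$ with the test function held fixed, evaluate at $v=u_t$, and combine with $D_t(u_t,u_t')=\int_{\pO}(B_tu_t)(\gamma u_t')\,d\mu$ and the symmetry of $D_t$. The only difference is presentational (you justify differentiating the boundary pairing via Lemma \ref{lemma:trprop} explicitly, which the paper leaves implicit).
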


\begin{proof}
From the definition of $\omega$ we have
\begin{align*}
	\omega \left( \tr_t u_t, (\tr_t u_t)'  \right) &= \int_{\pO} \left[ (B_t u_t)' \gamma u_t - (B_t u_t)\gamma u_t' \right] d\mu.
\end{align*}
Recalling that $D_t(u_t,v) = \int_{\pO} (B_t u_t) (\gamma v) d \mu$ for all $v \in H^1(\Omega)$, we differentiate with respect to $t$ and then evaluate at $v = u_t$ to find
\[
	D_t'(u_t,u_t) + D_t(u_t',u_t) = \int_{\pO} (B_t u_t)'(\gamma u_t) d\mu.
\]
We also have
\[
	D_t(u_t,u_t') = \int_{\pO} (B_t u_t)(\gamma u_t') d\mu
\]
and the result follows from the symmetry of $D_t$.

\end{proof}

It thus remains to compute $D_t'(u_t,u_t)$ when $t$ is a conjugate time. We start by writing the Dirichlet form $D$ abstractly as
\begin{align} \label{Dabs}
	D(u,u) = \int_{\Omega} F(u,\nabla u).
\end{align}

\begin{prop} \label{prop:crossing}
Suppose $t_* \in [a,b]$ is a conjugate time, with $u_{t_*} \in \ker L_{\cX,t_*}$. Let $\widehat{u} = u_{t_*} \circ \varphi_{t_*}^{-1}$ and $x_* = \tr_{t_*} u_{t_*}$. Then the crossing form satisfies
\begin{align} \label{crossing}
	Q(x_*,x_*) = \int_{\pOt} \left[F \left(\widehat{u},\nabla \widehat{u} \right)(X \cdot N_t)  - 2 (\widehat{B}_t \widehat{u})(X\widehat{u}) \right]d\mu_t
\end{align}
where $X = \varphi_t'$, $N_t$ is the outward unit normal to $\pOt$, $d\mu_t$ is the induced volume form on $\partial \Omega_t$ and $\widehat{B}_t $ is the boundary operator defined in \eqref{Bhatdef}.
\end{prop}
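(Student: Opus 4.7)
By the preceding lemma, the crossing form at $t_*$ evaluated on $x_* = \tr_{t_*} u_{t_*}$ is $Q(x_*,x_*) = D_t'(u_t,u_t)|_{t=t_*}$ for any $C^1$ extension $u_t \in K_{0,t}$ of $u_{t_*}$. Since the prime here denotes the partial $t$-derivative of the bilinear form (with its arguments held fixed), this reduces to $\frac{d}{dt}|_{t_*} D_t(u_{t_*},u_{t_*})$. The plan is to transfer this derivative to a moving-domain integral on $\Omega_t$ and combine a Reynolds-type formula with Green's identity \eqref{Bhatdef}.

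\textbf{Execution.} Setting $\widehat{u}_t := u_{t_*} \circ \varphi_t^{-1}$ (so $\widehat{u}_{t_*} = \widehat{u}$), the definition \eqref{Dtdef} gives $D_t(u_{t_*},u_{t_*}) = \int_{\Omega_t} F(\widehat{u}_t, \nabla \widehat{u}_t)\, dx$. Differentiating $\varphi_t^{-1}$ using $\partial_t \varphi_t^{-1} = -D\varphi_t^{-1}\,(X \circ \varphi_t^{-1})$ and combining with the chain rule $\nabla \widehat{u}_t = (D\varphi_t^{-1})^{T}(\nabla u_{t_*})\circ \varphi_t^{-1}$ yields $\partial_t|_{t_*} \widehat{u}_t = -X \widehat{u}$ on $\Omega_{t_*}$, with $X$ abusively denoting its push-forward $X \circ \varphi_{t_*}^{-1}$. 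Applying the Reynolds transport formula (the boundary of $\Omega_t$ moves with velocity $X$) then gives
\begin{equation*}
\frac{d}{dt}\bigg|_{t_*} \int_{\Omega_t} F(\widehat{u}_t,\nabla \widehat{u}_t)\, dx = \int_{\Omega_{t_*}} \partial_t|_{t_*} F(\widehat{u}_t,\nabla\widehat{u}_t)\, dx + \int_{\partial\Omega_{t_*}} F(\widehat{u},\nabla\widehat{u})(X\cdot N_t)\, d\mu_t,
\end{equation*}
which already contributes the first term of \eqref{crossing}. For $F(u,p) = a^{ij}(x) p_i p_j + c(x) u^2$ corresponding to \eqref{eqn:L}, symmetry of $a^{ij}$ shows that $\partial_t|_{t_*} F(\widehat{u}_t,\nabla \widehat{u}_t)$ equals exactly twice the pointwise integrand of $D|_{\Omega_{t_*}}(\widehat{u}, \partial_t|_{t_*} \widehat{u}_t)$. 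Since $\widehat{u} \in H^2(\Omega_{t_*})$ with $L\widehat{u}=0$ strongly (by the regularity cited at the opening of the section) and $-X\widehat{u} \in H^1(\Omega_{t_*})$, Green's identity \eqref{Bhatdef} then reduces the bulk integral to $-2\int_{\partial\Omega_{t_*}} (\widehat{B}_t \widehat{u})(X\widehat{u})\, d\mu_t$, which is the second term of \eqref{crossing}.

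\textbf{Principal obstacle.} The delicate point is the Reynolds transport identity on a moving Lipschitz domain together with the chain-rule expression for $\partial_t|_{t_*} \widehat{u}_t$---both classical under smoothness in $x$ but not obvious when $\{\varphi_t\}$ is only $C^1$ into $L^\infty$. In the applications of interest (Corollaries \ref{cor:Dir} and \ref{cor:Neumannpotential}) the boundaries are $C^{1,1}$ and $X$ is Lipschitz in $x$, so $-X\widehat{u} \in H^1(\Omega_{t_*})$ and both identities hold verbatim; in greater generality an approximation argument from smooth data would be used.
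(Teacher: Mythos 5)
Your argument is correct and follows essentially the same route as the paper: reduce $Q(x_*,x_*)$ to $D_t'(u_{t_*},u_{t_*})$ via the preceding lemma, differentiate the moving-domain integral $\int_{\Omega_t}F(\widehat u_t,\nabla\widehat u_t)$ to get a bulk term $-2\,D|_{\Omega_{t_*}}(\widehat u, X\widehat u)$ plus the boundary term $\int F(X\cdot N_t)$, and convert the bulk term with Green's identity \eqref{Bhatdef} using $L\widehat u=0$. The only difference is cosmetic: the paper cites Henry's Hadamard-type differentiation formula where you re-derive it from the transport theorem and the identity $\partial_t\widehat u_t=-X\widehat u$, and you make explicit the final Green's-identity step and the $H^2$ regularity it requires, both of which the paper leaves implicit.
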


\begin{proof} From \eqref{Dabs} and the definition of $D_t$ we have
\begin{align*}
	D_t(u,u) &= \left. D\right|_{\Omega_t} (u \circ \varphi_t^{-1}, u \circ \varphi_t^{-1}) \\
	&= \int_{\Omega_t} F \left(u \circ \varphi_t^{-1},\nabla (u\circ \varphi_t^{-1}) \right).
\end{align*}
Differentiating and using Theorem 1.11 from \cite{H05} we obtain
\begin{align*}
	D_t'(u,u) =& -2 \left. D\right|_{\Omega_t} \left(X(u \circ \varphi_t^{-1}), u \circ \varphi_t^{-1} \right) \\
	&+ \int_{\pOt} F \left(u \circ \varphi_t^{-1},\nabla (u\circ \varphi_t^{-1}) \right) (X \cdot N_t) d\mu_t.
\end{align*}
Setting $t=t_*$ and $u = u_{t_*}$, the result follows.
\end{proof}

We now consider some specific examples for the operator $L = -\Delta + V(x)$.

\subsection{The Dirichlet ($\cX^0$) problem}
We use the Dirichlet form
\begin{align} \label{generalD}
	D(u,v) = \int_{\Omega} \left[ \nabla u \cdot \nabla v + Vuv \right],
\end{align}
which has rescaled boundary operator $\widehat{B}_t  =  \partial/\partial N_t$. Suppose that $t_*$ is a crossing time. With $x_*$, $u_{t_*}$ and $\widehat{u}$ as in Proposition \ref{prop:crossing} we have
\begin{align} \label{gencrossing}
	Q(x_*,x_*) = \int_{\pOt} \left[ \left( |\nabla \widehat{u}|^2 + V \widehat{u}^2 \right)(X \cdot N_t) - 2 X \widehat{u} \frac{\partial \widehat{u}}{\partial N_t} \right] d\mu_t.
\end{align}
Since $\widehat{u}$ vanishes on $\pOt$, this reduces to
\[
	Q(x_*,x_*) = \int_{\pOt} \frac{\partial \widehat{u}}{\partial N_t} \left[ (X \cdot N_t) \frac{\partial \widehat{u}}{\partial N_t}- 2 X \widehat{u} \right] d\mu_t.
\]
We decompose the velocity field $X$ into normal and tangential components, $X = X^{\top} + (X \cdot N_t) N_t$, and observe that
\begin{align*}
	X \widehat{u} = (X \cdot N_t) \frac{\partial \widehat{u}}{\partial N_t}
\end{align*}
because $X^{\top} \widehat{u} = 0$. It follows that
\begin{align} \label{Dcrossing}
	Q(x_*,x_*) = - \int_{\pOt} \left(\frac{\partial \widehat{u}}{\partial N_t} \right)^2 (X \cdot N_t) d\mu_t.
\end{align}

More generally, for the operator $L = -\partial_i (a^{ij} \partial_j) + c$, the same computation yields
\[
	Q(x_*,x_*) = - \int_{\pOt} a(N_t, N_t) \left(\frac{\partial \widehat{u}}{\partial N_t} \right)^2 (X \cdot N_t) d\mu_t,
\]
where $a(\cdot,\cdot)$ denotes the bilinear form corresponding to $a^{ij}$. In either case, we see that crossings for the Dirichlet problem are isolated and negative definite as long as $X \cdot N_t > 0$; the proof of Corollary \ref{cor:Dir} follows. Geometrically the condition $X \cdot N_t > 0$ means that $\Omega_t$ is moving outward as $t$ increases. If $X \cdot N_t$ changes sign on $\pOt$, then the signature of the crossing form is more difficult to determine, as it depends on the structure of $\partial \widehat{u} / \partial N_t$ on the boundary.

The expression for $Q$ given in \eqref{gencrossing} is valid for any boundary value problem corresponding to the Dirichlet form \eqref{generalD} (that is, for any choice of $\cX$). In particular, we can use this to compute crossing forms for the Neumann problem, as well as the $\cX^2$ and $\cX^3$ problems formulated above. The Robin boundary value problem requires a modification to the form and is considered in detail below.

\subsection{The Robin ($\cX^1$) problem}
We now consider the Dirichlet form
\[
	D(u,v) = \int_{\Omega} \left[ \nabla u \cdot \nabla v + \dv(uvY) + Vuv \right]
\]
where $Y$ is vector field of class $C^2$ (hence $\dv Y$ is $C^1$). This corresponds to $L = -\Delta + V$, with the boundary operator
\[
	\widehat{B}_t u = \frac{\partial u}{\partial N_t} + \beta_t u
\]
on $\Omega_t$, where we have defined $\beta_t = (Y|_{\pOt}) \cdot N_t$. Without loss of generality we may assume that $Y$ has no component tangential to $\pO_{t_*}$, hence $Y|_{\pO_{t_*}} = \beta_{t_*} N_{t_*}$.

Since $\cX^1 = H^1(\Omega)$ we have $\widehat{B}_t \widehat{u} = 0$ at a crossing time, so Proposition \ref{prop:crossing} yields
\[
	Q(x_*,x_*) = \int_{\pOt} \left[ |\nabla \widehat{u}|^2 + \dv (\widehat{u}^2 Y) +  V(y) \widehat{u}^2 \right] (X \cdot N_t) d\mu_t.
\]
Using the fact that $Y|_{\pOt} =\beta_t N_t$ to compute the second term explicitly, we obtain
\begin{align*}
	\dv (\widehat{u}^2 Y) &= \dv \left( \beta_t \widehat{u}^2 N_t \right) \\
	&= \frac{\partial \beta_t}{\partial N_t} \widehat{u}^2 + 2 \beta_t \widehat{u} \frac{\partial \widehat{u}}{\partial N_t} + \beta_t \widehat{u}^2 \dv N_t \\
	&= \left( \frac{\partial \beta_t}{\partial N_t} - 2 \beta_t^2 + \beta_t H_{\pOt} \right) \widehat{u}^2
\end{align*}
where we have used the fact that $\widehat{B}_t \widehat{u} = 0$, and the mean curvature is defined to be $H_{\pOt} = \dv N_t$. Decomposing $\nabla \widehat{u} = \nabla^{\top} \widehat{u} + \frac{\partial \widehat{u}}{\partial N_t}$ into tangential and normal components, then applying the boundary conditions, we have
\[
	|\nabla \widehat{u}|^2 = |\nabla^{\top} \widehat{u}|^2 + \beta_t^2 \widehat{u}^2,
\]
and so
\begin{align} \label{Rcrossing}
	Q(x_*,x_*) =  \int_{\pOt} \left[ |\nabla^{\top} \widehat{u}|^2+ \left( V - \beta_t^2 + \beta_t H_{\pOt} +  \frac{\partial \beta_t}{\partial N_t} \right) \widehat{u}^2 \right] (X \cdot N_t) d\mu_t.
\end{align}

This crossing form coincides with the formula for the first variation of a simple Robin eigenvalue in equation (4.3) of \cite{BW2014} and example 3.5 of \cite{H05}. (The computations in the proof of the latter reference agree perfectly with ours, but the final result on p. 40 contains an extra factor of $2$ on the $\p\beta_t / \p N_t$ term.) One advantage of the symplectic formulation is that it describes the change in the Morse index, rather than the individual eigenvalues, and hence is robust against multiplicities and degeneracies.

\subsection{The star-shaped case}

We finally revisit the star-shaped case for the Dirichlet and Neumann problems. With
\[
	D(u,v) = \int_{\Omega} \left[ \nabla u \cdot \nabla v + V(x)uv \right] dx
\]
as above, $\varphi_t(x) = tx$ and $\Omega_t = \{tx : x \in \Omega\}$, a simple computation shows that
\[
	D_t(u,v) = t^{n-2} \int_{\Omega} \left[ (\nabla u \cdot \nabla v) + t^2 V(tx) uv \right] dx
\]
and so
\begin{align*}
	D_t'(u,v) =& (n-2) t^{n-3} \int_{\Omega} \left[ (\nabla u \cdot \nabla v) + t^2 V(tx) uv \right] dx \\
	&+ t^{n-2} \int_{\Omega} \frac{d}{dt} \left[ t^2 V(tx) \right] uv \ dx.
\end{align*}
Evaluating at a solution $u_t$ to the equation $-\Delta u_t + t^2 V(tx) u_t = 0$ (i.e. $L_t u_t = 0$) we find that
\begin{align}
	D_t'(u_t,u_t) = (n-2) t^{n-3} \int_{\pO} u_t \frac{\partial u_t}{\partial N} d\mu
	+ t^{n-2} \int_{\Omega} u_t^2 \frac{d}{dt} \left[ t^2 V(tx) \right] dx.
\end{align}

In particular, for either Dirichlet or Neumann boundary conditions, we have
\begin{align}
\label{eqn:ss-mono1}
	D_t'(u_t,u_t) = t^{n-2} \int_{\Omega} u_t^2 \frac{d}{dt} \left[ t^2 V(tx) \right] dx.
\end{align}
Replacing $V(x)$ with $V(x) - \lambda$, this becomes
\begin{align}
\label{eqn:ss-mono2}
	D_t'(u_t,u_t) = t^{n-2} \int_{\Omega} u_t^2 \frac{d}{dt} \left[ t^2 V(tx) - t^2 \lambda \right] dx,
\end{align}
and we conclude that all crossings are negative definite provided
\[
	\frac{d}{dt} \left[ t^2 V(tx) - t^2 \lambda \right] < 0
\]
for all $x \in \Omega$. This is equivalent to \eqref{biglambda}, so Corollary \ref{cor:Neumannpotential} follows immediately.

%%%%%%%%%%%%%%%%%%%%%%%%%%%%%%%%%%%%%%%%
%%%%%%%%%%%%%%%%%%%%%%%%%%%%%%%%%%%%%%%%
%%%%%%%%%%%%%%%%%%%%%%%%%%%%%%%%%%%%%%%%
%%%%%%%%%%%%%%%%%%%%%%%%%%%%%%%%%%%%%%%%
%%%%%%%%%%%%%%%%%%%%%%%%%%%%%%%%%%%%%%%%

\appendix

\section{Selfadjoint operators and bilinear forms}
\label{forms}
In this appendix we review the correspondence between symmetric bilinear forms and selfadjoint operators described in Proposition \ref{elliptic}. While the result is standard, it is worth reviewing, as the constructions in the paper (in particular for the boundary space) rely on an explicit identification of the domain of the unbounded operator corresponding to a given form.

Our starting point is a symmetric bilinear form
\[
	D(u,v) = \int_{\Omega} \left[ a^{ij}(\partial_i u)(\partial_j v) + b^i (\partial_i u)v + b^i u (\partial_i v) + c uv \right]
\]
with real coefficients $a^{ij}, b^i, c \in L^{\infty}(\Omega)$ satisfying $a^{ij} = a^{ji}$. We assume that $D$ is strongly elliptic, so there exists a constant $\lambda_0 > 0$ such that
\[
	a^{ij}(x) \xi_i \xi_j \geq \lambda_0 | \xi |^2
\]
for all $x \in \Omega$ and $\xi \in \bbR^n$. Formally integrating by parts, we find
\begin{align}
	D(u,v) = \int_{\Omega} \left[ -\partial_i(a^{ij} \partial_j u)  + (c - \partial_i b^i) u\right] v 
	+ \int_{\pO} N_i \left( a^{ij} \partial_j u + b^i u\right) v\,d\mu
\end{align}
where $\{N_j\}$ are the components of the outward-pointing unit normal to $\pO$. The following weak version of Green's formula (Theorem 4.4 of \cite{M00}) justifies this computation.

\begin{lemma}\label{APPgreen}
Let $L = -\partial_i (a^{ij} \partial_j) + (c - \partial_i b^i)$ and $B = N_i (a^{ij} \partial_j + b^i)$. If $u,v \in H^1(\Omega)$ and $Lu \in L^2(\Omega)$, then
\[
	D(u,v) = \left<Lu,v\right>_{L^2(\Omega)} + \int_{\pO} (Bu)(\gamma v) d\mu.
\]
\end{lemma}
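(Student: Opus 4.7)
The plan is to define $Bu$ as an element of $H^{-1/2}(\pO)$ via duality and then to verify that the asserted Green's identity holds when the boundary integral is read as the corresponding duality pairing. The main subtlety is that, for $u \in H^1(\Omega)$ with only $Lu \in L^2(\Omega)$, neither $a^{ij}\partial_j u$ nor the formal conormal trace $N_i(a^{ij}\partial_j u + b^i u)$ is a bona fide function on $\pO$, so $Bu$ must be constructed indirectly.

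First I would establish the identity for $v \in H^1_0(\Omega)$, where the boundary term should vanish. For $v \in C_c^\infty(\Omega)$, integration by parts applied formally gives $D(u,v) = \langle Lu, v\rangle_{L^2(\Omega)}$, with $Lu$ interpreted distributionally on $\Omega$; the hypothesis $Lu \in L^2(\Omega)$ identifies this distribution with the given $L^2$ function. Both sides are continuous in $v \in H^1(\Omega)$ (the left side by $L^\infty$-bounds on the coefficients and Cauchy--Schwarz, the right side directly), so the identity extends by density to all $v \in H^1_0(\Omega)$.

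Next, using that $\gamma \colon H^1(\Omega) \to H^{1/2}(\pO)$ admits a bounded right inverse $E$ (valid for Lipschitz domains), I would define the functional
\[
\ell(\phi) := D(u, E\phi) - \langle Lu, E\phi\rangle_{L^2(\Omega)}, \qquad \phi \in H^{1/2}(\pO).
\]
The previous step shows $\ell$ is independent of the choice of extension: any two extensions of $\phi$ differ by an element of $H^1_0(\Omega)$, on which the bracket vanishes. The bound
\[
|\ell(\phi)| \leq C\bigl(\|u\|_{H^1(\Omega)} + \|Lu\|_{L^2(\Omega)}\bigr)\,\|\phi\|_{H^{1/2}(\pO)}
\]
follows from continuity of $D$ on $H^1 \times H^1$ and boundedness of $E$, so $\ell \in H^{1/2}(\pO)^* \cong H^{-1/2}(\pO)$; we take this element as the definition of $Bu$. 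Given any $v \in H^1(\Omega)$, we may use $v$ itself as an extension of $\gamma v$, and the definition of $\ell$ plus the integral-notation convention for the duality pairing yields exactly the claimed identity.

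It remains to reconcile this abstract $Bu$ with the formal expression $N_i(a^{ij}\partial_j u + b^i u)$. Testing on $u \in C^\infty(\overline{\Omega})$, the classical divergence theorem on the Lipschitz domain $\Omega$ (the outward unit normal existing $d\mu$-a.e. by Rademacher) gives
\[
D(u,v) - \langle Lu, v\rangle_{L^2(\Omega)} = \int_{\pO} N_i(a^{ij}\partial_j u + b^i u)(\gamma v)\,d\mu,
\]
so the two prescriptions for $Bu$ coincide on a class dense in the graph-norm space $H^{1,0}_L(\Omega)$, justifying the notation. The main obstacle I anticipate is precisely the independence-of-extension argument, which is where the hypothesis $Lu \in L^2(\Omega)$ is essential: without it, the bracket $\langle Lu, w\rangle$ for $w \in H^1_0(\Omega)$ would have to be read in the $H^{-1}$--$H^1_0$ duality, and the well-definedness of $Bu$ as an element of $H^{-1/2}(\pO)$ would fail in general.
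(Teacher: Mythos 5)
Your construction is correct and is precisely the argument behind the paper's ``proof,'' which consists of a citation to Theorem 4.4 of \cite{M00}: one defines $Bu$ as the element of $H^{-1/2}(\pO)$ representing the functional $\phi \mapsto D(u,E\phi) - \left<Lu, E\phi\right>_{L^2(\Omega)}$, which is well defined exactly because the bracket vanishes on $H^1_0(\Omega)$ once $Lu\in L^2(\Omega)$. The only point worth flagging is that your final reconciliation of this abstract $Bu$ with the classical expression $N_i(a^{ij}\partial_j u + b^i u)$ invokes density of $C^\infty(\overline{\Omega})$ in the graph space $H^{1,0}_L(\Omega)$, which on a Lipschitz domain requires its own (standard but not immediate) justification; nothing downstream in the paper depends on this identification, since $Bu$ is only ever used through the duality pairing.
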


%It is easily shown that such a $D$ is coercive and bounded over $H^1(\Omega)$, i.e. there exist constants $C_1,C_2 > 0$ and $\lambda \geq 0$ such that
%\begin{align} \label{coercivity}
%	D(u,u) \geq C_1 \|u\|^2_{H^1(\Omega)} - \lambda \|u\|^2_{L^2(\Omega)}
%\end{align}
%and
%\[
%	|D(u,v)| \leq C_2 \|u\|_{H^1(\Omega)} \|v\|_{H^1(\Omega)}
%\]
%for all $u,v \in H^1(\Omega)$.

We let $\cX$ be a closed subspace of $H^1(\Omega)$ that contains $H^1_0(\Omega)$, and view $D$ as an unbounded form on $L^2(\Omega)$ with domain $\cX$. Since $D$ is bounded and coercive over $H^1(\Omega)$, and hence over $\cX$, Theorem VIII.15 of \cite{RS72} (cf. Theorem VI.2.1 of \cite{K76}) implies there is a selfadjoint operator $L_{\cX}$, with domain
\[
	\cD(L_\cX) = \left\{u \in \cX : \exists w \in L^2(\Omega) \text{ with } \left<w,v\right>_{L^2(\Omega)} = D(u,v) \text{ for all } v \in \cX \right\},
\]
satisfying
\[
	D(u,v) = \left<L_{\cX}u,v\right>_{L^2(\Omega)}
\]
for all $u \in \cD(L_{\cX})$ and $v \in \cX$. We can identify the domain explicitly in terms of the operators $L$ and $B$.

\begin{lemma} Let $u \in \cX$. Then $u \in \cD(L_{\cX})$ if and only if $Lu \in L^2(\Omega)$ and
\[
	\int_{\pO} (Bu)(\gamma v) d\mu = 0
\]
for every $v \in \cX$.
\end{lemma}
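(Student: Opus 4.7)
The plan is to prove both implications by exploiting the weak Green's formula (Lemma \ref{APPgreen}) together with the density of test functions in $H^1_0(\Omega) \subset \cX$.

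For the ``if'' direction, suppose $u \in \cX$ satisfies $Lu \in L^2(\Omega)$ and the boundary integral $\int_{\pO} (Bu)(\gamma v)\,d\mu$ vanishes for every $v \in \cX$. Then Lemma \ref{APPgreen} immediately gives
\[
	D(u,v) = \left<Lu,v\right>_{L^2(\Omega)}
\]
for all $v \in \cX$. Setting $w := Lu \in L^2(\Omega)$, this is precisely the defining property of $\cD(L_\cX)$, so $u \in \cD(L_\cX)$ (with $L_\cX u = Lu$).

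For the ``only if'' direction, suppose $u \in \cD(L_\cX)$, so there exists $w \in L^2(\Omega)$ with $D(u,v) = \left<w,v\right>_{L^2(\Omega)}$ for every $v \in \cX$. First I would restrict to test functions $v \in C^\infty_0(\Omega)$, which are contained in $H^1_0(\Omega) \subset \cX$. For such $v$ the boundary term in Green's formula is absent, so $D(u,v) = \left<Lu,v\right>$ where $Lu$ is interpreted as a distribution. Hence $\left<Lu,v\right> = \left<w,v\right>$ in the sense of distributions for all $v \in C^\infty_0(\Omega)$, which forces $Lu = w$ as distributions, and therefore $Lu = w \in L^2(\Omega)$. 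Next, now that $Lu \in L^2(\Omega)$, Lemma \ref{APPgreen} applies to arbitrary $v \in \cX \subset H^1(\Omega)$, giving
\[
	\left<w,v\right>_{L^2(\Omega)} = D(u,v) = \left<Lu,v\right>_{L^2(\Omega)} + \int_{\pO} (Bu)(\gamma v)\,d\mu.
\]
Since $Lu = w$, the bulk terms cancel, leaving $\int_{\pO} (Bu)(\gamma v)\,d\mu = 0$ for all $v \in \cX$, as required.

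The proof is essentially a bookkeeping exercise once Green's formula is in hand; there is no real obstacle. The only subtle point is recognizing that the inclusion $H^1_0(\Omega) \subset \cX$ (hypothesized throughout) is what lets us deduce distributional identification of $Lu$ with $w$ before invoking the full Green's formula; without this containment one could not conclude $Lu \in L^2(\Omega)$ from the abstract form identity alone.
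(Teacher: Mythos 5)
Your proof is correct and follows essentially the same route as the paper's: the ``if'' direction is an immediate application of the weak Green's formula, and the ``only if'' direction first identifies $Lu = L_\cX u \in L^2(\Omega)$ by testing against $H^1_0(\Omega) \subset \cX$ and then applies Green's formula to kill the boundary term. Your writeup merely spells out the distributional identification step that the paper states in one line.
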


\begin{proof}
Suppose $u \in \cX$, with $Lu \in L^2(\Omega)$ and $\int_{\pO} (Bu)(\gamma v) d\mu = 0$
for all $v \in \cX$. Then Lemma \ref{APPgreen} implies $D(u,v) = \left<Lu,v\right>_{L^2(\Omega)}$ for all $v \in \cX$, hence $u \in \cD(L_\cX)$ and $L_\cX u = Lu$.

On the other hand, suppose $u \in \cD(L_\cX)$. Since $D(u,v) = \left<L_\cX u,v\right>_{L^2(\Omega)}$ for all $v \in H^1_0(\Omega)$, we have $Lu = L_\cX u \in L^2(\Omega)$. It follows from Lemma \ref{APPgreen} that
\[
	 \left<L_\cX u,v\right>_{L^2(\Omega)} = D(u,v) = \left<Lu, v\right>_{L^2(\Omega)} + \int_{\pO} (Bu)(\gamma v) d\mu 
\]
for all $v \in \cX$, hence
\[
	\int_{\pO} (Bu)(\gamma v) d\mu = 0
\]
as claimed.
\end{proof}

%%%%%%%%%%%%%%%%%%%%%%%%%%%%%%%%%%%%%%%%
%%%%%%%%%%%%%%%%%%%%%%%%%%%%%%%%%%%%%%%%
%%%%%%%%%%%%%%%%%%%%%%%%%%%%%%%%%%%%%%%%
%%%%%%%%%%%%%%%%%%%%%%%%%%%%%%%%%%%%%%%%
%%%%%%%%%%%%%%%%%%%%%%%%%%%%%%%%%%%%%%%%

\section{The Maslov index in symplectic Hilbert spaces}
\label{app:Maslov}
We next review the definitions and basic properties of symplectic Hilbert spaces, the Fredholm--Lagrangian Grassmannian, and the Maslov index. These will be our main tools in the proof of Theorem \ref{thm:main}. Unless stated otherwise, technical details can be found in \cite{F04}.

Let $H$ be a real, infinite-dimensional, separable Hilbert space, and $\omega\colon H \times H \rightarrow \mathbb{R}$ a bilinear, skew-symmetric form. If the map $x \mapsto \omega(x, \cdot)$ is an isomorphism $H \rightarrow H^*$ we say that $\omega$ is \textit{nondegenerate}, and call the pair $(H,\omega)$ a \textit{symplectic Hilbert space}. For example, if $E$ is a Hilbert space, we can set $H = E \oplus E^*$ and define
\begin{align*}
	\omega((x,\phi), (y,\psi)) = \psi(x) - \phi(y),
\end{align*}
which is easily seen to be nondegenerate.

A subspace $\mu \subset H$ is said to be \textit{isotropic} if $\omega(x,y) = 0$ for all $x,y \in \mu$. A \textit{Lagrangian} subspace is then defined to be a maximal closed, isotropic subspace of $H$. The set of all Lagrangian subspaces in $H$ is called the \textit{Lagrangian Grassmannian} and denoted by $\Lambda(H)$. Given the gap topology (where the distance between subspaces $\mu$ and $\nu$, with respective orthogonal projections $P_{\mu}$ and $P_{\nu}$, is the operator norm $\|P_{\mu} - P_{\nu} \|$), the Lagrangian Grassmannian becomes a smooth, contractible Banach manifold, locally equivalently to the space of bounded, selfadjoint operators on $H$.

If follows that any homotopy invariant $C^0(\mathbb{S}^1; \Lambda(H)) \rightarrow \mathbb{Z}$ is necessarily trivial. This differs from the finite-dimensional case, where we have $\pi_1(\Lambda(\mathbb{R}^{2n})) = \mathbb{Z}$. For this reason we must work with the Fredholm--Lagrangian Grassmannian, which is topologically nontrivial.

%Before doing so, we give a useful criteria for finding Lagrangian subspaces.
%
%\begin{lemma} Suppose $\mu \subset H$ is a closed isotropic subspace. If there exists an isotropic subspace $V \subset H$ such that $H = \mu + V$, then $\mu$ is Lagrangian.
%\label{lemma:lag}
%\end{lemma}
%
%\begin{proof} Since $\mu$ is closed and isotropic, it suffices to prove that it is maximal. Suppose this is not the case, so there exists an isotropic subspace $\nu \supset \mu$ and a nonzero element $z \in \nu \cap V$. Now for $x+y \in \mu + V$ we have
%\begin{align*}
%	\omega(x+y,z) = \omega(x,z) + \omega(y,z).
%\end{align*}
%The first term vanishes since $x,z \in \nu$, and the second term vanishes since $y,z \in V$. But $x+y \in H$ was arbitrary so this contradicts the nondegeneracy of $\omega$ on $H$.
%\end{proof}

We first introduce the notion of a \textit{Fredholm pair} in the Lagrangian Grassmannian. This is a pair of closed subspaces $\mu, \nu \in \Lambda(H)$ such that
\begin{enumerate}
	\item $\dim  (\mu \cap \nu) < \infty$, and
	\item $\mu + \nu$ is closed and of finite codimension in $H$.
\end{enumerate}
Then the \textit{Fredholm--Lagrangian Grassmannian} of $H$, with respect to fixed $\nu \in \Lambda(H)$, is the set
\begin{align*}
	\mathcal{F} \Lambda_{\nu} (H) = \{ \mu \in \Lambda(H) : (\mu, \nu) \textrm{ is a Fredholm pair} \}.
\end{align*}
This is an open subset of $\Lambda(H)$, and hence a smooth Banach manifold, with $\pi_1 (\mathcal{F} \Lambda_{\nu} (H)) \cong \mathbb{Z}$.

We conclude our review by defining the \textit{Maslov index} of a continuous path $\mu\colon [a,b] \rightarrow \mathcal{F} \Lambda_{\nu} (H)$. We define a continuous family of operators $\{S(t)\}$ by the formula
\begin{align*}
	S(t) = (2 P_{\mu(t)} - I) (2 P_{\nu} - I), 
\end{align*}
where $P$ denotes orthogonal projection onto the designated subspace. The operator $S(t)$ comprises reflection across the subspace $\nu$ followed by reflection across $\mu(t)$. There exist times $a = t_0  < t_1 < \cdots < t_N = b$ and positive constants $\epsilon_j \in (0,\pi)$ for $1 \leq j \leq N$ such that
\begin{enumerate}
	\item $e^{ \pm i \epsilon_j} \notin \sigma(S(t))$, and
	\item $\sum_{|\theta| \leq \epsilon_j} \dim \ker \left(S(t) - e^{i \theta} \right) < \infty$
\end{enumerate}
for all $t \in [t_{j-1}, t_j]$. Intuitively this means that as $t$ ranges from $t_{j-1}$ to $t_j$, the number of eigenvalues of $S(t)$ in the arc $\{ |\theta| \leq \epsilon_j \} \subset \mathbb{S}^1$ is constant and finite.

Following \cite{BF98} and \cite{P96}, we define the Maslov index by the formula
\[
	 \Mas(\mu(t); \nu) =  \sum_{j=1}^N \sum_{0 \leq \theta \leq \epsilon_j} \left[ \dim \ker \left(S(t_j) - e^{i \theta} \right) - \dim \ker \left(S(t_{j-1}) - e^{i \theta} \right) \right].
\]
This gives a count (with sign and multiplicity) of the eigenvalues of $S(t)$ that pass through the point $1 \in \mathbb{S}^1$ in a counterclockwise direction as $t$ increases from $a$ to $b$.

To compute the Maslov index in practice, we make frequent use of crossing forms---see \cite{RS93} for the general theory and \cite{RS95} for an application to first-order, elliptic operators. Suppose $\mu \colon [a,b] \ra \cF \Lambda_{\nu}(\cH)$ is a $C^1$ path and $t_* \in [a,b]$ is a \emph{crossing time}, so $\mu(t_*) \cap \nu \neq \{0\}$. For each $t$ close to $t_*$ there exists a bounded operator $A_t \colon \mu(t_*) \ra \mu(t_*)$ such that $\mu(t)$ is the graph
\[
	\mu(t) = G_{\mu(t_*)}(A_t) = \{x + JA_t(x) : x \in \mu(t_*)\}.	
\]
The \emph{crossing form} is the symmetric, bilinear form defined by
\begin{align}
	Q(x,y) = \left. \frac{d}{dt} \omega \left(x, JA_t(y) \right)\right|_{t=t_*}
\end{align}
for all $x$ and $y$ in the finite-dimensional space $\mu(t_*) \cap \nu$. This is useful for the following reason.

\begin{prop} Let $\mu \in C^1([a,b], \cF \Lambda_{\nu}(\cH))$ and suppose $t_* \in [a,b]$ is a crossing time. Assume $Q$ is nondegenerate, with $p$ positive and $q$ negative eigenvalues. If $t_* \in (a,b)$ and $\delta > 0$ is sufficiently small, then
\[
	\Mas \left(\mu(t)|_{[t_*-\delta,t_*+\delta]}; \nu \right) = p-q.
\]
If $t_* = a$, then
\[
	\Mas \left(\mu(t)|_{[a,a+\delta]}; \nu \right) = -q,
\]
and if $t_* = b$, then
\[
	\Mas \left(\mu(t)|_{[b-\delta,b]}; \nu \right) = p.
\]
\end{prop}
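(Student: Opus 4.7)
The plan is to localize the computation near $t_*$ and extract the Maslov index directly from the definition in terms of the spectrum of $S(t) = (2P_{\mu(t)} - I)(2P_\nu - I)$ passing through $1 \in \bbS^1$. First I would observe that nondegeneracy of $Q$ forces $t_*$ to be an isolated crossing: using the graph representation $\mu(t) = G_{\mu(t_*)}(A_t)$ with $A_t\colon \mu(t_*) \to \mu(t_*)$ self-adjoint, $A_{t_*} = 0$, and $\dot A_{t_*}$ implementing the crossing form via $Q(x,y) = \omega(x, J\dot A_{t_*}y)$, nondegeneracy of $Q$ on $\mathcal{K} := \mu(t_*) \cap \nu$ implies $\mu(t) \cap \nu = \{0\}$ for $0 < |t - t_*| \ll 1$. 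After shrinking $\delta$ we may assume this holds on $[t_* - \delta, t_* + \delta] \setminus \{t_*\}$, so $1 \in \sigma(S(t))$ only at $t = t_*$ and only the finitely many eigenvalues bifurcating from $1$ can contribute to the Maslov index over this subinterval.

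The core step is to apply Kato analytic perturbation theory to the unitary family $S(t)$: the eigenvalues near $1$ organize into continuous branches $e^{i\theta_j(t)}$, and differentiating \eqref{projA} one obtains an explicit expression for $\dot P_{\mu(t_*)}$ in terms of $\dot A_{t_*}$. A direct computation of $\dot S(t_*)$, followed by projection onto the eigenspace $\ker(S(t_*) - I)$, shows that the reduced derivative $i\dot\theta(t_*)$ is, up to a positive multiplicative constant coming from the reflection $(2P_\nu - I)$, similar to the crossing form $Q$ acting on $\mathcal{K}$. Consequently the branches split according to the signature of $Q$: exactly $p$ of them rotate counterclockwise through $1$ as $t$ increases past $t_*$, and exactly $q$ rotate clockwise.

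Evaluating the Maslov sum from the excerpt on the partition $\{t_* - \delta, t_*, t_* + \delta\}$ with $\epsilon_j \in (0,\pi)$ chosen so that only these bifurcating eigenvalues lie in the arc $\{|\theta| \leq \epsilon_j\}$, the counterclockwise crossings contribute $+p$ and the clockwise ones contribute $-q$, giving $\Mas = p - q$ in the interior case. For $t_* = a$ the sum reduces to the terms detecting eigenvalues of $S(a + \delta)$ with $0 \leq \theta \leq \epsilon$ but not those of $S(a)$; only the $q$ clockwise branches populate that arc, yielding $-q$, and the symmetric calculation at $t_* = b$ gives $p$. The main obstacle is the perturbation-theoretic identification of the reduced derivative of $S(t)$ with a positive multiple of $Q$, including the careful tracking of signs between $\omega$, $J$, and the reflection $2P_\nu - I$; once that dictionary is established, the three displayed formulas follow by direct counting.
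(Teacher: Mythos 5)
This proposition is stated in the paper without proof: it is quoted as a standard fact about crossing forms, with the burden carried by the references (\cite{RS93}, \cite{BF98}, \cite{P96}, \cite{F04}). So there is no in-paper argument to compare against; the question is whether your outline would itself constitute a proof. Your strategy is the standard one and the counting at the end is essentially right (including the asymmetry between initial and terminal crossings, though your phrasing of the $t_*=a$ case is slightly off: the $q$ clockwise branches \emph{leave} the arc $\{0\le\theta\le\epsilon\}$, and the difference of counts is $p-(p+q)=-q$, not a count of branches that ``populate'' the arc).

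The genuine gap is the step you yourself flag as ``the main obstacle'': the identification of the reduced derivative of $S(t)$ on $\ker(S(t_*)-I)$ with a positive multiple of $Q$. That identification is not a routine computation to be deferred --- it is the entire content of the proposition, and it is where all the sign conventions ($\omega$ versus $J$ versus the order of the two reflections in $S(t)$) must be reconciled. As written, your argument assumes the conclusion at its crucial point. A second, fixable issue: you invoke ``Kato analytic perturbation theory,'' but the family $\mu(\cdot)$, hence $S(\cdot)$, is only $C^1$, so there are no analytic eigenvalue branches. The correct substitute is to take the Riesz projection onto the spectral arc of $S(t)$ near $1$ (which depends $C^1$ on $t$ once the crossing is isolated), reduce to a finite-dimensional $C^1$ family of unitaries, and use a first-order splitting lemma: nondegeneracy of the reduced derivative at $t_*$ controls the signs of the eigenvalue motion even without differentiable branches. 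With that repair and with the reduced-derivative computation actually carried out, your outline becomes the proof given in \cite{BF98}; without it, the proposal is a correct roadmap rather than a proof.
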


In other words, the local contribution to $\Mas(\mu(t);\nu)$ at $t_*$ is determined by the signature of the crossing form. When the crossing occurs at an endpoint of the curve, an initial crossing ($t_*=a$) can only contribute negatively to the Maslov index, while a terminal crossing ($t_*=b)$ can only contribute positively. If $\mu(t)$ is a negative path, in the sense that $Q$ is strictly negative at any crossing time, then
\[
	\Mas(\mu(t); \nu) = -\sum_{t\in[a,b)} \dim \left( \mu(t) \cap \nu \right),
\]
while for a positive curve one has
\[
	\Mas(\mu(t); \nu) = \sum_{t\in (a,b]} \dim \left( \mu(t) \cap \nu \right).
\]

\section{Smooth families of Dirichlet forms} \label{app:regular}
The form $D$ on $\Omega$ is said to be \emph{invertible} if, for any nonzero $u \in H^1(\Omega)$, there exists $v \in H^1(\Omega)$ with $D(u,v) \neq 0$.

\begin{prop} \label{Dform} 
Let
\[
	D_t(u,v) = \int_{\Omega} \left[ a^{ij}_t(\partial_i u)(\partial_j v) + b^i_t (\partial_i u)v + c^i_t u (\partial_i v) + d_t uv \right]
\]
be a one-parameter family of invertible, strongly elliptic Dirichlet forms, defined for $t$ in a compact interval $I$, with coefficients $a^{ij}_t, b^i_t, c^i_t,d_t \in C^k\left(I, L^{\infty}(\Omega) \right)$ for some $k \geq 0$.

Let $\{F_t\}$ be a one-parameter family of bounded linear functionals on $H^1(\Omega)$, contained in $C^k(I; H^1(\Omega)^*)$. Then for each $t \in I$, there exists a unique $u_t \in H^1(\Omega)$ such that $D_t(u_t,v) = F_t(v)$ for every $v \in H^1(\Omega)$. Moreover, the path $t \mapsto u_t$ is contained in $C^k(I, H^1(\Omega))$.
\end{prop}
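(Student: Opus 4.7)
The plan is to recast the weak problem as the operator equation $\mathcal{T}_t u_t = F_t$ in $\mathcal{L}(H^1(\Omega), H^1(\Omega)^*)$, where $\mathcal{T}_t u := D_t(u, \cdot)$, and then argue first that each $\mathcal{T}_t$ is invertible via Gårding plus Fredholm theory, and second that the inverse depends $C^k$-smoothly on $t$. Since the coefficients appear linearly in $\mathcal{T}_t$ and each bilinear pairing of the form $\int \alpha_t(\partial u)(\partial v)$ or $\int \alpha_t\, u v$ is bounded by $\|\alpha_t\|_{L^\infty}\,\|u\|_{H^1}\,\|v\|_{H^1}$, the assumed $C^k$ regularity of the coefficients in $L^\infty(\Omega)$ transfers directly to $C^k$ regularity of the map $t \mapsto \mathcal{T}_t$ in operator norm.

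To obtain bijectivity of each $\mathcal{T}_t$, I would use strong ellipticity together with the $L^\infty$ bounds on the lower-order coefficients to establish a Gårding inequality: there exist constants $c, C > 0$, which by compactness of $I$ and continuity of the coefficients into $L^\infty$ can be taken independent of $t \in I$, such that $D_t(u,u) + C \|u\|_{L^2(\Omega)}^2 \geq c \|u\|_{H^1(\Omega)}^2$ for all $u \in H^1(\Omega)$. Decomposing $\mathcal{T}_t = \mathcal{A}_t - C\, \iota^* \iota$, where $\mathcal{A}_t$ represents the coercive form $D_t + C \left< \cdot, \cdot \right>_{L^2(\Omega)}$ and $\iota \colon H^1(\Omega) \hookrightarrow L^2(\Omega)$ is the compact Rellich embedding, Lax--Milgram makes $\mathcal{A}_t$ an isomorphism and $C \iota^* \iota$ is compact. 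Hence $\mathcal{T}_t$ is Fredholm of index zero, and the hypothesis that $D_t$ is invertible provides injectivity and therefore bijectivity.

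For the $C^k$ dependence of $u_t = \mathcal{T}_t^{-1} F_t$, I would invoke the standard fact that the inversion map $A \mapsto A^{-1}$ is smooth on the open subset of invertible operators in $\mathcal{L}(H^1(\Omega), H^1(\Omega)^*)$, with derivative $B \mapsto -A^{-1} B A^{-1}$. Composing this smooth inversion with the $C^k$ family $t \mapsto \mathcal{T}_t$ and then evaluating on the $C^k$ family $t \mapsto F_t$ yields $u_t \in C^k(I, H^1(\Omega))$.

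I expect the main subtlety to lie in the uniformity of the Gårding constant $C$ across $t \in I$, since this is what legitimizes both pointwise invertibility and the global operator-norm regularity of $t \mapsto \mathcal{T}_t^{-1}$. The uniformity follows from a routine supremum argument, exploiting that the coefficients are (at least) continuous into $L^\infty(\Omega)$ and that $I$ is compact, but it is the one place where one must track constants explicitly before passing to the abstract Fredholm/smooth-inversion machinery.
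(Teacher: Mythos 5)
Your proposal is correct, but it takes a genuinely different route from the paper. The paper works entirely at the level of the weak formulation: it establishes the uniform coercivity bound, then proves the a priori estimate $\|u_t\|_{H^1(\Omega)} \leq A\|F_t\|_{H^1(\Omega)^*}$ (uniform in $t$) by a compactness-and-contradiction argument using Rellich and weak convergence, then obtains continuity of $t \mapsto u_t$ by writing $D_0(u_t - u_0, v) = (F_t - F_0)(v) - (D_t - D_0)(u_t, v)$ and applying the uniform estimate, and finally handles $k \geq 1$ by an explicit induction on difference quotients of the differentiated equation. You instead package the form as an operator $\mathcal{T}_t \in \mathcal{L}(H^1(\Omega), H^1(\Omega)^*)$, note that $t \mapsto \mathcal{T}_t$ inherits $C^k$ regularity in operator norm because the coefficients enter linearly and each pairing is controlled by the $L^\infty$ norm of its coefficient, prove bijectivity via G\aa rding plus the compact perturbation $\mathcal{T}_t = \mathcal{A}_t - C\iota^*\iota$ (index zero, and the paper's definition of ``invertible'' is precisely injectivity of $\mathcal{T}_t$), and then invoke smoothness of operator inversion on the open set of invertibles. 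Your route is shorter and avoids the induction entirely; in particular the uniform bound $\sup_{t}\|\mathcal{T}_t^{-1}\| < \infty$ comes for free from continuity of $t \mapsto \mathcal{T}_t^{-1}$ and compactness of $I$, so the uniformity of the G\aa rding constant is actually less load-bearing in your argument than you suggest (pointwise G\aa rding at each $t$ already suffices for pointwise Fredholmness). What the paper's hands-on approach buys is self-containedness and an explicit recursive formula for the derivatives $u_t^{(j)}$ in terms of $D_t^{(i)}$ and $F_t^{(i)}$, at the cost of a longer induction; both proofs are complete and rest on the same two pillars (Rellich compactness and the invertibility hypothesis).
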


Therefore, when the boundary-value problem for each $D_t$ is uniquely solvable, the path of solutions $u_t$ will be at least as smooth as the coefficients of $D_t$ and the inhomogeneous term $F_t$.

\begin{proof} From Theorem 4.7 of \cite{M00} (cf. Theorem 7.13 in \cite{F95}) we have that each $D_t$ is coercive, i.e. there exist constants $C_1 > 0$ and $C_2 \in \bbR$ such that
\begin{equation}
	\label{unifcoerc}
	|D_t(u,u)| \geq C_1 \|u\|^2_{H^1(\Omega)} - C_2 \|u\|^2_{L^2(\Omega)}
\end{equation}
for all $u \in H^1(\Omega)$. Since the interval $I$ is compact and the coefficients of $D$ are in $C(I, L^\infty(\Omega))$, we can choose $C_1$ and $C_2$ independent of $t$. The existence of $u_t$ follows from \eqref{unifcoerc} and the invertibility of $D_t$ (cf. Theorem 7.21 of \cite{F95}).

%positive constants $C_t$ and $\lambda_t$ such that
%\[
%	|D_t(u,u)| \geq C_t \|u\|^2_{H^1(\Omega)} - \lambda_t \|u\|^2_{L^2(\Omega)}
%\]
%for all $u,v \in H^1(\Omega)$. Moreover, it is immediate from the proof that $C_t$ and $\lambda_t$ only depend on the ellipticity constant of the form $D_t$ and the norms $\|b^i_t\|_{L^{\infty}(\Omega)}$, $\|c^i_t\|_{L^{\infty}(\Omega)}$, and $\|d_t\|_{L^{\infty}(\Omega)}$. Since the interval $I$ is compact there exist constants $C$ and $\lambda$ such that

We next claim that there exists $A > 0$ such that
\begin{align}
	\|u_t\|_{H^1(\Omega)} \leq A \|F_t\|_{H^1(\Omega)^*}
	\label{unifH1}
\end{align}
for any $t \in I$ and $F_t \in H^1(\Omega)^*$.
Assume this is not the case, so there exist sequences $\{t_i\}$ and $\{F_{t_i}\}$ with $\|u_{t_i}\|_{L^2(\Omega)} = 1$ and $\|u_{t_i} \|_{H^1(\Omega)} \geq i \|F_{t_i}\|_{H^1(\Omega)^*}$. The uniform coercivity bound \eqref{unifcoerc} implies $\{u_{t_i}\}$ is bounded in $H^1(\Omega)$, so there exists a subsequence with
\[
	t_i \rightarrow \bar{t}, \quad
	u_{t_i} \rightarrow \bar{u} \text{ in } L^2(\Omega) , \quad
	u_{t_i} \rightharpoonup \bar{u} \text{ in } H^1(\Omega)
\]
for some $\bar{u} \in H^1(\Omega)$ and $\bar{t} \in I$. We also have $F_{t_i} \rightarrow 0$ in $H^1(\Omega)^*$, hence $D_{\bar{t}}(\bar{u},v) = 0$ for all $v \in H^1(\Omega)$. The invertibility of $D_{\bar{t}}$ implies $\bar{u} = 0$, which is a contradiction because $\| \bar{u} \|_{L^2(\Omega)} = 1$, so the proof of \eqref{unifH1} is complete.

We are now ready to prove continuity of $t \mapsto u_t$ in $H^1(\Omega)$. It suffices to check at a single point, say $t=0$, which we can assume is contained in $I$ by performing a translation. From the definition of $u_t$ we obtain
\[
	D_0(u_t-u_0,v) = (F_t - F_0)(v) - (D_t - D_0)(u_t,v).
\]
Defining a functional $G_t \in H^1(\Omega)^*$ by $G_t(v) = (F_t - F_0)(v) - (D_t - D_0)(u_t,v)$, we have from \eqref{unifH1} that
\[
	\|u_t - u_0\|_{H^1(\Omega)} \leq A \|G_t\|_{H^1(\Omega)^*}.
\]
Since $\|u_t\|_{H^1(\Omega)}$ is uniformly bounded for $t$ close to zero (again using \eqref{unifH1}), the continuity of $D_t$ and $F_t$ implies $\|G_t\|_{H^1(\Omega)^*} \rightarrow 0$ as $t \rightarrow 0$. This completes the proof for $k=0$.

Now assume the result holds for some $k \geq 0$, and suppose that $D_t$ and $F_t$ are of class $C^{k+1}$. Differentiating the equation $D_t(u_t,v) = F_t(v)$ with respect to $t$, we obtain
\begin{align}
	D_t(u^{(k)}_t,v) = F_t^{(k)} - \sum_{j=0}^{k-1} \binom{k}{j} D_t^{(k-j)} \left( u_t^{(j)}, v \right)
	\label{utderiv}
\end{align}
For convenience we let $u^{(j)}_0$ denote the $j$th derivative of $u_t$ evaluated at $t=0$, and similarly for $D_t$. We claim that $u_0^{(k+1)}$ exists, and is equal to the unique function $w \in H^1(\Omega)$ that satisfies
\[
	D_0(w,v) = F^{(k+1)}_0(v) - D^{(1)}(u_0^{(k)},v)  - \sum_{j=0}^{k-1} \binom{k}{j} \left[ D_0^{(k+1-j)} \left( u_0^{(j)}, v \right) + D_0^{(k-j)} \left( u_0^{(j+1)}, v \right) \right]
\]
for all $v \in H^1(\Omega)$. Using \eqref{utderiv}, we find
\begin{align*}
	D_0\left(w - \frac{u_t^{(k)} - u_0^{(k)}}{t} , v\right) 
	= &\left( F_0^{(k+1)} - \frac{F_t^{(k)} - F_0^{(k)}}{t} \right)(v) \\
	+& \sum_{j=0}^k \left[ \frac{D_t^{(k-j)}(u_t^{(j)},v) - D_0^{(k-j)}(u_0^{(j)},v)}{t} \right.  - D_0^{(k+1-j)}(u_0^{(j)},v) - D_0^{(k-j)}(u_0^{(j+1)},v) \Bigg].
\end{align*}
Since $D_t$ and $F_t$ are of class $C^{k+1}$, the right-hand side defines a linear functional $H_t(v)$ with $\|H_t\|_{H^1(\Omega)^*} \rightarrow 0$ as $t \rightarrow 0$, and the uniform estimate \eqref{unifH1} yields
\[
	\lim_{t=0} \left\| w - \frac{u_t^{(k)} - u_0^{(k)}}{t} \right\|_{H^1(\Omega)} =0
\]
as was to be shown. The continuity of $u_t^{(k+1)}$ is proved in a similar fashion.
\end{proof}

The assumption of invertibility on an interval $I$ (as opposed to at a single point) is not unreasonable.

\begin{lemma} \label{openinv}
Let $\{D_t\}$ satisfy the regularity assumptions of Proposition \ref{Dform} with $k=0$. If $D_{t_0}$ is invertible for $t_0 \in I$, then $D_t$ is invertible for $|t-t_0| \ll 1$.
\end{lemma}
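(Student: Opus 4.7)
The plan is to reformulate invertibility as a statement about a bounded operator and then invoke openness of invertibility in the Banach algebra of bounded operators.

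First I would associate to each form $D_t$ the bounded operator $\mathcal{L}_t\colon H^1(\Omega)\to H^1(\Omega)^*$ defined by $\langle \mathcal{L}_t u, v\rangle = D_t(u,v)$. The definition of invertibility in the paper is precisely that $\mathcal{L}_t$ has trivial kernel. Next, I would verify that $t \mapsto \mathcal{L}_t$ is continuous in the operator norm: this is a direct estimate using the assumption that the coefficients $a_t^{ij}, b_t^i, c_t^i, d_t$ lie in $C(I, L^\infty(\Omega))$, so that
\[
	\|\mathcal{L}_t - \mathcal{L}_{t_0}\|_{H^1\to (H^1)^*} \leq C\bigl(\|a_t^{ij}-a_{t_0}^{ij}\|_\infty + \|b_t^i-b_{t_0}^i\|_\infty + \|c_t^i-c_{t_0}^i\|_\infty + \|d_t-d_{t_0}\|_\infty\bigr) \longrightarrow 0
\]
as $t \to t_0$.

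The crucial intermediate step is to upgrade injectivity of $\mathcal{L}_{t_0}$ to bijectivity, and for this I would show each $\mathcal{L}_t$ is Fredholm of index zero. By strong ellipticity, Gårding's inequality gives a constant $C_2 \in \bbR$ (which can be chosen uniformly in $t$) such that the perturbed form $\widetilde{D}_t(u,v) := D_t(u,v) + C_2 \langle u,v\rangle_{L^2}$ is coercive on $H^1(\Omega)$. By Lax--Milgram, its associated operator $\widetilde{\mathcal{L}}_t\colon H^1(\Omega)\to H^1(\Omega)^*$ is an isomorphism. Since the Rellich embedding $H^1(\Omega)\hookrightarrow L^2(\Omega)\hookrightarrow H^1(\Omega)^*$ is compact, $\mathcal{L}_t - \widetilde{\mathcal{L}}_t$ is compact, and hence $\mathcal{L}_t$ is Fredholm of index zero. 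In particular, injectivity is equivalent to bijectivity.

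Assembling these pieces: $\mathcal{L}_{t_0}$ is bijective, so it has a bounded inverse by the open mapping theorem. Write
\[
	\mathcal{L}_t = \mathcal{L}_{t_0}\bigl(I + \mathcal{L}_{t_0}^{-1}(\mathcal{L}_t - \mathcal{L}_{t_0})\bigr).
\]
For $|t-t_0|$ small enough that $\|\mathcal{L}_{t_0}^{-1}\|\,\|\mathcal{L}_t-\mathcal{L}_{t_0}\|<1$, the Neumann series inverts the bracket, so $\mathcal{L}_t$ is invertible and in particular injective, which is the required invertibility of $D_t$. The main (mild) obstacle is that the paper's notion of ``invertible'' is only a kernel condition, so one cannot directly apply the Neumann series argument to $\mathcal{L}_{t_0}$ without first passing through the Fredholm alternative; once this is done, the result follows from a standard perturbation argument.
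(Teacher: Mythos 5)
Your proof is correct, and it takes a genuinely different route from the paper's. The paper argues by contradiction with weak compactness: assuming $D_{t_i}$ fails to be invertible for some $t_i\to t_0$, it takes kernel elements normalized by $\|u_i\|_{L^2(\Omega)}=1$, bounds them in $H^1(\Omega)$ via the uniform coercivity estimate \eqref{unifcoerc}, extracts a subsequence converging weakly in $H^1(\Omega)$ and strongly in $L^2(\Omega)$ to some $\bar u$ with $\|\bar u\|_{L^2(\Omega)}=1$, passes to the limit in $D_{t_i}(u_i,v)=0$ to get $D_{t_0}(\bar u,v)=0$ for all $v$, and contradicts the invertibility of $D_{t_0}$. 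You instead work at the level of the operators $\mathcal{L}_t\colon H^1(\Omega)\to H^1(\Omega)^*$: the key extra step, which you correctly flag as necessary, is to upgrade the paper's kernel-only notion of invertibility to bounded invertibility of $\mathcal{L}_{t_0}$ via the Fredholm alternative ($\mathcal{L}_t$ is a compact perturbation of the Lax--Milgram isomorphism $\widetilde{\mathcal{L}}_t$ associated to the coercive form $D_t+C_2\left<\cdot,\cdot\right>_{L^2}$, using the Rellich embedding), after which norm continuity of $t\mapsto\mathcal{L}_t$ and the Neumann series finish the job. Both arguments ultimately rest on the compactness of $H^1(\Omega)\hookrightarrow L^2(\Omega)$, which is where the Lipschitz boundary assumption enters. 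Your route buys slightly more: it yields a uniform bound on $\|\mathcal{L}_t^{-1}\|$ for $t$ near $t_0$, which is essentially the a priori estimate \eqref{unifH1} that the paper establishes separately inside Proposition \ref{Dform} by yet another compactness argument. The paper's route is shorter given that \eqref{unifcoerc} is already in hand, and it reuses the same weak-convergence template that appears in Lemmas \ref{Carleman} and \ref{lem:Fredholm}.
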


\begin{proof} It suffices to consider $t_0 = 0$. Suppose the claimed result is false; then there exist numbers $t_i \rightarrow 0$ and functions $u_i \in H^1(\Omega)$ such that $D_{t_i}(u_i,v) = 0$ and $\| u_i\|_{L^2(\Omega)} = 1$. The uniform coercivity estimate \eqref{unifcoerc} implies $\{u_i\}$ is bounded in $H^1(\Omega)$, so there exists a subsequence $\{u_i\}$ and a function $\bar{u} \in H^1(\Omega)$ such that $u_i \rightharpoonup \bar{u}$ in $H^1(\Omega)$ and $u_i \rightarrow \bar{u}$ in $L^2(\Omega)$. It follows that $\|\bar{u}\|_{L^2(\Omega)} = 1$. Since the coefficients of $D_t$ are continuous, and weakly convergent subsequences are bounded, we have
\begin{align*}
	D_0(\bar{u},v) = \lim_{i \rightarrow \infty} D_{t_i}(u_i,v) = 0
\end{align*}
for each $v \in H^1(\Omega)$. The invertibility of $D_0$ yields $\bar{u} = 0$, which is not possible.
\end{proof}

% new!
To apply Proposition \ref{Dform} in practice we use the following elementary consequence of the chain rule.

\begin{lemma}\label{lemmaDtreg}
Let \[
	D(u,v) = \int_{\Omega} \left[ a^{ij}(\partial_i u)(\partial_j v) + b^i (\partial_i u)v + c^i u (\partial_i v) + d uv \right] 
\]
and define $D_t$ by \eqref{Dtdef}. If the coefficients $a^{ij}, b^i, c^i, d$ are contained in $C^k\left( \overline{\cup_{a \leq t \leq b} \Omega_t} \right)$
and the family $\{\varphi_t\}$ is of class $C^k$, then the coefficients of $D_t$ are contained in $C^k(I,L^\infty(\Omega))$.
\end{lemma}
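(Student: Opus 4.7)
The plan is to write $D_t(u,v)$ explicitly as an integral over $\Omega$ via the change of variables $y = \varphi_t(x)$, read off the coefficients of $D_t$ as algebraic combinations of $a^{ij}\circ\varphi_t$, $b^i\circ\varphi_t$, etc., together with $D\varphi_t$, its inverse, and $|\det D\varphi_t|$, and then check that each of these ingredients defines a $C^k$ curve in $L^\infty(\Omega)$. Products of $C^k$ curves in $L^\infty$ stay in $C^k(I,L^\infty(\Omega))$, so once the individual pieces are handled the conclusion follows.

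First I would carry out the change of variables. Writing $J_t(x) = D\varphi_t(x)$ and $M_t(x) = J_t(x)^{-1}$, the formulas
\[
	\nabla_y (u\circ\varphi_t^{-1})(\varphi_t(x)) = M_t(x)^T \nabla_x u(x), \qquad dy = |\det J_t(x)|\,dx
\]
show that each coefficient of $D_t$ has the schematic form $(\text{coefficient of } D)\circ\varphi_t$ multiplied by finitely many entries of $M_t$ and by $|\det J_t|$. Thus it suffices to prove:
\begin{enumerate}
\item[(i)] if $f \in C^k\!\left(\overline{\bigcup_t \Omega_t}\right)$, then $t \mapsto f\circ\varphi_t$ lies in $C^k(I,L^\infty(\Omega))$;
\item[(ii)] the maps $A\mapsto A^{-1}$ and $A\mapsto \det A$, composed with $t\mapsto J_t$, yield $C^k$ curves in $L^\infty(\Omega,\bbR^{n\times n})$ and $L^\infty(\Omega)$ respectively;
\item[(iii)] the pointwise product $L^\infty\times L^\infty \to L^\infty$ preserves $C^k$ regularity in $t$.
\end{enumerate}

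For (i), which I expect to be the main obstacle, I would argue by induction on $k$. For $k=0$, continuity of $t\mapsto f\circ\varphi_t$ in $L^\infty(\Omega)$ follows from uniform continuity of $f$ on the compact set $\overline{\bigcup_{t\in I}\Omega_t}$ together with $C^0$ continuity of $\varphi_t$ as a map $I\to L^\infty(\Omega,\bbR^n)$. For the inductive step, write the difference quotient
\[
	\frac{f(\varphi_{t+h}(x))-f(\varphi_t(x))}{h} = \nabla f(\xi_h(x))\cdot \frac{\varphi_{t+h}(x)-\varphi_t(x)}{h}
\]
via the mean value theorem, where $\xi_h(x)$ lies on the segment joining $\varphi_t(x)$ and $\varphi_{t+h}(x)$. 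Since $\varphi_{t+h}\to\varphi_t$ in $L^\infty$ the points $\xi_h$ converge uniformly to $\varphi_t$, and continuity of $\nabla f$ on the compact closure together with convergence of the difference quotient to $\partial_t\varphi_t$ in $L^\infty$ forces convergence of the right-hand side to $(\nabla f\circ\varphi_t)\cdot \partial_t\varphi_t$ in $L^\infty$. Iterating this argument (using the formula $\partial_t(f\circ\varphi_t)=(\nabla f\circ\varphi_t)\cdot \varphi_t'$ and applying the case-$(k-1)$ assertion to $\nabla f$ and to $\varphi_t'$) gives the $C^k$ statement.

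For (ii), matrix inversion and determinant are $C^\infty$ functions on $GL(n)$, and Rademacher's theorem together with the Lipschitz diffeomorphism hypothesis ensure that $\det J_t$ is bounded away from $0$ uniformly in $t$ (by compactness of $I$ and continuity of $t\mapsto J_t$ in $L^\infty$). Hence the $C^k$ dependence of $J_t$ on $t$ transfers to $M_t$ and $|\det J_t|$ by the same mean-value/chain-rule argument, applied now to smooth functions on the open set $GL(n)\subset \bbR^{n\times n}$ rather than to $f$ on $\bbR^n$. Assertion (iii) is immediate from the Leibniz rule, since $\|fg\|_{L^\infty}\le \|f\|_{L^\infty}\|g\|_{L^\infty}$. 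Combining (i)--(iii) with the explicit formulas for the coefficients of $D_t$ obtained from the change of variables completes the proof.
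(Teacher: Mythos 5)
Your proposal is correct and follows essentially the same route as the paper: change variables to read off the rescaled coefficients as products of $|{\det D\varphi_t}|$, entries of $(D\varphi_t)^{-1}$, and compositions $f\circ\varphi_t$, then verify $C^k$ dependence of each factor via the chain rule $\partial_t(f\circ\varphi_t)=(\nabla f\circ\varphi_t)\cdot\varphi_t'$. Your treatment is in fact somewhat more detailed than the paper's (which simply asserts the regularity of $|{\det D\varphi_t}|$ and $(D\varphi_t)^{-1}$ and sketches the $k=1$ case of the composition); no gaps.
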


\begin{proof}
Recalling that $D\varphi_t$ denotes the Jacobian matrix of $\varphi_t$, and $|D\varphi_t|$ its determinant, we have
\begin{align*}
	D_t(u,v) &= \int_{\Omega_t} \left[ a^{ij}(\partial_i u_t)(\partial_j v_t) + b^i (\partial_i u_t)v_t + c^i u_t (\partial_i v_t) + d u_t v_t \right]  \\
	&= \int_\Omega |D\varphi_t|  \left[ a^{ij}(\partial_i u_t)(\partial_j v_t) + b^i (\partial_i u_t)v_t + c^i u_t (\partial_i v_t) + d u_t v_t \right] \circ \varphi_t 
\end{align*}
where we have defined $u_t = u \circ \varphi_t^{-1}$ and similarly for $v_t$. Therefore, the rescaled coefficients on $\Omega$ are
\begin{align*}
	a^{ij}_t = |D\varphi_t| (a^{pq} \circ \varphi_t) (D\varphi_t)^{-1}_{ip} (D\varphi_t)^{-1}_{jq}, &\quad
	b^i_t = |D\varphi_t| (b^p \circ \varphi_t) (D\varphi_t)^{-1}_{ip}, \\
	c^i_t = |D\varphi_t| (c^p \circ \varphi_t) (D\varphi_t)^{-1}_{ip}, &\quad
	d_t = |D\varphi_t| (d \circ \varphi_t),
\end{align*}
where $(D\varphi_t)^{-1}_{ip}$ denotes the $i,p$ entry of the matrix $D\varphi_t$. The hypotheses imply that $|D\varphi_t|$ and $(D\varphi_t)^{-1}_{ip}$ are contained in $C^k(I,L^\infty(\Omega))$. Since $d$ is uniformly continuous on each $\Omega_t$ and $t \mapsto \varphi_t$ is continuous, the composition $d \circ \varphi_t$ defines a continuous map from $I$ into $L^\infty(\Omega)$, and similarly for $a^{pq} \circ \varphi_t$ etc. For $k=1$ we have that
\[
	\frac{d}{dt} (d \circ \varphi_t) = \frac{d\varphi_t}{dt} \cdot (\nabla d) \circ \varphi_t
\]
is a continuous map from $I$ to $L^\infty(\Omega)$. Higher derivatives are treated in a similar fashion.
\end{proof}

\bibliographystyle{plain}
\bibliography{maslov}

\end{document}